\numberwithin{equation}{section}
\numberwithin{figure}{section}
\theoremstyle{plain}
\newtheorem{thm}{\protect\theoremname}[section]
  \theoremstyle{remark}
  \newtheorem{rem}[thm]{\protect\remarkname}
  \theoremstyle{plain}
  \newtheorem{lem}[thm]{\protect\lemmaname}
  \theoremstyle{definition}
  \newtheorem{defn}[thm]{\protect\definitionname}
\makeatletter\newcommand{\Rmnum}[1]{\expandafter\@slowromancap\romannumeral#1@}\makeatother
\numberwithin{equation}{section}
\newcommand{\norm}[1]{\left\Vert#1\right\Vert}\newcommand{\abs}[1]{\left\vert#1\right\vert}\newcommand{\set}[1]{\left\{#1\right\}}
\newcommand{\defs}{:=}\newcommand{\sTo}{\rightarrow}
\newcommand{\me}{\mathrm{e}}\newcommand{\dif}{\mathrm{d}}
\DeclareSymbolFont{lettersA}{U}{pxmia}{m}{it}\DeclareMathSymbol{\piup}{\mathord}{lettersA}{"19}
\newcommand{\Real}{\mathbb R}
\newcommand{\mr}[1]{\mathrm{#1}}
\newcommand{\mca}{\mathcal{A}}
\newcommand{\mcc}{\mathcal{C}}
\newcommand{\C}{\mathcal{C}}
\newcommand{\mcf}{\mathcal{F}}
\newcommand{\mcg}{\mathcal{G}}
\newcommand{\mci}{\mathcal{I}}
\newcommand{\mcj}{\mathcal{J}}
\newcommand{\mco}{\mathcal{O}}
\newcommand{\mcp}{\mathcal{P}}
\newcommand{\fd}{\mathscr{D}}
\newcommand{\fk}{\mathscr{K}}
\newcommand{\ft}{\mathscr{T}}
\newcommand{\mbnu}{\boldsymbol\nu}
  \providecommand{\definitionname}{Definition}
  \providecommand{\lemmaname}{Lemma}
  \providecommand{\remarkname}{Remark}
\providecommand{\theoremname}{Theorem}
\begin{document}

\title[Admissible Locations of Shock Fronts in a Finite Nozzle]{On Admissible Locations of Transonic Shock Fronts for Steady Euler
Flows in an Almost Flat Finite Nozzle with Prescribed Receiver Pressure}

\author{Beixiang Fang}

\author{Zhouping Xin}

\address{B.X. Fang: School of Mathematical Sciences, and MOE-LSC, Shanghai Jiao Tong University, Shanghai 200240, China }

\email{\texttt{bxfang@sjtu.edu.cn}}

\address{Z.P. Xin: The Institute of Mathematical Sciences, The Chinese University
of Hong Kong, Shatin, N.T., Hong Kong}

\email{\texttt{zpxin@ims.cuhk.edu.hk}}

\keywords{2-D; steady Euler system; transonic shocks; nozzles; receiver pressure; existence; non-uniqueness;}
\subjclass[2010]{35A01, 35A02, 35B20, 35B35, 35B65, 35J56, 35L65, 35L67, 35M30, 35M32, 35Q31, 35R35, 76L05, 76N10}

\date{\today}
\begin{abstract}
This paper concerns the existence of transonic shock solutions to the 2-D steady compressible Euler system in an almost flat finite nozzle ( in the sense that it is a generic small perturbation of a flat one ), under physical boundary conditions proposed by Courant-Friedrichs in \cite{CourantFriedrichs1948}, in which the receiver pressure is prescribed at the exit of the nozzle. In the resulting free boundary problem, the location of the shock-front is one of the most desirable information one would like to determine. 
However, the location of the normal shock-front in a flat nozzle can be anywhere in the nozzle so that it provides little information on the possible location of the shock-front when the nozzle's boundary is perturbed. So one of the key difficulties in looking for transonic shock solutions is to determine the shock-front. 
To this end, a free boundary problem for the linearized Euler system will be proposed, whose solution will be taken as an initial approximation for the transonic shock solution. Once an initial approximation is obtained, a further nonlinear iteration could be constructed and proved to lead to a transonic shock solution.
In this paper, a sufficient condition in terms of the geometry of the nozzle and the given exit pressure is derived which yields the existence of the solutions to the proposed free boundary problem.
By this condition, it will be shown that, for the proposed linear free boundary problem, if the nozzle is either strictly expanding or contracting, there exists only one solution as long as the given pressure at the exit lies between certain interval; while if the nozzle is generic with both expanding and contracting portions, there may exist more than one solutions for the same given receiver pressure, which implies the existence of more than one initial approximating shock-fronts.
This non-uniqueness of the initial approximations will lead to the non-uniqueness of the transonic shock solutions for generic nozzles with the same given receiver pressure, which shows the instability of the unperturbed normal shock solution under general perturbations of the nozzle boundary and how this instability may behave.

\end{abstract}

\maketitle
\tableofcontents{}

\section{Introduction}

This paper concerns with the 2-D steady compressible Euler
flows in a finite nozzle with variable sections. The main goal is to find piece-wise smooth weak
solutions with a single shock-front, which are called shock solutions,
to the steady Euler system for an almost flat finite nozzle under
physical boundary conditions proposed by Courant-Friedrichs in \cite{CourantFriedrichs1948}
(see Figure \ref{fig:Nozzle_waved}):
\begin{enumerate}
\item The fluid enters the nozzle with a supersonic state $\overline{U}_{-}$;
\item At the exit of the nozzle, the fluid reaches a given pressure $P_{e}$,
called the receiver pressure, and its value is relatively high so
that the flow is subsonic; 
\item The fluid cannot penetrate the nozzle walls.
\end{enumerate}
Mathematically, this problem can be formulated as a free boundary
problem for the 2-D steady Euler system. The location of the shock-front,
which is the very free boundary, is one of the most
desirable information to be determined. 
\begin{figure}[th]
\centering
\def\svgwidth{200pt}
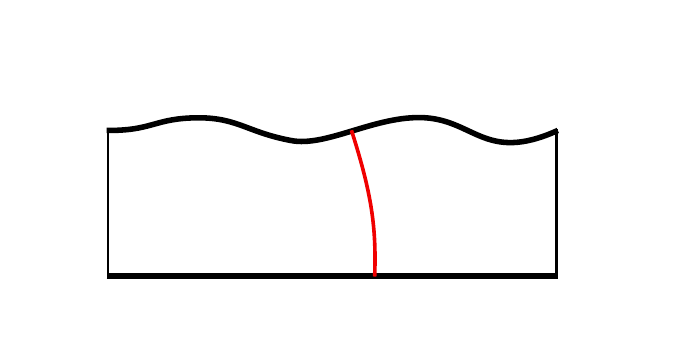

\caption{The steady Euler flows with a single shock-front in an almost flat
nozzle.\label{fig:Nozzle_waved}}
\end{figure}
In this paper, our goal is to find
the admissible locations of the shock-front together with the flow
fields both ahead of and behind it satisfying the above-mentioned boundary conditions for an almost flat nozzle in the
sense that it is a generic perturbation of a flat one. One of the key difficulties in solving this problem is to determine the location of the transonic shock by the geometry of the nozzle and the exit boundary condition.
It should be noted that the boundary condition of given receiver pressure at the exit proposed by Courant-Friedrichs in \cite{CourantFriedrichs1948} is a physically important one, and it is relatively easy to solve the transonic shock problem if this condition is modified \cite{ChenChenSong2006JDE,DuanWeng2011JDE}.
There has been some substantial progress in studying such transonic shock problem. Indeed, in
\begin{figure}[th]
	\centering
	\def\svgwidth{350pt}
	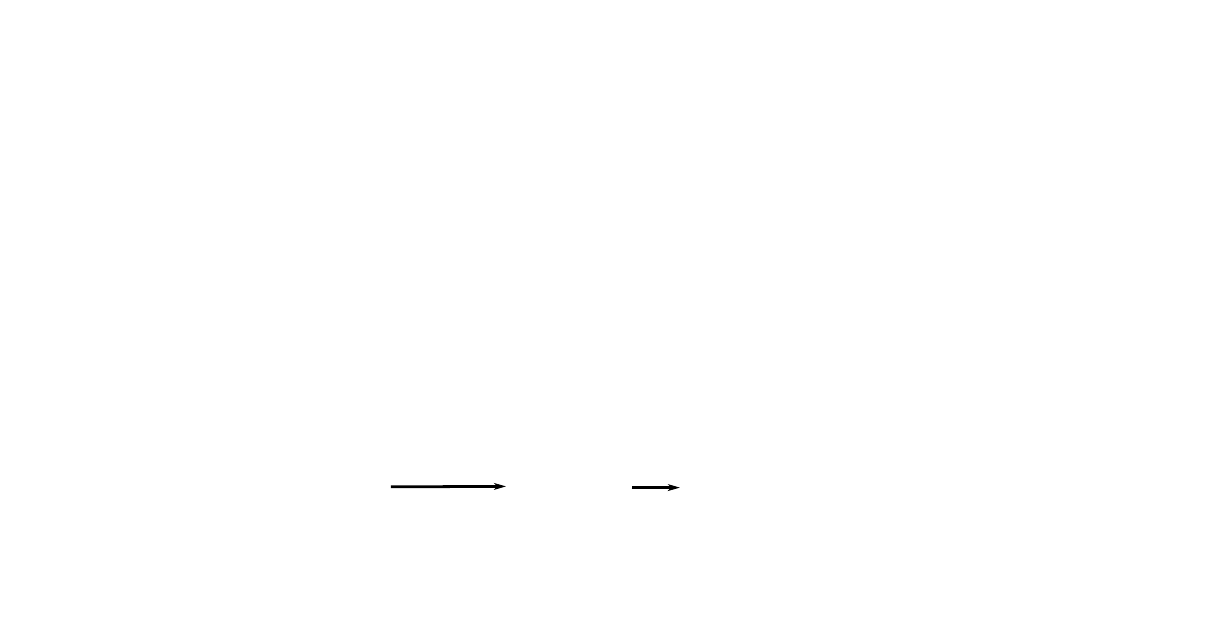
	
	\caption{Infinite admissible normal shock-fronts in a flat nozzle.\label{fig:normal_shocks}}
\end{figure}
case the nozzle is an expanding angular sector or a diverging cone, under the assumption
that the flow parameters only depend on the radius, Courant-Friedrichs
established in \cite{CourantFriedrichs1948} the unique existence
of the transonic shock solutions if the value of the receiver pressure
lies in a certain interval. 
Moreover, Courant-Friedrichs' transonic shock solutions have been shown to be structurally stable for generic small perturbations of both the data and the geometry of the nozzle by Li-Xin-Yin recently in \cite{LiXinYin2009CMP,LiXinYin2009MRL,LiXinYin2013ARMA}
And this structural stability theory of Li-Xin-Yin in \cite{LiXinYin2009CMP,LiXinYin2009MRL,LiXinYin2013ARMA} further confirms the well-posedness of the transonic shock problem formulated in this way.
However, for flat nozzles, it is well-known
that, for a given uniform supersonic state $\overline{U}_{-}$ and a suitable constant receiver pressure, there exists
a unique uniform subsonic state $\overline{U}_{+}$, which connects
with $\overline{U}_{-}$ via a normal transonic shock, while the
location of the shock front can be arbitrary in the nozzle ( see Figure
\ref{fig:normal_shocks}). This fact indicates that the location of
the shock-front cannot be uniquely determined by the pressure at the
exit and makes it difficult to treat the transonic shock problem in an almost flat nozzle as a small perturbation of an unperturbed flat one. 
Thus, a natural question is that, for a general nozzle,
is it possible to determine the location of the shock-front with
the given receiver pressure? 
In this paper, for an almost flat nozzle, we find a natural sufficient condition in terms of the geometry of the nozzle walls and the receiver pressure so that the existence of transonic
shock solutions can be established.
It will be shown that there may exist
more than one shock solutions for the same receiver pressure, depending
on the geometry of the nozzle walls. Indeed, if the nozzle is strictly
expanding or contracting, only one shock solution has been established
( see Figure \ref{fig:Nozzle_Div}) via the arguments in this paper.
However, for a general nozzle with both expanding and contracting
portions, more than one shock solutions could be established for the
same given receiver pressure at the exit (see Figure \ref{fig:Nozzle_Wav}).
\begin{figure}[th]
\centering
\def\svgwidth{200pt}
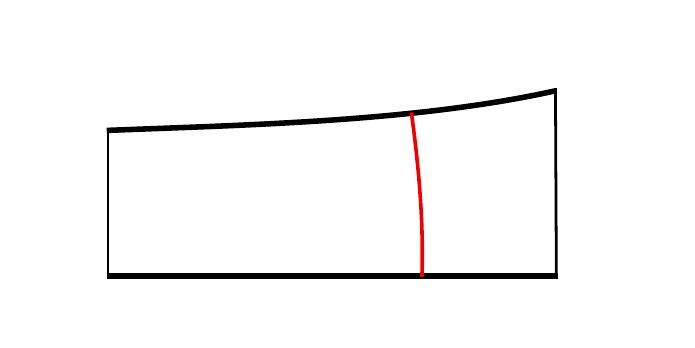
\def\svgwidth{200pt}
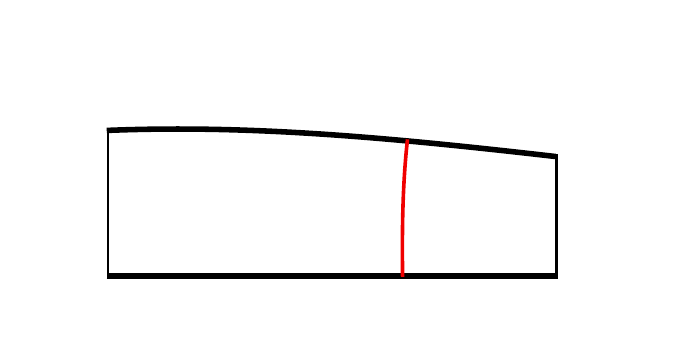

\caption{Existence of the shock solutions in a strictly expanding/contracting
nozzle.\label{fig:Nozzle_Div}}
\end{figure}
It should be remarked that the phenomenon observed here for 2-D steady
Euler system is similar to the one observed by Liu in \cite{Liu1982ARMA,Liu1982CMP}
and by Embid-Goodman-Majda in \cite{EmbidGoodmanMajda1984} for the
gas flows in a nozzle of variable areas governed by a quasi-one-dimensional
model.

\begin{figure}[th]
\centering
\def\svgwidth{400pt}
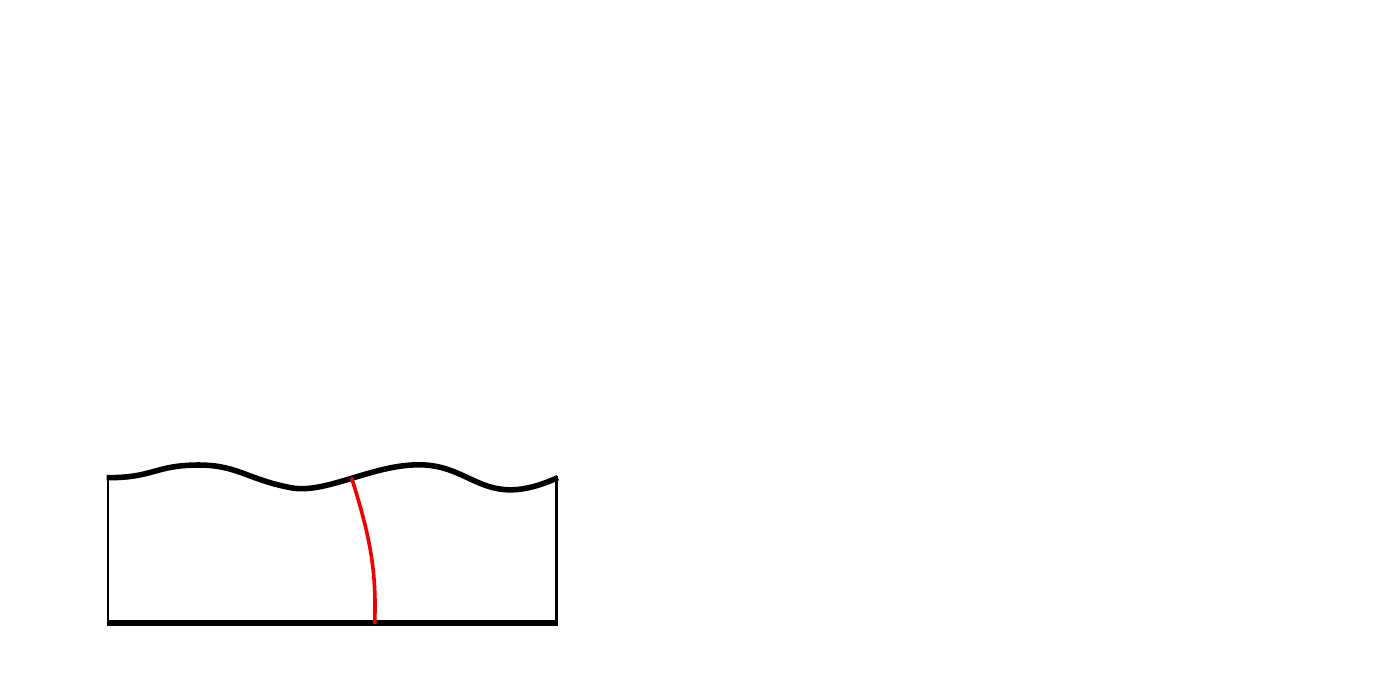

\caption{Existence of multiple transonic shock solutions in a nozzle with both
expanding and contracting portions for the same given receiver pressure.\label{fig:Nozzle_Wav}}
\end{figure}

The transonic shock problem formulated here is a free boundary value problem for mixed type equations. One of the key difficulties in establishing the existence of a solution to this problem is to determine the location of the free boundary( shock-front ). Indeed, in the theory of structural stability of the transonic shocks found in \cite{CourantFriedrichs1948}, the well-posedness of the transonic shock problem established by Li-Xin-Yin in  \cite{LiXinYin2009CMP,LiXinYin2009MRL,LiXinYin2013ARMA} depends crucially on the fact that the location of the background transonic shock is uniquely determined which serves the leading order approximation of the perturbed shock front. Clearly, this key argument cannot be adapted directly to the present case since the location of the background shock front can be arbitrary. Previous attempt going around this difficulty is pre-assuming artifically that the shock front goes through a fixed point on the wall of the nozzle as in \cite{ChenChenSong2006JDE,Chen_S2005TAMS,XinYanYin2011ARMA,XinYin2005CPAM,XinYin2008PJM}. However, this would make the transonic shock problem as formulated above ill-posed unless the boundary condition that the receiver pressure given at the exit of the nozzle is modified as in \cite{ChenChenSong2006JDE,Chen_S2005TAMS,XinYanYin2011ARMA,XinYin2005CPAM,XinYin2008PJM}.
In this paper, we propose an elaborate scheme to overcome this difficult and establish the existence of solutions to the transonic shock problem.
The key step in this scheme is to determine an approximating position of the
shock-front directly from the prescribed boundary data by designing
a free boundary problem for the linearized Euler system based on the
unperturbed normal shock solution $\left(\overline{U}_{+},\overline{U}_{-}\right)$,
in which the free boundary will be taken as the initial approximation of the perturbed shock front
( see Section 2 for detail formulation ). Since, as observed, for
instance, in \cite{Chen_S2005TAMS,LiXinYin2013ARMA,Serre1995ARMA},
the Euler system for subsonic flows is elliptic-hyperbolic composite
and it can be decomposed into an elliptic sub-system for the pressure
and the flow angle as well as two transport equations, a solvability
condition for the boundary data should be satisfied in order that
there exists a solution to the sub-problem of the elliptic sub-system,
as in \cite{LiXinYin2009CMP,LiXinYin2009MRL,LiXinYin2013ARMA}. It
turns out that, in the designed linear free boundary problem, the location
of the free boundary can be determined by this very solvability condition
for the linearized Euler system at the subsonic state $\overline{U}_{+}$.
Fortunately, it will be shown that the solvability condition can be formulated explicitly
as an equation solving the parameter representing the location of the
free boundary, the data on the boundary
geometry of the nozzle, the supersonic state at the entrance, and the receiver pressure at the exit, such that its solutions can
be analyzed clearly. In particular, as shown in Section
3, for either expanding or contracting nozzles, this
solvability condition yields a unique approximating location of the
shock-front; while for a general nozzle with both expanding
and contracting portions, there could exist more than one solutions
to the equation of the solvability condition for the same given receiver
pressure, and each one can be taken as an initial approximation of
the perturbed shock-front. Once this initial approximation is obtained, we can design a nonlinear iteration, similar as
in \cite{LiXinYin2009CMP,LiXinYin2009MRL,LiXinYin2013ARMA}, which
yields a transonic shock solution with its shock-front close
to the initial approximation, as long as the initial approximation
lies at the expanding or contracting portion and the perturbation
is suitably small. Thus, for a general nozzle, there may exist more
than one admissible locations of the shock-front satisfying both Rankine-Hugoniot
conditions and entropy conditions with the same prescribed receiver
pressure (see Figure \ref{fig:Nozzle_Wav}). This non-uniqueness of the shock
solutions also shows the instability of the unperturbed normal
shock solution under small perturbation of the nozzle and how this
instability may behave.

It is well-known that the study of gas flows in nozzles plays a fundamental
role in the operation of turbines, wind tunnels and rockets. In \cite{CourantFriedrichs1948},
Courant and Friedrichs first gave a systematic analysis mathematically
via nonlinear partial differential equations for various types of
steady inviscid flows in nozzles. The steady inviscid flow involving
a single shock-front, which enters the nozzle with a supersonic state
and leave with a subsonic one at the exit, is one of the typical flow
patterns they analyzed. They pointed out that the position of the
shock front cannot be uniquely determined unless additional conditions
are imposed at the exit and the pressure condition is suggested and
preferred (see \cite[Page 373-374]{CourantFriedrichs1948}). In particular, they established the unique existence of the transonic shock
solution in an expanding angular sector or a diverging cone with given constant pressure at the
exit and asked whether the similar patterns hold for general de Laval nozzles. However, such a theory may not be true for arbitrary nozzles.
It turns out that this is a challenging problem and the existence of piecewise smooth solutions separated by a single transonic shock in a nozzle depends sensitively on the conditions imposed at the exit of the nozzle.
Up to now, plenty of efforts for various models
and different boundary conditions at the exit of the nozzle have been made in order to establish
the rigorous mathematical analysis to a transonic shock in a nozzle. In 1980s, T.P.
Liu studied the unsteady transonic gas flows governed by quasi-one-dimensional
models in an infinite nozzle of varying area. In \cite{Liu1982ARMA,Liu1982CMP},
for certain given Cauchy data, he proved the existence of shocks in
the nozzle and showed that a transonic shock-front occured in the expanding
portion is dynamically stable, while the one in the contracting portion
is unstable. In \cite{EmbidGoodmanMajda1984}, Embid-Goodman-Majda
showed that there may exist multiple shock solutions for steady quasi-one-dimensional
flows in a nozzle with both expanding and contracting portions. See
also, for instance, \cite{ChenGengZhang2009SIAMJMA,GlazLiu1984AAM,RauchXieXin2013JMPA}
and references therein for literatures on quasi-one-dimensional nozzle
flows. As to the steady multi-dimensional models, thanks to continuous
efforts of many mathematicians, there have been substantial progresses\textcolor{blue}{{}
}in the past two decades, for instance, see \cite{BaeFeldman2011ARMA,ChenChenFeldman2007JMPA,ChenChenSong2006JDE,ChenFeldman2003JAMS,ChenFeldman2004CPAM,ChenFeldman2007ARMA,Chen_S2005TAMS,Chen_S2008TAMS,Chen_S2009CMP,ChenYuan2008ARMA,ChenYuan2013CPAA,CuiYin2008JPDE,DuanWeng2011JDE,FangLiuYuan2013ARMA,LiuXuYuan2016AdM,LiuYuan2008JHDE,LiuYuan2009SIAMJMA,LiXinYin2009CMP,LiXinYin2009MRL,LiXinYin2010JDE,LiXinYin2010PJM,LiXinYin2011PJM,LiXinYin2013ARMA,XieWang2007JDE,XinYanYin2011ARMA,XinYin2005CPAM,XinYin2008JDE,XinYin2008PJM,Yuan2008NARWA,Yuan2012NA}.
Mathematical theories for transonic shocks in a nozzle with various conditions
at the exit have been established. In particular, Chen-Feldman proved
in \cite{ChenFeldman2003JAMS} the existence of transonic shock solutions
in a finite nozzle for multi-dimensional potential flows with given
potential value at the exit. It is surprising to know that the potential
equation fails to give the unique transonic shock solution as shown
in \cite{XinYin2008PJM}. It is observed in \cite{Serre1995ARMA}
by D. Serre that the steady compressible Euler system for subsonic flows is elliptic-hyperbolic
composite and could be decomposed into an elliptic sub-system of first
order for the pressure and the flow angle, as well as transport equations.
Such a decomposition has been a major tool to analyze the transonic shock in a nozzle for steady Euler flows in \cite{Chen_S2005TAMS,LiXinYin2009CMP,LiXinYin2009MRL,LiXinYin2013ARMA}, which will also be used in this paper.
We will use the Lagrange transformation
to straighten the streamlines which was also first introduced in \cite{Chen_S2005TAMS}
and employed later in many literatures. It should be noted that, in
a series of papers \cite{LiXinYin2009CMP,LiXinYin2009MRL,LiXinYin2013ARMA},
with prescribed pressure at the exit, as conjectured by Courant and
Friedrichs, Li-Xin-Yin established the well-posedness of the shock solutions
for the steady Euler flows in a diverging nozzle, which is a generic perturbation of an expanding angular sector, which, in particular, yields the structual stability of the transonic shock solutions by Courant and
Friedrichs in \cite{CourantFriedrichs1948}.  
See also \cite{ChenYuan2013CPAA,CuiYin2008JPDE,FangLiuYuan2013ARMA,LiuYuan2009SIAMJMA,XinYanYin2011ARMA}
for studies on the uniqueness of the shock solution in general nozzles. Based on the theories established and techniques developed in
these literatures, in this paper, the steady 2-D Euler flows with
a single shock-front in an almost flat nozzle with prescribed receiver
pressure at the exit will be investigated and the admissible locations
of the shock-front will be determined under some natural sufficient conditions. 

\medskip{}

The paper is organized as follows. In Section 2, we formulate the
problem as a free boundary problem for the steady 2-D Euler system, which is then
reformulated via the Lagrange transformation. Then a free
boundary problem of the linearized Euler system based on the background
normal shock solution $\left(\overline{U}_{+},\overline{U}_{-}\right)$,
in which the free boundary will be taken as the initial approximation
for the shock-front, will be derived. The main results will also
be presented. In Section 3, we solve the proposed linear free boundary problem
and determine the location of the free boundary via the solvability
condition for the sub-problem of the elliptic sub-system of the linearized
Euler system at $\overline{U}_{+}$. With the initial approximation,
in Section 4 and 5, we can construct the nonlinear iteration scheme and
prove its convergence, which demonstrates the existence of a transonic shock-front
close to the initial approximation. Finally, in the appendix, we present the well-posed theory for the mixed boundary value problem of linear elliptic systems of first order with constant coefficients, which will be employed in the anaylysis for the elliptic sub-system of the Euler system for subsonic flows.

\section{Formulations and Main Results}

In this section, we first formulate the existence problem of the transonic
shock in an almost flat nozzle as a free boundary problem for the
steady 2-D Euler system, and then reformulate the problem by the Lagrange
transformation, and finally state the main results in this paper.

\subsection{The formulation of the free boundary problem}

The motion of the compressible fluid in the nozzle is governed by the full steady
2-D Euler system
\begin{align}
 & \partial_{x}\left(\rho u\right)+\partial_{y}\left(\rho v\right)=0,\label{eq:full-Euler-mass}\\
 & \partial_{x}\left(\rho u^{2}+p\right)+\partial_{y}\left(\rho uv\right)=0,\label{eq:full-Euler-momentum-x}\\
 & \partial_{x}\left(\rho uv\right)+\partial_{y}\left(\rho v^{2}+p\right)=0,\label{eq:full-Euler-momentum-y}\\
 & \partial_{x}\left(\rho u\Phi\right)+\partial_{y}\left(\rho v\Phi\right)=0,\label{eq:full-Euler-energy}
\end{align}
where $\left(u,\ v\right)^{\top}$ is the velocity field, $\left(p,\ S\ ,\rho\right)$
represents the pressure, the density and the entropy, of which only
two are independent and the equation of state is $\rho=\rho\left(p,S\right)$,
and $\Phi:=\frac{1}{2}\left(u^{2}+v^{2}\right)+i$ with $i:=e+p/\rho$
the enthalpy and $e$ the internal energy. In particular, for polytropic
gases, $\rho=A\left(S\right)p^{1/\gamma}$ with $\gamma$ the adiabatic
exponent, or specifically,
\begin{equation}
p=\left(\gamma-1\right)\me^{\left(S-S_{0}\right)/c_{\mr{v}}}\rho^{\gamma},\label{eq:polytropic}
\end{equation}
where $S_{0}$ is some constant, and $c_{\mr{v}}$ is the specific
heat at constant volume. Let $U=\left(p,\theta,q,S\right)^{\top}$represent
the state of the fluid, where $\theta=\arctan{\displaystyle \frac{v}{u}}$
and $q=\sqrt{u^{2}+v^{2}}$.

Across a shock-front, the Rankine-Hugoniot conditions are:
\begin{align}
 & \left[\rho u\right]-\varphi'\left[\rho v\right]=0,\label{eq:RH_Euler_1}\\
 & \left[\rho u^{2}+p\right]-\varphi'\left[\rho uv\right]=0,\label{eq:RH_Euler_2}\\
 & \left[\rho uv\right]-\varphi'\left[\rho v^{2}+p\right]=0,\label{eq:RH_Euler_3}\\
 & \left[\Phi\right]=0,\label{eq:RH_Euler_4}
\end{align}
where $x=\varphi\left(y\right)$ is the location of the shock-front,
$\varphi'\defs\displaystyle\frac{\dif}{\dif y}\varphi$, and $\left[\cdot\right]$
stands for the jump of the corresponding quantity across the shock-front.
Moreover, the entropy condition is that $[p]=p_{+}-p_{-}>0$.

Let 
\[
\fd=\set{\left(x,y\right)^{\top}\in\Real^{2}:\ 0<x<L,\ 0<y<\varphi_{\mr{w}}\left(x\right)}
\]
be the nozzle with length $ L $, lower wall $ W_1 $, upper wall $ W_2 $, entrance $ E_{1} $, and exit $ E_2 $. ( See Figure \ref{fig:Nozzle_waved}). 
Here for simplicity of the computation and the representation,
the lower wall of the nozzle is assumed to be flat. 

For a flat nozzle, one may assume $\varphi_{\mr{w}}\left(x\right)=\overline{\varphi}_{\mr{w}}\left(x\right)\equiv1$.
Then for a given uniform supersonic state $\overline{U}_{-}=\left(\overline{p}_{-},0,\overline{q}_{-},\overline{S}_{-}\right)^{\top}$,
there exists a unique subsonic state $\overline{U}_{+}=\left(\overline{p}_{+},0,\overline{q}_{+},\overline{S}_{+}\right)^{\top}$
such that
\begin{align}
 & \left[\overline{\rho}\cdot\overline{q}\right]=0,\label{eq:RH_bg_mass}\\
 & \left[\overline{\rho}\cdot\overline{q}^{2}+\overline{p}\right]=0,\label{eq:RH_bg_momentum}\\
 & \left[\ \overline{\Phi}\ \right]=0,\label{eq:RH_bg_Bernoulli}
\end{align}
which can be connected to $\overline{U}_{-}$ through a normal transonic
shock-front $x=\overline{x}_{\mr{s}}$, and $\overline{x}_{\mr{s}}$
can be any value in the interval $\left(0,\ L\right)$. See Figure
\ref{fig:normal_shocks}.

Without loss of generality, by \eqref{eq:RH_bg_mass}, one may assume
\begin{equation}
\overline{\rho}_{-}\overline{q}_{-}=\overline{\rho}_{+}\overline{q}_{+}=1.\label{eq:total_mass}
\end{equation}

In this paper, we are trying to solve the transonic shock problem with  a given receiver pressure at the exit for almost straight nozzles. This problem can be formulated as a free boundary problem as follows.

\medskip{}

\textbf{The Free Boundary Problem $\llbracket NS\rrbracket$. }

Let $P\in\mcc^{2+\alpha}(\overline{\Real_{+}})$ and $\Theta\in\mcc^{2+\alpha}[0,L]$
be given for some $0<\alpha<1$, satisfying
\begin{align}
 & \norm{P}_{\mcc^{2+\alpha}(\overline{\Real_{+}})}<+\infty,\qquad\norm{\Theta}_{\mcc^{2+\alpha}[0,L]}=1,\label{eq:norm_profiles}\\
 & \Theta(0)=\Theta'(0)=\Theta''(0)=0.\label{eq:upper_bdry_compatibility_2nd}
\end{align}
Assume that
\begin{eqnarray}
P_{e}\left(y\right) & \defs & \overline{p}_{+}+\sigma P\left(y\right),\label{eq:receiver_pressure}\\
\varphi_{\mr{w}}(x) & \defs & 1+\int_{0}^{x}\tan\left(\sigma\Theta(s)\right)\dif s,\label{eq:upper_wall}
\end{eqnarray}
where $\sigma>0$ is a sufficiently small constant. Then look for
a shock solution $\left(U_{-};U_{+};\varphi_{\mr{s}}\right)$ to the
Euler system \eqref{eq:full-Euler-mass}—\eqref{eq:full-Euler-energy}
in $\fd$ (see Figure \ref{fig:Nozzle_waved}), such that:
\begin{enumerate}
\item The location of the shock front is given by
\[
F_{\mr{s}}=\set{\left(x,y\right)^{\top}\in\Real^{2}:\ x=\varphi_{\mr{s}}\left(y\right),\ 0<y<Y_{\mr{s}}},
\]
where $(Y_{\mr{s}},\ \varphi_{\mr{s}}(Y_{\mr{s}}))$ is the intersection of the shock-front with the upper wall.
\item $U_{-}(x,y)$ satisfies the Euler system \eqref{eq:full-Euler-mass}—\eqref{eq:full-Euler-energy}
in the domain $\fd_{-}$ ahead of the shock-front
\[
\fd_{-}=\set{\left(x,y\right)^{\top}\in\Real^{2}:\ 0<x<\varphi_{\mr{s}}\left(y\right),\ 0<y<\varphi_{\mr{w}}\left(x\right)},
\]
coincides with the uniform supersonic state $\overline{U}_{-}$
at the entrance $E_{1}$ of the nozzle:
\begin{equation}
U_{-}=\overline{U}_{-},\text{ on }E_{1},\label{eq:entrance_cond}
\end{equation}
and satisfies the slip boundary condition on the walls of the nozzle:
\begin{align}
 & v_{-}=0, & \text{ on } & W_{1},\label{eq:nozzle_lower_cond_super}\\
 & v_{-}-\varphi_{\mr{w}}'\cdot u_{-}=0, & \text{ on } & W_{2}.\label{eq:nozzle_upper_cond_super}
\end{align}
\item $U_{+}(x,y)$ satisfies the Euler system \eqref{eq:full-Euler-mass}—\eqref{eq:full-Euler-energy}
in the domain $\fd_{+}$ behind the shock-front
\begin{eqnarray*}
\fd_{+} & = & \set{\left(x,y\right)^{\top}\in\Real^{2}:\ \varphi_{\mr{s}}\left(y\right)<x<L,\ 0<y<\varphi_{\mr{w}}\left(x\right)},
\end{eqnarray*}
the slip boundary condition on the walls of the nozzle:
\begin{align}
 & v_{+}=0, & \text{ on } & W_{1},\label{eq:nozzle_lower_cond_sub}\\
 & v_{+}-\varphi_{\mr{w}}'\cdot u_{+}=0, & \text{ on } & W_{2},\label{eq:nozzle_upper_cond_sub}
\end{align}
and the receiver pressure at the exit:
\begin{equation}
p_{+}=P_{e}\left(y\right),\text{ on }E_{2}.\label{eq:receiver_pressure_cond}
\end{equation}
\item Finally, on the shock front $F_{\mr{s}}$, $\left(U_{-};U_{+};\varphi_{\mr{s}}'\right)$
satisfies the R-H conditions \eqref{eq:RH_Euler_1}---\eqref{eq:RH_Euler_4}.
\end{enumerate}

\subsection{Reformulation via the Lagrange transformation}

For steady flows, the streamlines coincide with the characteristics
corresponding to the linearly degenerate eigenvalue of the Euler system.
And it is useful to employ the Lagrange transformation as in
\cite{Chen_S2005TAMS,LiXinYin2009CMP,LiXinYin2013ARMA}  to straighten the streamlines, which turns
out to be crucial in the regularity of solutions in the subsonic
region. Below we describe formally the Lagrange transformation for
convenience of the reader, and one is referred to \cite{Chen_S2005TAMS,LiXinYin2009CMP,LiXinYin2013ARMA}
for more rigorous derivations.

The Lagrange transformation is defined as:
\[
\begin{cases}
\xi=x,\\
\eta=\int_{\left(0,0\right)}^{\left(x,y\right)}\rho u(s,t)\dif t-\rho v(s,t)\dif s.
\end{cases}
\]
Then the Euler system \eqref{eq:full-Euler-mass}—\eqref{eq:full-Euler-energy}
changes to
\begin{align}
 & \partial_{\xi}\left(\frac{1}{\rho u}\right)-\partial_{\eta}\left(\frac{v}{u}\right)=0,\label{eq:Lagrange_Euler_mass}\\
 & \partial_{\xi}\left(u+\frac{p}{\rho u}\right)-\partial_{\eta}\left(\frac{pv}{u}\right)=0,\label{eq:Lagrange_Euler_momentum_x}\\
 & \partial_{\xi}v+\partial_{\eta}p=0,\label{eq:Lagrange_Euler_momentum_y}\\
 & \partial_{\xi}\Phi=0.\label{eq:Lagrange_Euler_energy}
\end{align}

Under the Lagrange transformation, the shock-front $F_{\mr{s}}$ becomes
\[
\Gamma_{\mr{s}}=\set{\left(\xi,\eta\right):\ \xi=\psi\left(\eta\right),\ 0<\eta<1},
\]
and the R-H conditions \eqref{eq:RH_Euler_1}—\eqref{eq:RH_Euler_4}
on $F_{\mr{s}}$ become: 
\begin{align}
 & \left[\frac{1}{\rho u}\right]+\psi'\left[\frac{v}{u}\right]=0,\label{eq:RH_Lagrange_1-o}\\
 & \left[u+\frac{p}{\rho u}\right]+\psi'\left[\frac{pv}{u}\right]=0,\label{eq:RH_Lagrange_2-o}\\
 & \left[v\right]-\psi'\left[p\right]=0,\label{eq:RH_Lagrange_3-o}\\
 & \left[\Phi\right]=0.\label{eq:RH_Lagrange_4-o}
\end{align}
Eliminating $\psi'$ in above conditions yields
\begin{eqnarray}
G_{1}\left(U_{+},U_{-}\right) & \defs & \left[\frac{1}{\rho u}\right]\left[p\right]+\left[\frac{v}{u}\right]\left[v\right]=0,\label{eq:RH_Lagrange_1}\\
G_{2}\left(U_{+},U_{-}\right) & \defs & \left[u+\frac{p}{\rho u}\right]\left[p\right]+\left[\frac{pv}{u}\right]\left[v\right]=0,\label{eq:RH_Lagrange_2}\\
G_{3}\left(U_{+},U_{-}\right) & \defs & \left[\frac{1}{2}q^{2}+i\right]=0,\label{eq:RH_Lagrange_3}
\end{eqnarray}
and \eqref{eq:RH_Lagrange_3-o} can be used to determine
the location of the shock: 
\begin{equation}
G_{4}\left(U_{+},U_{-};\psi'\right)\defs\left[v\right]-\psi'\left[p\right]=0,\label{eq:RH_Lagrange_4_bdry}
\end{equation}

Since the speed of the background shock solution $\left(\overline{U}_{+},\overline{U}_{-}\right)$ is zero,
it holds that 
\begin{align}
 & \left[\frac{1}{\overline{\rho}\cdot\overline{q}}\right]=0,\label{eq:RH_Lagrange_bg_1}\\
 & \left[\overline{q}+\frac{\overline{p}}{\overline{\rho}\cdot\overline{q}}\right]=0,\label{eq:RH_Lagrange_bg_2}\\
 & \left[\ \overline{\Phi}\ \right]=0.\label{eq:RH_Lagrange_bg_3}
\end{align}

Furthermore, in the Lagrange coordinates, the entance $E_{1}$
and the exit $E_{2}$ become, respectively,
\begin{align*}
\Gamma_{1} & =\set{\left(\xi,\eta\right):\ \xi=0,\ 0<\eta<1},\\
\Gamma_{3} & =\set{\left(\xi,\eta\right):\ \xi=L,\ 0<\eta<1},
\end{align*}
while the lower wall $W_{1}$ and the upper wall $W_{2}$ change to, respectively
\begin{align*}
\Gamma_{2} & =\set{\left(\xi,\eta\right):\ 0<\xi<L,\ \eta=0},\\
\Gamma_{4} & =\set{\left(\xi,\eta\right):\ 0<\xi<L,\ \eta=1}.
\end{align*}
Therefore, the whole nozzle $\fd$ becomes (see Figure \ref{fig:Domain_Lagrange})
\[
\Omega=\set{\left(\xi,\eta\right):\ 0<\xi<L,\ 0<\eta<1},
\]
with the supersonic region $\fd_{-}$, as well as the subsonic region
$\fd_{+}$ as, respectively, 
\begin{align*}
\Omega_{-} & =\set{\left(\xi,\eta\right):\ 0<\xi<\psi\left(\eta\right),\ 0<\eta<1},\\
\Omega_{+} & =\set{\left(\xi,\eta\right):\ \psi\left(\eta\right)<\xi<L,\ 0<\eta<1}.
\end{align*}

\begin{figure}[th]
	\centering
	\def\svgwidth{200pt}
	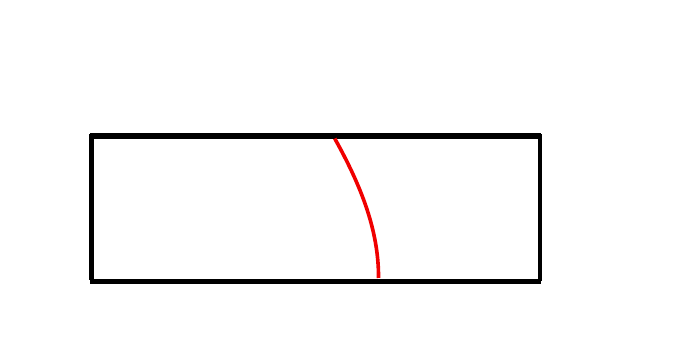
	
	\caption{The domain under the Lagrange transformation.\label{fig:Domain_Lagrange}}
\end{figure}

Thus, the free boundary problem $\llbracket NS\rrbracket$ can be reformulated as below.

\medskip{}

\textbf{The Free Boundary Problem $\llbracket NSL\rrbracket$. }

Let $P\in\mcc^{2+\alpha}(\overline{\Real_{+}})$ and $\Theta\in\mcc^{2+\alpha}[0,L]$
be given as in the free boundary problem $\llbracket NS\rrbracket$,
satisfying \eqref{eq:norm_profiles} and \eqref{eq:upper_bdry_compatibility_2nd}.
One looks for a shock solution $\left(U_{-};U_{+};\psi\right)$ to
the Euler system \eqref{eq:Lagrange_Euler_mass}—\eqref{eq:Lagrange_Euler_energy}
in $\Omega$ (see Figure \ref{fig:Domain_Lagrange}), such that:
\begin{enumerate}
\item The location of the shock front $\Gamma_{\mr{s}}$ is given by
\[
\Gamma_{\mr{s}}=\set{\left(\xi,\eta\right)^{\top}\in\Real^{2}:\ \xi=\psi\left(\eta\right),\ 0<\eta<1}.
\]
\item $U_{-}(\xi,\eta)$ solves the Euler system \eqref{eq:Lagrange_Euler_mass}—\eqref{eq:Lagrange_Euler_energy}
in the domain $\Omega_{-}$ ahead of the shock-front, coincides with
the uniform supersonic state $\overline{U}_{-}$ on $\Gamma_{1}$:
\begin{equation}
U_{-}=\overline{U}_{-},\text{ on }\Gamma_{1},\label{eq:Lagrange_entrance_cond}
\end{equation}
and satisfies the slip boundary condition on the walls of the nozzle:
\begin{align}
 & \theta_{-}=0, & \text{ on } & \Gamma_{2},\label{eq:nozzle_lower_cond_super-1}\\
 & \theta_{-}=\sigma\Theta(\xi), & \text{ on } & \Gamma_{4}.\label{eq:nozzle_upper_cond_super-1}
\end{align}
\item $U_{+}(\xi,\eta)$ solves the Euler system \eqref{eq:Lagrange_Euler_mass}—\eqref{eq:Lagrange_Euler_energy}
in the domain $\Omega_{+}$, and satisfies the slip boundary condition on the walls of the nozzle:
\begin{align}
 & \theta_{+}=0, & \text{ on } & \Gamma_{2},\label{eq:nozzle_lower_cond_sub-1}\\
 & \theta_{+}=\sigma\Theta(\xi), & \text{ on } & \Gamma_{4},\label{eq:nozzle_upper_cond_sub-1}
\end{align}
and the receiver pressure at the exit:
\begin{equation}
p_{+}=P_{e}\left(Y\left(L,\eta\right)\right),\text{ on }\Gamma_{3},\label{eq:receiver_pressure_cond-1}
\end{equation}
where 
\begin{eqnarray*}
Y\left(L,\eta\right) & = & \int_{0}^{\eta}\frac{1}{\left(\rho q\cos\theta\right)(L,s)}\dif s.
\end{eqnarray*}
\item Finally, on the shock front $\Gamma_{\mr{s}}$, $\left(U_{-};U_{+};\psi'\right)$
satisfies the R-H conditions \eqref{eq:RH_Lagrange_1}---\eqref{eq:RH_Lagrange_4_bdry}.
\end{enumerate}
\medskip{}

We are then going to solve the {free boundary problem $\llbracket NSL\rrbracket$}.
Once this is achieved, the {free boundary problem }$\llbracket NS\rrbracket$ is solved by inversing the Lagrange transformation.

\subsection{Characteristics of the Euler system}

Let $c=\sqrt{\partial_{\rho}p}$ be the sonic speed, and $M=q/c$
the Mach number. It follows from \eqref{eq:Lagrange_Euler_mass},
\eqref{eq:Lagrange_Euler_momentum_x} and \eqref{eq:Lagrange_Euler_momentum_y}
that
\begin{equation}
\partial_{\xi}\left(\frac{1}{2}q^{2}\right)+\frac{1}{\rho}\partial_{\xi}p\equiv q\partial_{\xi}q+\frac{1}{\rho}\partial_{\xi}p=0,\label{eq:Lagrange_pq}
\end{equation}
or equivalently,
\[
\rho q\partial_{\xi}q+\partial_{\xi}p=0.
\]
Since
\[
\dif i=\frac{1}{\rho}\dif p+T\dif S,
\]
where $T$ represents the temperature. Then \eqref{eq:Lagrange_Euler_energy} and \eqref{eq:Lagrange_pq} yield that
\begin{equation}
\partial_{\xi}S=0.\label{eq:Lagrange_entropy}
\end{equation}
Moreover, it follows from \eqref{eq:Lagrange_Euler_momentum_y} and \eqref{eq:Lagrange_pq} that
\begin{equation}
-\frac{\sin\theta}{\rho q}\partial_{\xi}p+q\cos\theta\partial_{\xi}\theta+\partial_{\eta}p=0.\label{eq:Lagrange_pw_1}
\end{equation}
Finally, direct calculations and
\eqref{eq:Lagrange_Euler_mass}, \eqref{eq:Lagrange_pq}
and \eqref{eq:Lagrange_entropy} imply
\begin{equation}
-\frac{\cos\theta}{\rho q}\left(M^{2}-1\right)\partial_{\xi}p+q\sin\theta\partial_{\xi}\theta-\rho q^{2}\partial_{\eta}\theta=0.\label{eq:Lagrange_pw_2}
\end{equation}

Thus, the steady Euler equations,
\eqref{eq:Lagrange_Euler_mass}---\eqref{eq:Lagrange_Euler_energy}
can be rewritten as
\begin{align}
 & \partial_{\eta}p-\frac{\sin\theta}{\rho q}\partial_{\xi}p+q\cos\theta\partial_{\xi}\theta=0,\label{eq:Euler_Lagrange_matrix_1}\\
 & \partial_{\eta}\theta-\frac{\sin\theta}{\rho q}\partial_{\xi}\theta-\frac{\cos\theta}{\rho q}\cdot\frac{1-M^{2}}{\rho q^{2}}\partial_{\xi}p=0,\label{eq:Euler_Lagrange_matrix_2}\\
 & \rho q\partial_{\xi}q+\partial_{\xi}p=0,\label{eq:Euler_Lagrange_matrix_3}\\
 & \partial_{\xi}S=0.\label{eq:Euler_Lagrange_matrix_4}
\end{align}
\eqref{eq:Euler_Lagrange_matrix_3} can be also replaced by
\begin{equation}
\partial_{\xi}\Phi=0.\label{eq:Euler_Lagrange_matrix_3-B}
\end{equation}
It is clear that \eqref{eq:Euler_Lagrange_matrix_3} 
and \eqref{eq:Euler_Lagrange_matrix_4} are simple hyperbolic equations.
Furthermore, the equations \eqref{eq:Euler_Lagrange_matrix_1} and
\eqref{eq:Euler_Lagrange_matrix_2} can be written as
\[
A\left(U\right)\partial_{\xi}\begin{bmatrix}p\\
\theta
\end{bmatrix}+\partial_{\eta}\begin{bmatrix}p\\
\theta
\end{bmatrix}=0,
\]
where 
\[
A\left(U\right)=\frac{1}{\rho q}\begin{bmatrix}-\sin\theta & \rho q^{2}\cos\theta\\
\frac{M^{2}-1}{\rho q^{2}}\cos\theta & -\sin\theta
\end{bmatrix}.
\]
The eigenvalues of $A\left(U\right)$ are
\[
\lambda_{\pm}=\frac{1}{\rho q}\left(-\sin\theta\pm\cos\theta\sqrt{M^{2}-1}\right).
\]
Hence, for supersonic flows, $M>1$, the eigenvalues are real numbers so that the equations \eqref{eq:Euler_Lagrange_matrix_1}
and \eqref{eq:Euler_Lagrange_matrix_2} form a hyperbolic system;
while for subsonic flows, $M<1$, the eigenvalues
are a pair of conjugate complex numbers so that the equations
\eqref{eq:Euler_Lagrange_matrix_1} and \eqref{eq:Euler_Lagrange_matrix_2}
form an elliptic system of first order. In conclusion, the Euler system
\eqref{eq:Euler_Lagrange_matrix_1}—\eqref{eq:Euler_Lagrange_matrix_4}
is a quasilinear hyperbolic system in the supersonic region, and
it is an elliptic-hyperbolic composite system in the subsonic region.

\subsection{The initial approximating locations of the shock-front}

For the {free boundary problem $\llbracket NSL\rrbracket$,} since the Euler system is hyperbolic for supersonic flows, the state of supersonic flow
field $U_{-}$ ahead the shock front can be determined by applying
the well-known theory for local classical solutions of quasilinear
hyperbolic systems (see \cite{LiYu1985}). Therefore, it suffices to determine the location of the shock front as well as the state
of the subsonic flow field $U_{+}$ behind the shock front. However, since the location of the unperturbed normal shock solution $\left(\overline{U}_{+},\overline{U}_{-}\right)$ can be anywhere in the nozzle, it  gives no information where the shock-front may appear as the nozzle's boundary is perturbed.
Therefore, a key difficulty is that how to determine the shock-front. To deal with this difficulty, in this paper, a free boundary problem for the linearized Euler system will be designed, in which the free boundary will be taken as an initial approximating location for the shock-front. With this initial approximation, a further nonlinear iteration scheme will be designed which lead to a transonic shock solution to the free boundary problem $\llbracket NSL\rrbracket$.

Let $\overline{\psi}(\eta)\equiv\overline{\xi}_{*}$, with $\overline{\xi}_{*}\in\left(0,L\right)$ an unknown constant to be determined, and
\[
\dot{\Gamma}_{\mr{s}}=\set{\left(\xi,\eta\right):\ \xi=\overline{\psi}(\eta),\ 0<\eta<1},
\]
which is taken as the initial approximating location of the shock-front.
Clearly, $\overline{\psi} '(\eta)\equiv 0$. Then, $\dot{\Gamma}_{\mr{s}}$ devides the domain $\Omega$ into two
parts $\dot{\Omega}_{-}$ and $\dot{\Omega}_{+}$ ( see Figure \ref{fig:Domain_First_approx}) as:
\begin{eqnarray*}
\dot{\Omega}_{-} & = & \set{\left(\xi,\eta\right):\ 0<\xi<\overline{\xi}_{*},\ 0<\eta<1},\\
\dot{\Omega}_{+} & = & \set{\left(\xi,\eta\right):\ \overline{\xi}_{*}<\xi<L,\ 0<\eta<1},
\end{eqnarray*}
and the boundaries $\Gamma_{2}$ and $\Gamma_{4}$  consist of
\begin{eqnarray*}
\dot{\Gamma}_{2}^{-} & = & \set{\left(\xi,\eta\right):\ 0<\xi<\overline{\xi}_{*},\ \eta=0},\\
\dot{\Gamma}_{2}^{+} & = & \set{\left(\xi,\eta\right):\ \overline{\xi}_{*}<\xi<L,\ \eta=0},\\
\dot{\Gamma}_{4}^{-} & = & \set{\left(\xi,\eta\right):\ 0<\xi<\overline{\xi}_{*},\ \eta=1},\\
\dot{\Gamma}_{4}^{+} & = & \set{\left(\xi,\eta\right):\ \overline{\xi}_{*}<\xi<L,\ \eta=1}.
\end{eqnarray*}

Assume that $\left(\overline{U}_{-};\overline{U}_{+}\right)$ is the unperturbed normal shock solution, with $\overline{U}_{-}=\left(\overline{p}_{-},0,\overline{q}_{-},\overline{S}_{-}\right)^{\top}$ and $\overline{U}_{+}=\left(\overline{p}_{+},0,\overline{q}_{+},\overline{S}_{+}\right)^{\top}$. 
Let $\dot{U}_{-} \defs \left(\dot{p}_{-},\dot{\theta}_{-},\dot{q}_{-},\dot{S}_{-}\right)^{\top}$  satisfy the the linearized Euler system at the uniform supersonic state $\overline{U}_{-}$ below, which will yield an initial approximation for the supersonic flow $U_{-}$:
\begin{align}
& \partial_{\eta}\dot{p}_{-}+\overline{q}_{-}\partial_{\xi}\dot{\theta}_{-}=0,\label{eq:first_approx_super_1}\\
& \partial_{\eta}\dot{\theta}_{-}-\frac{1}{\overline{\rho}_{-}\overline{q}_{-}}\cdot\frac{1-\overline{M}_{-}^{2}}{\overline{\rho}_{-}\overline{q}_{-}^{2}}\partial_{\xi}\dot{p}_{-}=0,\label{eq:first_approx_super_2}\\
& \overline{\rho}_{-}\overline{q}_{-}\partial_{\xi}\dot{q}_{-}+\partial_{\xi}\dot{p}_{-}=0,\label{eq:first_approx_super_3}\\
& \partial_{\xi}\dot{S}_{-}=0.\label{eq:first_approx_super_4}
\end{align}
Let $\dot{U}_{+} \defs \left(\dot{p}_{+},\dot{\theta}_{+},\dot{q}_{+},\dot{S}_{+}\right)^{\top}$ satisfy the linearized Euler system at the uniform subsonic state $\overline{U}_{+}$ below, which will give an initial approximation for  the subsonic flow $U_{+}$:
\begin{align}
& \partial_{\eta}\dot{p}_{+}+\overline{q}_{+}\partial_{\xi}\dot{\theta}_{+}=0,\label{eq:first_approx_sub_1}\\
& \partial_{\eta}\dot{\theta}_{+}-\frac{1}{\overline{\rho}_{+}\overline{q}_{+}}\cdot\frac{1-\overline{M}_{+}^{2}}{\overline{\rho}_{+}\overline{q}_{+}^{2}}\partial_{\xi}\dot{p}_{+}=0,\label{eq:first_approx_sub_2}\\
& \partial_{\xi}\left(\overline{q}_{+}\dot{q}_{+}+\frac{1}{\overline{\rho}_{+}}\dot{p}_{+}+\overline{T}_{+}\dot{S}_{+}\right)=0,\label{eq:first_approx_sub_3}\\
& \partial_{\xi}\dot{S}_{+}=0.\label{eq:first_approx_sub_4}
\end{align}

\begin{figure}[th]
\centering
\def\svgwidth{200pt}
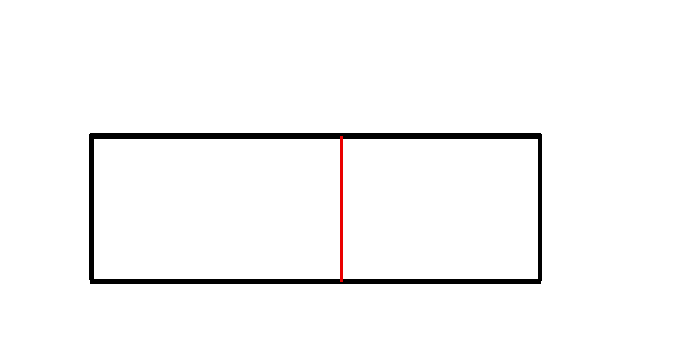

\caption{The domain for the problem on the initial approximating location of
the shock-front.\label{fig:Domain_First_approx}}
\end{figure}

Then the desired linear free boundary problem can be summarized as follows.
\medskip{}

\textbf{The Free Boundary Problem $\llbracket FL\rrbracket$ for the Linearization.}

Try to determine $\left(\dot{U}_{-};\dot{U}_{+};\dot{\psi}'|\ \overline{\xi}_{*}\right)$
such that
\begin{enumerate}
	\item $\dot{U}_{-}$ satisfies the linearized equations \eqref{eq:first_approx_super_1}-\eqref{eq:first_approx_super_4}
	in $\dot{\Omega}_{-}$ and the bounary conditions: 
	\begin{align}
	& \dot{U}_{-}\left(0,\eta\right)=0, & \text{ on } & \dot{\Gamma}_{1}, \label{eq:first_approx_super_bc-1}\\
	& \dot{\theta}_{-}=0, & \text{ on } & \dot{\Gamma}_{2}^{-}, \label{eq:first_approx_super_bc-2}\\
	& \dot{\theta}_{-}=\dot{\Theta}_{N}^{-}\left(\xi\right), & \text{ on } & \dot{\Gamma}_{4}^{-}, \label{eq:first_approx_super_bc-4}
	\end{align}
	where $\dot{\Theta}_{N}^{-}\left(\xi\right)=\sigma\Theta(\xi)$ with
	$\xi\in[0,\overline{\xi}_{*}]$;
	\item $\dot{U}_{+}$ satisfies the linearized equations \eqref{eq:first_approx_sub_1}-\eqref{eq:first_approx_sub_4}
	in $\dot{\Omega}_{+}$ and the bounary conditions: 
	\begin{align}
	& \dot{p}_{+}\left(L,\eta\right)=\dot{P}_{e}\left(\eta\right), & \text{ on } & \dot{\Gamma}_{3}, \label{eq:first_approx_sub_bc-3}\\
	& \dot{\theta}_{+}=0, & \text{ on } & \dot{\Gamma}_{2}^{+},\label{eq:first_approx_sub_bc-2}\\
	& \dot{\theta}_{+}=\dot{\Theta}_{N}^{+}\left(\xi\right), & \text{ on } & \dot{\Gamma}_{4}^{+},\label{eq:first_approx_sub_bc-4}
	\end{align}
	where $\dot{P}_{e}(\eta)=\sigma P(\eta)$ with $\eta\in[0,1]$, and $\dot{\Theta}_{N}^{+}\left(\xi\right)=\sigma\Theta(\xi)$ with
	$\xi\in[\overline{\xi}_{*},L]$;
	\item Across the free boundary $\dot{\Gamma}_{\mr{s}}$, where $\overline{\xi}_{*}\in(0,L)$ will be determined together with $\dot{U}_{-}$ and $\dot{U}_{+}$, $\left(\dot{U}_{-},\dot{U}_{+};\ \dot{\psi}' \right)$ satisfies the linearized R-H conditions 
	at $\left(\overline{U}_{+},\overline{U}_{-};\ \overline{\psi}'\right)$ below.
	\begin{align}
	& \beta_{j}^{+}\cdot\dot{U}_{+}+\beta_{j}^{-}\cdot\dot{U}_{-}=0,\quad j=1,2,3, & \text{ on } & \dot{\Gamma}_{s}\label{eq:first_approx_RH_1}\\
	& \beta_{4}^{+}\cdot\dot{U}_{+}+\beta_{4}^{-}\cdot\dot{U}_{-}-\left[\overline{p}\right]\dot{\psi}'=0, & \text{ on } & \dot{\Gamma}_{s}\label{eq:first_approx_RH_2}
	\end{align}
	where $\beta_{j}^{+}=\nabla_{U_{+}}G_{j}|_{\left(\overline{U}_{+},\overline{U}_{-}\right)}$,
	$\beta_{j}^{-}=\nabla_{U_{-}}G_{j}|_{\left(\overline{U}_{+},\overline{U}_{-}\right)}$
	$\left(j=1,2,3\right)$, $\beta_{4}^{+}=\nabla_{U_{+}}G_{4}|_{\left(\overline{U}_{+},\overline{U}_{-};\overline{\psi}'\right)}$,
	and $\beta_{4}^{-}=\nabla_{U_{-}}G_{4}|_{\left(\overline{U}_{+},\overline{U}_{-};\overline{\psi}'\right)}$.
\end{enumerate}
\medskip{}

It should be noted that the coefficients in the linearized R-H conditions \eqref{eq:first_approx_RH_1}-\eqref{eq:first_approx_RH_2} have explicit forms given below, which could be verified by direct computations.
\begin{lem}
\label{lem:first_approx_RH_coefficients}
\begin{eqnarray*}
\beta_{1}^{\pm}=\nabla_{U_{\pm}}G_{1}|_{\left(\overline{U}_{+},\overline{U}_{-}\right)} & = & \pm\frac{1}{\overline{\rho}_{\pm}\overline{q}_{\pm}}\left[\overline{p}\right]\cdot\left(-\frac{1}{\overline{\rho}_{\pm}\overline{c}_{\pm}^{2}},0,-\frac{1}{\overline{q}_{\pm}},\frac{1}{\gamma c_{\mr{v}}}\right)^{\top},\\
\beta_{2}^{\pm}=\nabla_{U_{\pm}}G_{2}|_{\left(\overline{U}_{+},\overline{U}_{-}\right)} & = & \pm\frac{1}{\overline{\rho}_{\pm}\overline{q}_{\pm}}\left[\overline{p}\right]\cdot\left(1-\frac{\overline{p}_{\pm}}{\overline{\rho}_{\pm}\overline{c}_{\pm}^{2}},0,\overline{\rho}_{\pm}\overline{q}_{\pm}-\frac{\overline{p}_{\pm}}{\overline{q}_{\pm}},\frac{\overline{p}_{\pm}}{\gamma c_{\mr{v}}}\right)^{\top},\\
\beta_{3}^{\pm}=\nabla_{U_{\pm}}G_{3}|_{\left(\overline{U}_{+},\overline{U}_{-}\right)} & = & \pm\left(\frac{1}{\overline{\rho}_{\pm}},0,\overline{q}_{\pm},\frac{1}{\left(\gamma-1\right)c_{\mr{v}}}\cdot\frac{\overline{p}_{\pm}}{\overline{\rho}_{\pm}}\right)^{\top},\\
\beta_{4}^{\pm}=\nabla_{U_{\pm}}G_{4}|_{\left(\overline{U}_{+},\overline{U}_{-};\overline{\psi}'\right)} & = & \pm\left(0,\overline{q}_{\pm},0,0\right)^{\top}.
\end{eqnarray*}
\end{lem}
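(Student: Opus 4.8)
The plan is a direct differentiation of $G_{1},\dots,G_{4}$ from \eqref{eq:RH_Lagrange_1}--\eqref{eq:RH_Lagrange_4_bdry}, arranged so that the structural vanishing of certain brackets at the background state removes most of the terms before any explicit use of the equation of state is needed.

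First I would record the background data. Since $\overline{U}_{\pm}=(\overline{p}_{\pm},0,\overline{q}_{\pm},\overline{S}_{\pm})^{\top}$, one has $\overline{\theta}_{\pm}=0$, hence $\overline{u}_{\pm}=\overline{q}_{\pm}$, $\overline{v}_{\pm}=0$ and $\tan\overline{\theta}_{\pm}=0$; combined with the background Rankine--Hugoniot relations \eqref{eq:RH_Lagrange_bg_1}--\eqref{eq:RH_Lagrange_bg_2} and the normalization \eqref{eq:total_mass}, this gives
\[
\left[\frac{1}{\overline{\rho}\,\overline{q}}\right]=0,\qquad\left[\overline{q}+\frac{\overline{p}}{\overline{\rho}\,\overline{q}}\right]=0,\qquad[\overline{v}]=0,\qquad[\tan\overline{\theta}]=0,\qquad[\overline{p}\tan\overline{\theta}]=0,
\]
while $[\overline{p}]=\overline{p}_{+}-\overline{p}_{-}\neq0$. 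Recalling $u=q\cos\theta$ and $v=q\sin\theta$, each of $G_{1}$ and $G_{2}$ has the form $[f]\,[p]+[g]\,[v]$ with $f\in\{\frac{1}{\rho u},\ u+\frac{p}{\rho u}\}$ and $g\in\{\frac{v}{u},\ \frac{pv}{u}\}$. Applying $\nabla_{U_{\pm}}$ via the product rule and evaluating at $(\overline{U}_{+},\overline{U}_{-})$, every term carrying a factor $[\overline{f}]$, $[\overline{v}]$ or $[\overline{g}]$ drops out, leaving
\[
\beta_{j}^{\pm}=\pm[\overline{p}]\,\nabla_{U_{\pm}}f_{\pm}\big|_{\overline{U}_{\pm}},\qquad j=1,2 .
\]
For $G_{3}=\big[\tfrac12 q^{2}+i\big]$ and $G_{4}=[v]-\psi'[p]$ one directly gets $\beta_{3}^{\pm}=\pm\nabla_{U_{\pm}}\big(\tfrac12 q^{2}+i\big)\big|_{\overline{U}_{\pm}}$ and, since $\overline{\psi}'=0$, $\beta_{4}^{\pm}=\pm\nabla_{U_{\pm}}v\big|_{\overline{U}_{\pm}}$.

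Next I would compute the elementary thermodynamic derivatives from the polytropic law \eqref{eq:polytropic}: at $\overline{U}_{\pm}$,
\[
\partial_{p}\rho=\frac{\rho}{\gamma p}=\frac{1}{\overline{c}_{\pm}^{2}},\qquad\partial_{S}\rho=-\frac{\rho}{\gamma c_{\mr{v}}},\qquad\partial_{p}i=\frac{1}{\rho},\qquad\partial_{S}i=T=\frac{1}{(\gamma-1)c_{\mr{v}}}\cdot\frac{\overline{p}_{\pm}}{\overline{\rho}_{\pm}},
\]
where $\overline{c}_{\pm}^{2}=\gamma\overline{p}_{\pm}/\overline{\rho}_{\pm}$. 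Then, differentiating the scalar functions $\frac{1}{\rho q\cos\theta}$, $q\cos\theta+\frac{p}{\rho q\cos\theta}$, $q\sin\theta$, and $\frac12 q^{2}+i$ with respect to $(p,\theta,q,S)$ and setting $\theta=0$: each $\theta$-derivative carries a factor $\sin\theta$, so the second component of every gradient is $0$, and the other components are read off immediately. Finally I would invoke $\overline{\rho}_{\pm}\overline{q}_{\pm}=1$ to rewrite the vectors in the symmetric form stated in the lemma; e.g. $\frac{1}{\rho^{2}qc^{2}}=\frac{1}{\overline{\rho}_{\pm}\overline{q}_{\pm}}\cdot\frac{1}{\overline{\rho}_{\pm}\overline{c}_{\pm}^{2}}$ and $\frac{1}{\rho q^{2}}=\frac{1}{\overline{\rho}_{\pm}\overline{q}_{\pm}}\cdot\frac{1}{\overline{q}_{\pm}}$ at the background. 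Matching coefficient by coefficient then produces the four displayed formulas.

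There is no genuine analytic difficulty here; the argument is routine but the bookkeeping must be done with care. The main thing to be vigilant about is that the product-rule terms proportional to $[\overline{f}]$, $[\overline{g}]$ and $[\overline{v}]$ really do vanish at the background state --- this is exactly where the background relations \eqref{eq:RH_Lagrange_bg_1}--\eqref{eq:RH_Lagrange_bg_2} and $\overline{\theta}_{\pm}=0$ enter --- and that the sign convention $[\,\cdot\,]=(\cdot)_{+}-(\cdot)_{-}$ is tracked correctly, so that $\nabla_{U_{-}}$ supplies the overall minus sign in $\beta_{j}^{-}$.
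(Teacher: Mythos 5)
Your proof is correct and is exactly the direct computation the paper invokes (the paper states the lemma ``could be verified by direct computations'' without supplying details); your organizing observation that the product-rule terms vanish because $[\overline{1/(\rho u)}]=[\overline{u+p/(\rho u)}]=[\overline{v}]=[\overline{v/u}]=[\overline{pv/u}]=0$ at the background is precisely the right way to tame the bookkeeping. One minor point: the factorization into the displayed form, e.g.\ $\frac{1}{\overline{\rho}_\pm^2\overline{q}_\pm\overline{c}_\pm^2}=\frac{1}{\overline{\rho}_\pm\overline{q}_\pm}\cdot\frac{1}{\overline{\rho}_\pm\overline{c}_\pm^2}$, is pure algebra and does not actually require the normalization $\overline{\rho}_\pm\overline{q}_\pm=1$, so that invocation at the end is unnecessary (though harmless).
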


\medskip{}

\begin{rem}
There are several remarks in order.
\begin{enumerate}
\item In the free boundary problem $\llbracket FL\rrbracket$, the linearized R-H condition \eqref{eq:first_approx_RH_2}
on $\dot{\Gamma}_{\mr{s}}$ will be used only in determining $\dot{\psi}'$
once $\dot{U}_{-}$ and $\dot{U}_{+}$ have been determined. And $\overline{\psi}'$ will be updated by  $\dot{\psi}'$ in the next step of the iteration.
\item In the free boundary problem $\llbracket FL\rrbracket$, $\dot{\Gamma}_{\mr{s}}$ is a free boundary,
which means that $\overline{\xi}_{*}$ is unknown and needs to be determined
together with $\dot{U}_{-}$ and $\dot{U}_{+}$. Indeed, since $\overline{U}_{+}$
is subsonic, the subsystem \eqref{eq:first_approx_sub_1}—\eqref{eq:first_approx_sub_2}
is an elliptic system of first order, thus there may exist no solutions
for the boundary value problem unless the boundary data satisfy certain
solvability condition which depends on the boundary geometry $\Theta$
and the perturbation of the receiver pressure $P$. Fortunately, as
will be shown later, the solvability condition can be formulated explicitely
and it will be employed to determine the value of $\overline{\xi}_{*}$.
Indeed, set
\begin{eqnarray}
R\left(\xi\right) & \defs & \int_{0}^{L}\Theta\left(\tau\right)\dif\tau-\dot{K}\int_{0}^{\xi}\Theta\left(\tau\right)\dif\tau,\qquad\xi\in(0,L),\label{eq:first_approx_criterion_nozzle_bdry}\\
\dot{P}_{*} & \defs & \frac{1}{\overline{\rho}_{+}\overline{q}_{+}}\cdot\frac{1-\overline{M}_{+}^{2}}{\overline{\rho}_{+}\overline{q}_{+}^{2}}\cdot\int_{0}^{1}P\left(\eta\right)\dif\eta,\label{eq:first_approx_criterion_pressure_exit}
\end{eqnarray}
where 
\[
\dot{K}\defs\left[\overline{p}\right]\cdot\left(\frac{\gamma-1}{\gamma\overline{p}_{+}}+\frac{1}{\overline{\rho}_{+}\overline{q}_{+}^{2}}\right)>0.
\]
Then the solvability condition turns out to be
\begin{equation}
R(\overline{\xi}_{*})=\dot{P}_{*}.\label{eq:first_approx_criterion_equation-1}
\end{equation}
Now let 
\begin{eqnarray*}
\underline{R} & \defs & \inf_{\xi\in\left(0,L\right)}R\left(\xi\right),\\
\overline{R} & \defs & \sup_{\xi\in\left(0,L\right)}R\left(\xi\right),
\end{eqnarray*}
and assume that
\begin{equation}
\underline{R}\leq\dot{P}_{*}\leq\overline{R},\label{eq:criterion_range_pressure_exit-1}
\end{equation}
then there exists at least one solution $\overline{\xi}_{*}\in[0,\ L]$
to the equation \eqref{eq:first_approx_criterion_equation-1} such
that there exists at least one solution to the free boundary problem $\llbracket FL\rrbracket$.
And obviously, for a general continuous function $\Theta$, there
may exist multiple solutions $\overline{\xi}_{*}\in[0,\ L]$ to the equation
\eqref{eq:first_approx_criterion_equation-1}.
\end{enumerate}
\end{rem}

\subsection{Main theorems and remarks}

Before describing the main theorems, we first introduce the function
spaces to be used in this paper and give some conventions on notations.
\begin{rem}[Function Spaces and Their Notations]
Different spaces for hyperbolic part and elliptic
part will be used.

\begin{enumerate}
\item For the hyperbolic part of the problem, it is natural to use the
classical H\"older spaces. Let $\Omega\subset\Real^{n}$ be a bounded
domain, $k\geq0$ be an interger, and $0<\alpha<1$. 
$\mcc^{k,\alpha}(\Omega)$ denote the classical H\"older spaces with the
index $(k,\alpha)$ for functions with continuous derivatives up to
$k$-th order, equipped with the classical $\mcc^{k,\alpha}(\Omega)$
norm:
\[
\norm{u}_{\mcc^{k,\alpha}(\Omega)}\defs\sum_{\abs{\mbnu}\leq k}\sup_{x\in\Omega}\abs{D^{\mbnu}u(x)}+\sum_{\abs{\mbnu}=k}\sup_{x,y\in\Omega;x\not=y}\frac{\abs{D^{\mbnu}u(x)-D^{{\bf \mbnu}}u(y)}}{\abs{x-y}^{\alpha}},
\]
where $\mbnu=(\nu_{1},\cdots,\nu_{n})$ is a multi-index and $\abs{\mbnu}=\sum\limits _{j=1}^{n}\nu_{j}$.
For the elliptic part of the problem, since the boundary of the
domain has corner singularities, the Sobolev spaces $W_{\beta}^{s}(\Omega)$
with $1\leq\beta\leq\infty$ will be employed. The index ``$s$''
will take real value, as defined in \cite{Grisvard1985}, for the
trace function on the boundary. Let $s=k+\alpha$, with $k\geq0$
an integer and $0<\alpha<1$. Define
\[
\norm{u}_{W_{\beta}^{s}(\Omega)}\defs\set{\norm{u}_{W_{\beta}^{k}(\Omega)}^{\beta}+\sum_{\abs{\mbnu}=k}\int\int_{\Omega\times\Omega}\frac{\abs{D^{\mbnu}u(x)-D^{{\bf \mbnu}}u(y)}^{\beta}}{\abs{x-y}^{n+\alpha\beta}}\dif x\dif y}^{1/\beta}.
\]
It should be noted that for any $u\in W_{\beta}^{1}(\Omega)$, its
trace on the boundary $u|_{\partial\Omega}\in W_{\beta}^{1-\frac{1}{\beta}}(\partial\Omega)$.
\item The Euler system for subsonic flows is elliptic-hyperbolic composite
and the Sobolev norms as well as the H\"older norms will be combined
to describe the flow behind the shock-front. Let $\overline{\psi}(\eta)\equiv\overline{\xi}_{*}$,
and $\Omega(\overline{\psi})$ be a rectangle:
\begin{align*}
\Omega(\overline{\psi})\defs & \set{\left(\xi,\eta\right):\ \overline{\psi}(\eta)<\xi<L,\ 0<\eta<1},\\
\Gamma(\overline{\psi})\defs & \set{\left(\xi,\eta\right):\ \xi=\overline{\psi}(\eta),\ 0<\eta<1}.
\end{align*}
For the state $U=(p,\theta,q,S)^{\top}$ of the fluid on $\Omega(\overline{\psi})$,
define 
\begin{align*}
\norm{U}_{(\Omega(\overline{\psi});\Gamma(\overline{\psi}))}\defs & \norm{p}_{W_{\beta}^{1}(\Omega(\overline{\psi}))}+\norm{\theta}_{W_{\beta}^{1}(\Omega(\overline{\psi}))}\\
 & +\norm{\left(q,S\right)}_{\mcc(\overline{\Omega(\overline{\psi})})}+\norm{\left(q,S\right)}_{W_{\beta}^{1-1/\beta}(\Gamma(\overline{\psi}))}.
\end{align*}
Since the shock-front is a free boundary, the domain 
\[
\Omega(\psi)\defs\set{\left(\xi,\eta\right):\ \psi(\eta)<\xi<L,\ 0<\eta<1}
\]
for the subsonic flow behind the shock-front needs to be transformed
to the fixed domain $\Omega(\overline{\psi})$. Suppose that $\norm{\psi-\overline{\psi}}_{\mcc^{1,\alpha}([0,1])}$
is small such that the transform $\Pi_{\psi}:\ \Omega(\psi)\sTo\Omega(\overline{\psi})$
defined below is invertible:
\begin{equation}
\Pi_{\psi}:\ \begin{cases}
\tilde{\xi}=L+\frac{L-\overline{\psi}(\eta)}{L-\psi(\eta)}\cdot(\xi-L),\\
\tilde{\eta}=\eta.
\end{cases}\label{eq:coordinate_tansform_fix_bdry}
\end{equation}
It is obvious that the free boundary 
\[
\Gamma(\psi)\defs\set{\left(\xi,\eta\right):\ \xi=\psi(\eta),\ 0<\eta<1}
\]
of $\Omega(\psi)$ becomes $\Gamma(\overline{\psi})$ under the transform
$\Pi_{\psi}$. Then define the norm of $U$ on the domain $\Omega(\psi)$
as
\[
\norm{U}_{(\Omega(\psi);\Gamma(\psi))}\defs\norm{U\circ\Pi_{\psi}^{-1}}_{(\Omega(\overline{\psi});\Gamma(\overline{\psi}))}.
\]
 
\end{enumerate}
\end{rem}


Let $\left(\overline{U}_{-};\overline{U}_{+}\right)$ be a normal
shock solution. Then the main theorem proved in this paper is as below.
\begin{thm}
\label{thm:main_thm_Lagrange}

Suppose that $\overline{\xi}_{*}\in(0,L)$ satisfies \eqref{eq:first_approx_criterion_equation-1}
and $\Theta(\overline{\xi}_{*})\not=0$. Then there exists a sufficiently
small constant $\sigma_{0}>0$, depending on $\overline{U}_{-}$,
$\overline{U}_{+}$, $L$, $\overline{\xi}_{*}$, and ${\displaystyle \frac{1}{\abs{\Theta(\overline{\xi}_{*})}}}$,
such that for any $0<\sigma<\sigma_{0}$, there exists a transonic
shock solution $\left(U_{-};U_{+};\psi\right)$ to the free boundary
problem $\llbracket NSL\rrbracket$, which satisfies the following
estimates, with $0<\alpha<1$, and $\beta>2$:
\begin{align}
 & \abs{\psi(1)-\overline{\xi}_{*}}\leq C_{s}\sigma,\qquad\norm{\psi'}_{W_{\beta}^{1-\frac{1}{\beta}}(\Gamma_{\mr{s}})}\leq C_{s}\sigma,\label{eq:solution_Lagrange_estimates_shock}\\
 & \norm{U_{-}-\overline{U}_{-}}_{\mcc^{2,\alpha}(\Omega_{-})}\leq C_{s}^{-}\sigma,\label{eq:solution_Lagrange_estimates_super}\\
 & \norm{U_{+}-\overline{U}_{+}}_{(\Omega_{+};\Gamma_{\mr{s}})}\leq C_{s}^{+}\sigma,\label{eq:solution_Lagrange_estimates_sub}
\end{align}
where $C_{s}$, $C_{s}^{-}$ and $C_{s}^{+}$ are constants depending
on $\overline{U}_{-}$, $\overline{U}_{+}$, $L$, $\overline{\xi}_{*}$,
and ${\displaystyle \frac{1}{\abs{\Theta(\overline{\xi}_{*})}}}$.

Furthermore, let $\left(\dot{U}_{-};\dot{U}_{+};\dot{\psi}'|\ \overline{\xi}_{*}\right)$
be a solution to the free boundary problem $\llbracket FL\rrbracket$.
Then it holds that:
\begin{align}
 & \norm{\psi'-\dot{\psi}'}_{W_{\beta}^{1-\frac{1}{\beta}}(\Gamma_{\mr{s}})}\leq\frac{1}{2}\sigma^{3/2},\label{eq:solution_Lagrange_estimates_shock-1}\\
 & \norm{U_{-}-\left(\overline{U}_{-}+\dot{U}_{-}\right)}_{\mcc^{1,\alpha}(\Omega)}\leq\frac{1}{2}\sigma^{3/2},\label{eq:solution_Lagrange_estimates_super-1}\\
 & \norm{U_{+}\circ\Pi_{\psi}^{-1}-\left(\overline{U}_{+}+\dot{U}_{+}\right)}_{(\dot{\Omega}_{+};\dot{\Gamma}_{\mr{s}})}\leq\frac{1}{2}\sigma^{3/2}.\label{eq:solution_Lagrange_estimates_sub-1}
\end{align}
\end{thm}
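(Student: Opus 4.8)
The plan is to construct the transonic shock solution by a nonlinear iteration whose zeroth approximation is the solution $\left(\dot{U}_{-};\dot{U}_{+};\dot{\psi}'\,|\,\overline{\xi}_{*}\right)$ of the linear free boundary problem $\llbracket FL\rrbracket$, whose existence under the stated hypotheses is the object of Section 3; there one also exploits the fact that $\Theta(\overline\xi_*)\neq0$, together with \eqref{eq:first_approx_criterion_equation-1}, says precisely that $\overline\xi_*$ is a \emph{simple} zero of $\xi\mapsto R(\xi)-\dot P_*$, since $R'(\xi)=-\dot K\,\Theta(\xi)$. I would represent a candidate shock by a curve $\Gamma_{\mr{s}}(\psi)=\set{\xi=\psi(\eta)}$ close to the vertical segment $\xi\equiv\overline\xi_*$, pull the variable subsonic domain $\Omega_+(\psi)$ back to the fixed rectangle $\dot\Omega_+$ by the transform $\Pi_\psi$ of \eqref{eq:coordinate_tansform_fix_bdry}, and seek, in the combined H\"older/weighted--Sobolev space of the theorem, a fixed point of the solution map $\mathcal{T}:\psi\mapsto\widetilde\psi$ defined by the following steps.

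First I would solve the quasilinear hyperbolic system \eqref{eq:Euler_Lagrange_matrix_1}--\eqref{eq:Euler_Lagrange_matrix_4} for $U_-$ on the whole nozzle $\Omega$, with Cauchy data $U_-=\overline U_-$ on $\Gamma_1$ and wall conditions $\theta_-=0$ on $\Gamma_2$, $\theta_-=\sigma\Theta$ on $\Gamma_4$; by the classical theory of \cite{LiYu1985} this gives, for $\sigma$ small, a unique $U_-$ with $\norm{U_--\overline U_-}_{\mcc^{2,\alpha}(\Omega)}\le C\sigma$, independent of $\psi$, and in particular its trace on $\Gamma_{\mr{s}}(\psi)$. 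Next, along $\Gamma_{\mr{s}}(\psi)$ I would solve the three algebraic Rankine--Hugoniot relations $G_1=G_2=G_3=0$ for $(p_+,q_+,S_+)$ in terms of $\theta_+$ and the known $U_-$, which records the Bernoulli and entropy data to be transported into $\Omega_+$ and the relation $p_+=g(\theta_+;U_-)$ to be used as a boundary condition for the elliptic part, while $G_4=[v]-\psi'[p]=0$ is set aside to update $\psi'$. Then, after the pull-back $\Pi_\psi$, the subsonic Euler system on $\dot\Omega_+$ decouples into two transport equations $\partial_\xi S=0$, $\partial_\xi\Phi=0$ carrying the data from $\Gamma_{\mr{s}}$, together with a first-order elliptic system for $(p_+,\theta_+)$ with $\theta_+$ prescribed on $\Gamma_2\cup\Gamma_4$, $p_+=P_e(Y(L,\cdot))$ on $\Gamma_3$, and $p_+=g(\theta_+;U_-)$ on $\Gamma_{\mr{s}}$; the well-posedness theory for constant-coefficient first-order elliptic mixed boundary value problems in the appendix --- which applies modulo one scalar solvability condition on the data --- then yields $\norm{(p_+,\theta_+)-(\overline p_+,0)}_{W_{\beta}^{1}(\dot\Omega_+)}\le C\sigma$ once that condition holds, and a brief inner iteration closes the coupling with the transport equations.

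The crux, and the step I expect to be the main obstacle, is this solvability condition. Obtained by pairing the elliptic system against the (one-dimensional) kernel of the adjoint mixed problem --- equivalently, by integrating the divergence-form identities over $\dot\Omega_+$ --- it can be written explicitly as $R(\overline\xi_*)=\dot P_*+\mathcal{E}(\psi,U_-,\sigma)$, the nonlinear analogue of \eqref{eq:first_approx_criterion_equation-1}, where $\mathcal{E}$ is a higher-order remainder, Lipschitz in $\psi$ with constant $O(\sigma)$. Because $R'(\overline\xi_*)=-\dot K\,\Theta(\overline\xi_*)\neq0$, the implicit function theorem selects, for each admissible small $\psi$ and each $0<\sigma<\sigma_0$, a unique anchor $\overline\xi_*=\overline\xi_*(\psi,\sigma)$ with $\abs{\overline\xi_*(\psi,\sigma)-\overline\xi_*}\le C\sigma$ making the subsonic problem solvable; this is exactly where the dependence of $\sigma_0$ and of the constants $C_{\mr{s}},C_{\mr{s}}^{\pm}$ on $1/\abs{\Theta(\overline\xi_*)}$ enters, since $1/\abs{\Theta(\overline\xi_*)}$ controls the inverse of $R'$ near $\overline\xi_*$. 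The delicacy is that $\mathcal{E}$, the elliptic solution and the pull-back $\Pi_\psi$ all depend on one another, so one must propagate estimates consistently between the $\mcc^{k,\alpha}$ bounds of the hyperbolic part and the $W_{\beta}^{s}$ bounds of the elliptic part across the corners of $\dot\Omega_+$ and along the free boundary --- this bookkeeping is the technical heart of Sections 4--5. Granting it, I would set $\widetilde\psi'=[v]/[p]$ from $G_4=0$ with the anchor just found (and recover $\widetilde\psi$ by integration), verify that $\mathcal{T}$ maps the ball $\set{\norm{\psi-\overline\xi_*}_{\mcc^{1,\alpha}}+\norm{\psi'}_{W_{\beta}^{1-1/\beta}(\Gamma_{\mr{s}})}\le C_{\mr{s}}\sigma}$ into itself and is a contraction there for $\sigma$ small, and apply the Banach fixed point theorem; unwinding $\Pi_\psi$ and the Lagrange transformation produces a solution of $\llbracket NSL\rrbracket$ obeying \eqref{eq:solution_Lagrange_estimates_shock}--\eqref{eq:solution_Lagrange_estimates_sub}.

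Finally, for the sharper comparison bounds \eqref{eq:solution_Lagrange_estimates_shock-1}--\eqref{eq:solution_Lagrange_estimates_sub-1} I would subtract the equations, the wall and exit conditions, and the R--H conditions satisfied by $\overline U_\pm+\dot U_\pm$ and the shock correction recovered from $\dot\psi'$, from those satisfied by the nonlinear solution, all read on the common configuration via $\Pi_\psi$. Since $\llbracket FL\rrbracket$ is the exact first-order linearization of $\llbracket NSL\rrbracket$ at $(\overline U_+,\overline U_-;\overline\psi')$ about the same $\overline\xi_*$, every residual created by this subtraction is genuinely quadratic in the $O(\sigma)$ quantities: the nonlinear Euler and R--H remainders, the $O(\sigma)$ shift of the anchor acting through the Jacobian of $\Pi_\psi$ on the $O(\sigma)$ solution, and the corresponding corrections on $\Gamma_{\mr{s}}$, are all $O(\sigma^2)$. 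Feeding these differences into the same hyperbolic, transport and elliptic estimates bounds them by $C\sigma^2$, hence by $\frac12\sigma^{3/2}$ as soon as $\sigma<\sigma_0$; the exponent $3/2$ rather than $2$ merely supplies room to absorb the constants $C,C_{\mr{s}},C_{\mr{s}}^{\pm}$.
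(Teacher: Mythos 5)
Your proposal is correct and follows essentially the same route as the paper: solve the supersonic flow on the whole nozzle by quasilinear hyperbolic theory, pull the subsonic domain back to a fixed rectangle via $\Pi_{\psi}$, decompose into transport equations plus a first-order elliptic system whose scalar solvability condition (the nonlinear analogue of \eqref{eq:first_approx_criterion_equation-1}) fixes the anchor $\xi_{*}=\psi(1)$ through the implicit function theorem using $R'(\overline{\xi}_{*})=-\dot{K}\Theta(\overline{\xi}_{*})\neq0$, and close by a contraction argument in the $\tfrac12\sigma^{3/2}$-ball around the $\llbracket FL\rrbracket$ solution, with all residuals quadratic in $\sigma$. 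The only cosmetic difference is that you iterate on $\psi$ alone with an inner loop for the transport--elliptic coupling, whereas the paper iterates on the pair $\left(\delta U;\delta\psi'\right)$ in one map $\mcj_{s}$; the substance is identical.
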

Two remarks are in order.
\begin{rem}
\label{rem:main_thm_L_regularity}

The regularity of the subsonic flow field behind the shock-front can
be raised to $\mcc^{1,\alpha}$ locally at any point in $\Omega_{+}$
except its corners. Indeed, it has been showed in \cite{XinYanYin2011ARMA}
that it cannot be $\mcc^{1}$ in general at the intersection point
between the shock-front and the nozzle wall. By applying similar arguments
as in \cite{LiXinYin2013ARMA}, the global regularity can be improved to be Lipschitz continuous. The regularity of the shock-front $\Gamma_{\mr{s}}$
can also be improved to be $\mcc^{2,\alpha}$ locally except two intersection
points between the shock-front and the nozzle walls.
\end{rem}
\medskip{}

\begin{rem}
\label{rem:multi_shock_solu}

For a expanding nozzle or a contracting nozzle ( see Figure \ref{fig:Nozzle_Div})
, as \eqref{eq:criterion_range_pressure_exit-1} holds, only one shock
solution can be established via the argument in this paper because
there exists only one solution $\overline{\xi}_{*}$ to the equation \eqref{eq:first_approx_criterion_equation-1}.
Nevertheless, the global uniqueness of such shock solution has not been proved. 

Moreover, for a general nozzle with both expanding and contracting
portions, there may exist multiple admissible shock solutions, all
satisfying the R-H conditions and the entropy conditions, depending
on the geometry of the nozzle wall ( see Figure \ref{fig:Nozzle_Wav}).
Indeed, if $\varphi_{\mr{w}}$ is not monotone, there may exists more
than one solutions $\overline{\xi}_{*}$ to the equation \eqref{eq:first_approx_criterion_equation-1}
with $\Theta(\overline{\xi}_{*})\not=0$, then for each such a solution $\overline{\xi}_{*}$,
there exists a shock solution to the free boundary problem $\llbracket NSL\rrbracket $
with the shock-front close to $\overline{\xi}_{*}$. A typical example
is, for any $k=1,2,\cdots$, let
\[
\Theta\left(x\right)\defs\Theta_{k}\left(x\right)=\sin^{2}\left(\frac{k\pi}{L}x\right),\qquad0<\xi<L,
\]
for which there exist $2k$ solutions $x_{*}^{j}\in(0,L),\ (j=1,2,\cdots,2k)$
to the equation \eqref{eq:first_approx_criterion_equation-1}, as
long as the given pressure function $P$ satisfies the range condition
\eqref{eq:criterion_range_pressure_exit-1}. Then for each $x_{*}^{j}$,
as long as $\Theta_{k}\left(x_{*}^{j}\right)\not=0$, an admissible
shock solution, satisfying the entropy condition, can be established
such that the shock-front $F_{\mr{s}}\defs\set{x=\varphi_{\mr{s}}(y)}$
lies near $x_{*}^{j}$:
\begin{equation}
\abs{\varphi_{\mr{s}}(Y_{\mr{s}})-x_{*}^{j}}\leq C_{*}^{j}\sigma,\label{eq:shock_front_location_estimate_multi}
\end{equation}
where $Y_{\mr{s}}$ is a solution to the following system: 
\[
\begin{cases}
y=\varphi_{\mr{w}}\left(x\right),\\
x=\varphi_{\mr{s}}\left(y\right),
\end{cases}
\]
and the constant $C_{*}^{j}$ depends on $\overline{U}_{-}$, $\overline{U}_{+}$,
$L$, and ${\displaystyle \frac{1}{\abs{\Theta(x_{*}^{j})}}}$. This implies that, for general nozzles, there would be no global uniqueness of the shock
solutions to the free boundary problem $\llbracket NS\rrbracket $. %
\end{rem}

\section{The Initial Approximating Location of the Shock-Front}

In this section, the linearized free boundary problem $\llbracket FL\rrbracket$
will be solved and the existence of the initial approximating locations
of the shock-front will be established.

\subsection{The solution $\dot{U}_{-}$ in $\Omega$}

In the free boundary problem $\llbracket FL\rrbracket$, it is easy to solve $\dot{U}_{-}$ in $\Omega$
since it satisfies a hyperbolic system with constant efficients. And
one obtains immediately the unique existence of the solutions by employing
the classical theory.
\begin{lem}
\label{lem:first_approx_super}

There exists a unique solution $\dot{U}_{-}$ which satisfies the
linearized equations \eqref{eq:first_approx_super_1}—\eqref{eq:first_approx_super_4}
in $\Omega$, the boundary conditions \eqref{eq:first_approx_super_bc-1}
on $\Gamma_{1}$, \eqref{eq:first_approx_super_bc-2} on $\Gamma_{2}$,
and \eqref{eq:first_approx_super_bc-4} on $\Gamma_{4}$. 

Moreover, in the whole domain $\Omega$, it holds that
\begin{align}
 & \dot{S}_{-}=0,\label{eq:first_approx_super_solution_1}\\
 & \dot{p}_{-}+\overline{\rho}_{-}\overline{q}_{-}\dot{q}_{-}=0,\label{eq:first_approx_super_solution_2}
\end{align}
and
\begin{eqnarray}
\norm{\dot{U}_{-}}_{\C^{2,\alpha}(\Omega)} & \leq & \dot{C}_{L}\norm{\dot{\Theta}_{N}}_{\C^{2,\alpha}(\Gamma_{4})}\leq\dot{C}_{L}\sigma,\label{eq:first_approx_super_estimate}
\end{eqnarray}
where $\dot{C}_{L}$ is a constant depending on $\overline{U}_{-}$
and $L$.

Finally, for any $\xi\in\left(0,L\right)$,
\begin{equation}
\frac{1}{\overline{\rho}_{-}\overline{q}_{-}}\cdot\frac{1-\overline{M}_{-}^{2}}{\overline{\rho}_{-}\overline{q}_{-}^{2}}\cdot\int_{0}^{1}\dot{p}_{-}\left(\xi,\eta\right)\dif\eta=\int_{0}^{\xi}\dot{\Theta}_{N}\left(\tau\right)\dif\tau=\sigma\int_{0}^{\xi}\Theta\left(\tau\right)\dif\tau.\label{eq:first_approx_super_int_p}
\end{equation}
\end{lem}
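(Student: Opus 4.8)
The plan is to read the pair \eqref{eq:first_approx_super_1}--\eqref{eq:first_approx_super_2} as a linear, constant-coefficient, strictly hyperbolic $2\times2$ initial--boundary value problem on the rectangle $\Omega$, with $\xi$ in the role of the evolution (``time'') variable. Setting $V\defs(\dot p_{-},\dot\theta_{-})^{\top}$ and $\kappa_{-}\defs\frac{1}{\overline\rho_{-}\overline q_{-}}\cdot\frac{1-\overline M_{-}^{2}}{\overline\rho_{-}\overline q_{-}^{2}}$, which is strictly negative because $\overline M_{-}>1$, these two equations are equivalent to $\partial_{\xi}V=A\,\partial_{\eta}V$ with $A=\begin{bmatrix}0 & \kappa_{-}^{-1}\\ -\overline q_{-}^{-1} & 0\end{bmatrix}$, whose eigenvalues $\pm\mu$, with $\mu\defs\bigl((-\kappa_{-})\,\overline q_{-}\bigr)^{-1/2}>0$, are real and distinct. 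First I would record the Riemann invariants $w_{\pm}\defs\dot p_{-}\mp\mu\overline q_{-}\dot\theta_{-}$, which are transported along $\frac{\dif\eta}{\dif\xi}=\mp\mu$; this shows that each of the lateral walls $\Gamma_{2}$ and $\Gamma_{4}$ receives exactly one incoming characteristic and that the single prescribed Dirichlet-type condition on $\dot\theta_{-}$ there is non-characteristic, meaning it determines the incoming invariant from the outgoing one together with the wall datum. Hence \eqref{eq:first_approx_super_bc-2}--\eqref{eq:first_approx_super_bc-4}, together with the zero Cauchy data \eqref{eq:first_approx_super_bc-1} on $\Gamma_{1}$, constitute a well-posed problem. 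For the $\C^{2,\alpha}$ regularity one needs corner compatibility at $(0,0)$ and $(0,1)$: at $(0,0)$ every datum vanishes, while at $(0,1)$ the wall datum $\dot\Theta_{N}=\sigma\Theta$ vanishes together with its first two derivatives by \eqref{eq:upper_bdry_compatibility_2nd}, which is precisely the set of compatibility relations up to second order. Invoking the classical existence and uniqueness theory for linear hyperbolic IBVP (see \cite{LiYu1985}) then gives a unique $\C^{2,\alpha}$ solution $(\dot p_{-},\dot\theta_{-})$ on the whole of $\Omega$ — on $\Omega$, not merely on $\dot\Omega_{-}$, precisely because the system evolves in $\xi$ and $\sigma\Theta$ is prescribed on the entire interval $[0,L]$ — together with $\norm{(\dot p_{-},\dot\theta_{-})}_{\C^{2,\alpha}(\Omega)}\le C\norm{\dot\Theta_{N}}_{\C^{2,\alpha}(\Gamma_{4})}$, the estimate being immediate from linearity and the a priori bounds of that theory.

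The remaining components are then recovered algebraically. Integrating \eqref{eq:first_approx_super_4} in $\xi$ and using $\dot S_{-}|_{\Gamma_{1}}=0$ yields $\dot S_{-}\equiv0$, which is \eqref{eq:first_approx_super_solution_1}. Since the coefficients in \eqref{eq:first_approx_super_3} are constant, that equation reads $\partial_{\xi}\bigl(\overline\rho_{-}\overline q_{-}\dot q_{-}+\dot p_{-}\bigr)=0$, and this combination vanishes on $\Gamma_{1}$; hence $\dot p_{-}+\overline\rho_{-}\overline q_{-}\dot q_{-}\equiv0$, which is \eqref{eq:first_approx_super_solution_2}, and in particular $\dot q_{-}=-(\overline\rho_{-}\overline q_{-})^{-1}\dot p_{-}$ inherits both the $\C^{2,\alpha}$ regularity and the size of $\dot p_{-}$. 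Because $\dot\Theta_{N}=\sigma\Theta$ with $\norm{\Theta}_{\mcc^{2+\alpha}}=1$, this delivers the full estimate \eqref{eq:first_approx_super_estimate}.

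Finally, for the integral identity \eqref{eq:first_approx_super_int_p} I would integrate equation \eqref{eq:first_approx_super_2} over $\eta\in(0,1)$: the term $\int_{0}^{1}\partial_{\eta}\dot\theta_{-}\,\dif\eta$ telescopes to $\dot\theta_{-}(\xi,1)-\dot\theta_{-}(\xi,0)=\dot\Theta_{N}(\xi)$ by the wall conditions \eqref{eq:first_approx_super_bc-2} and \eqref{eq:first_approx_super_bc-4}, while the remaining term equals $\kappa_{-}\frac{\dif}{\dif\xi}\int_{0}^{1}\dot p_{-}(\xi,\eta)\,\dif\eta$; integrating once more in $\xi$ from $0$ and discarding the constant of integration by $\dot p_{-}(0,\cdot)=0$ produces exactly \eqref{eq:first_approx_super_int_p}, since $\kappa_{-}$ is precisely the coefficient appearing there. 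This last step uses only \eqref{eq:first_approx_super_2} and the boundary data, so it is independent of having the solution in closed form.

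The only point demanding genuine care rather than bookkeeping is the verification that the lateral condition on $\dot\theta_{-}$ is admissible for the hyperbolic IBVP and that the second-order vanishing of $\Theta$ at the entrance supplies exactly the corner compatibility needed for the claimed $\C^{2,\alpha}$ regularity; both reduce to inspecting the characteristic structure written above. Everything else is standard linear hyperbolic theory, and the substantive difficulties of the scheme — the solvability/location condition and the elliptic estimates for the subsonic approximation $\dot U_{+}$ — are postponed to the following subsections.
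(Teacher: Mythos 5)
Your proposal is correct and follows essentially the same route as the paper: the existence, uniqueness, $\mcc^{2,\alpha}$ bound, and the identities \eqref{eq:first_approx_super_solution_1}--\eqref{eq:first_approx_super_solution_2} come from the classical linear hyperbolic theory (with the compatibility conditions \eqref{eq:upper_bdry_compatibility_2nd} at the corners, exactly as you note), and the key identity \eqref{eq:first_approx_super_int_p} is obtained by integrating \eqref{eq:first_approx_super_2} over the rectangle using the wall data. The paper phrases this last step via a potential $\phi_{-}$ with $\partial_{\xi}\phi_{-}=\dot{\theta}_{-}$ and $\partial_{\eta}\phi_{-}$ proportional to $\dot{p}_{-}$, but that is computationally identical to your direct integration in $\eta$ followed by integration in $\xi$.
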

\begin{proof}
It suffices to prove \eqref{eq:first_approx_super_int_p}.

By \eqref{eq:first_approx_super_2}, there exists a function $\phi_{-}$,
such that
\[
\partial_{\xi}\phi_{-}=\dot{\theta}_{-},\qquad\partial_{\eta}\phi_{-}=\frac{1}{\overline{\rho}_{-}\overline{q}_{-}}\cdot\frac{1-\overline{M}_{-}^{2}}{\overline{\rho}_{-}\overline{q}_{-}^{2}}\cdot\dot{p}_{-}.
\]
Without loss of generality, one may assume $\phi_{-}\left(0,0\right)=0$.
Then the boundary conditions \eqref{eq:first_approx_super_bc-1} yield
\begin{equation}
\phi_{-}\left(0,\eta\right)=0,\qquad\partial_{\xi}\phi_{-}\left(0,\eta\right)=0.\label{eq:first_approx_super_wave_initial}
\end{equation}
Moreover, the boundary conditions \eqref{eq:first_approx_super_bc-2}
and \eqref{eq:first_approx_super_bc-4} become
\begin{align}
 & \phi_{-}=0, & \text{ on } & \Gamma_{2}\label{eq:first_approx_super_wave_bdry_lower}\\
 & \phi_{-}=\int_{0}^{\xi}\dot{\Theta}_{N}\left(\tau\right)\dif\tau. & \text{ on } & \Gamma_{4}\label{eq:first_approx_super_wave_bdry_upper}
\end{align}
Hence, for any $\xi\in\left(0,L\right)$, 
\begin{eqnarray*}
\frac{1}{\overline{\rho}_{-}\overline{q}_{-}}\cdot\frac{1-\overline{M}_{-}^{2}}{\overline{\rho}_{-}\overline{q}_{-}^{2}}\cdot\int_{0}^{1}\dot{p}_{-}\left(\xi,\eta\right)\dif\eta & = & \int_{0}^{1}\partial_{\eta}\phi_{-}\left(\xi,\eta\right)\dif\eta\\
 & = & \phi_{-}\left(\xi,1\right)-\phi_{-}\left(\xi,0\right)\\
 & = & \int_{0}^{\xi}\dot{\Theta}_{N}\left(\tau\right)\dif\tau,
\end{eqnarray*}
that is, \eqref{eq:first_approx_super_int_p} holds.
\end{proof}

\subsection{Reformulation of the linearized boundary conditions \eqref{eq:first_approx_RH_1}}

With $\dot{U}_{-}$ being determined, one then need to determine $\dot{U}_{+}$
and $\overline{\xi}_{*}$ with the prescribed information in the free boundary problem $\llbracket FL\rrbracket$. 
By Lemma \ref{lem:first_approx_RH_reformulated}, the linearized boundary conditions \eqref{eq:first_approx_RH_1} can be rewritten as
\begin{equation}
B_{s}\cdot\begin{bmatrix}\dot{p}_{+}\\
\dot{q}_{+}\\
\dot{S}_{+}
\end{bmatrix}=\begin{bmatrix}\dot{g}_{1}\\
\dot{g}_{2}\\
\dot{g}_{3}
\end{bmatrix},\label{eq:first_approx_RH_matrix}
\end{equation}
where $\dot{g}_{j}\defs-\beta_{j}^{-}\cdot\dot{U}_{-}$, $(j=1,2,3)$,
and the coefficient matrix is given by
\[
B_{s}\defs\frac{1}{\overline{\rho}_{+}\overline{q}_{+}}\left[\overline{p}\right]\cdot
\begin{bmatrix}-\displaystyle\frac{1}{\overline{\rho}_{+}\overline{c}_{+}^{2}} & -\displaystyle\frac{1}{\overline{q}_{+}} & \displaystyle\frac{1}{\gamma c_{\mr{v}}}\\
1-\displaystyle\frac{\overline{p}_{+}}{\overline{\rho}_{+}\overline{c}_{+}^{2}} & \overline{\rho}_{+}\overline{q}_{+}-\displaystyle\frac{\overline{p}_{+}}{\overline{q}_{+}} & \displaystyle\frac{\overline{p}_{+}}{\gamma c_{\mr{v}}}\\
\displaystyle\frac{\overline{q}_{+}}{\left[\overline{p}\right]} & \displaystyle\frac{\overline{\rho}_{+}\overline{q}_{+}^{2}}{\left[\overline{p}\right]} & \displaystyle\frac{1}{\left(\gamma-1\right)c_{\mr{v}}}\cdot\overline{p}_{+}\cdot\frac{\overline{q}_{+}}{\left[\overline{p}\right]}
\end{bmatrix}.
\]
Clearly, the following lemma holds true.
\begin{lem}
\label{lem:first_approx_RH_reformulated}

It follows from the boundary conditions \eqref{eq:first_approx_RH_matrix} that
\begin{eqnarray}
\det B_{s} & = & \frac{1}{\left(\gamma-1\right)c_{\mr{v}}}\cdot\frac{\left[\overline{p}\right]^{2}\cdot\overline{p}_{+}}{\left(\overline{\rho}_{+}\overline{q}_{+}\right)^{3}}\cdot\left(1-\overline{M}_{+}^{2}\right)\not=0,\qquad\text{as }\overline{M}_{+}\not=1.\label{eq:first_approx_RH_det}\\
\dot{p}_{+} & = & \dot{g}_{1}^{\#}\defs\frac{\overline{\rho}_{+}\overline{q}_{+}^{2}}{\overline{M}_{+}^{2}-1}\cdot\frac{\overline{M}_{-}^{2}-1}{\overline{\rho}_{-}\overline{q}_{-}^{2}}\cdot\dot{p}_{-}\left(1-\dot{K}\right),\label{eq:first_approx_p}\\
\dot{q}_{+} & = & \dot{g}_{2}^{\#}\defs\frac{\overline{M}_{-}^{2}-1}{\overline{\rho}_{-}\overline{q}_{-}^{2}}\cdot\dot{p}_{-}\left\{ \left[\overline{p}\right]-\frac{\overline{\rho}_{+}\overline{q}_{+}^{2}}{\overline{M}_{+}^{2}-1}\left(1-\dot{K}\right)\right\} ,\label{eq:first_approx_q}\\
\dot{S}_{+} & = & \dot{g}_{3}^{\#}\defs-\frac{\left(\gamma-1\right)c_{\mr{v}}}{\overline{p}_{+}}\cdot\frac{\overline{M}_{-}^{2}-1}{\overline{\rho}_{-}\overline{q}_{-}^{2}}\cdot\dot{p}_{-}\left[\overline{p}\right].\label{eq:first_approx_S}
\end{eqnarray}
with $\dot{K}\defs\left[\overline{p}\right]\cdot\left(\displaystyle\frac{\gamma-1}{\gamma\overline{p}_{+}}+\frac{1}{\overline{\rho}_{+}\overline{q}_{+}^{2}}\right)$.
\end{lem}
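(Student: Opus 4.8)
The plan is to prove the lemma by a direct computation, exploiting the explicit forms of the coefficient vectors $\beta_j^{\pm}$ from Lemma~\ref{lem:first_approx_RH_coefficients} together with the background Rankine--Hugoniot relations \eqref{eq:RH_Lagrange_bg_1}--\eqref{eq:RH_Lagrange_bg_3}, the normalization \eqref{eq:total_mass}, and the polytropic equation of state \eqref{eq:polytropic} (in particular $\overline{c}_{\pm}^{2}=\gamma\overline{p}_{\pm}/\overline{\rho}_{\pm}$). First I would compute $\det B_{s}$. Since $B_{s}$ equals $(\overline{\rho}_{+}\overline{q}_{+})^{-1}[\overline{p}]$ times a $3\times3$ matrix whose first two rows are $\overline{\rho}_{+}\overline{q}_{+}/[\overline{p}]$ times the $(p_{+},q_{+},S_{+})$-components of $\beta_{1}^{+},\beta_{2}^{+}$ and whose third row is $\overline{q}_{+}/[\overline{p}]$ times those of $\beta_{3}^{+}$, a cofactor expansion (say along the last column), combined with the identity $\overline{c}_{+}^{2}=\gamma\overline{p}_{+}/\overline{\rho}_{+}$ to eliminate $\overline{c}_{+}$, produces systematic cancellations; the surviving terms assemble into $(1-\overline{M}_{+}^{2})$ times the prefactor displayed in \eqref{eq:first_approx_RH_det}. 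In particular $\det B_{s}\neq0$ exactly when $\overline{M}_{+}\neq1$, so the $3\times3$ system \eqref{eq:first_approx_RH_matrix} is uniquely solvable at the subsonic state $\overline{U}_{+}$.

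Next I would reduce the right-hand side $\dot{g}_{j}=-\beta_{j}^{-}\cdot\dot{U}_{-}$ to a scalar multiple of $\dot{p}_{-}$. By Lemma~\ref{lem:first_approx_super} the supersonic linearization satisfies $\dot{S}_{-}=0$ and $\dot{p}_{-}+\overline{\rho}_{-}\overline{q}_{-}\dot{q}_{-}=0$, i.e. $\dot{q}_{-}=-\dot{p}_{-}/(\overline{\rho}_{-}\overline{q}_{-})$; moreover $\dot{\theta}_{-}$ does not enter $\beta_{j}^{-}$ since the second component of each $\beta_{j}^{-}$ vanishes. Substituting the expressions for $\beta_{j}^{-}$ from Lemma~\ref{lem:first_approx_RH_coefficients} and using once more $\overline{c}_{-}^{2}=\gamma\overline{p}_{-}/\overline{\rho}_{-}$ together with $\overline{\rho}_{-}\overline{q}_{-}=1$, each $\dot{g}_{j}$ collapses to $\dot{p}_{-}$ times an explicit constant; one checks, for instance, $\dot{g}_{1}=-[\overline{p}]\,\frac{\overline{M}_{-}^{2}-1}{\overline{\rho}_{-}\overline{q}_{-}^{2}}\,\dot{p}_{-}$, and similarly for $\dot{g}_{2},\dot{g}_{3}$, so that the common factor $\frac{\overline{M}_{-}^{2}-1}{\overline{\rho}_{-}\overline{q}_{-}^{2}}\,\dot{p}_{-}$ appearing in \eqref{eq:first_approx_p}--\eqref{eq:first_approx_S} arises naturally.

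Finally I would solve \eqref{eq:first_approx_RH_matrix} by Cramer's rule --- or, more economically, by first eliminating $\dot{S}_{+}$ using the third row (which yields \eqref{eq:first_approx_S} at once) and then $\dot{q}_{+}$ --- substitute the scalar forms of $\dot{g}_{1},\dot{g}_{2},\dot{g}_{3}$, and simplify. Here the background momentum relation \eqref{eq:RH_Lagrange_bg_2} and Bernoulli relation \eqref{eq:RH_Lagrange_bg_3} are what merge the cross terms: they turn the combination $\frac{\gamma-1}{\gamma\overline{p}_{+}}+\frac{1}{\overline{\rho}_{+}\overline{q}_{+}^{2}}$ into the single constant $\dot{K}$ and allow the factor $(1-\dot{K})$ to be pulled out of the $\dot{p}_{+}$ and $\dot{q}_{+}$ formulas, giving \eqref{eq:first_approx_p} and \eqref{eq:first_approx_q}.

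I expect the main obstacle to be purely the bookkeeping in this last simplification: there are many terms of the shapes $\overline{p}_{\pm}/(\overline{\rho}_{\pm}\overline{c}_{\pm}^{2})$, $\overline{\rho}_{\pm}\overline{q}_{\pm}$, $\overline{p}_{\pm}/\overline{q}_{\pm}$, and one must repeatedly and consistently invoke $\overline{c}_{\pm}^{2}=\gamma\overline{p}_{\pm}/\overline{\rho}_{\pm}$, $\overline{\rho}_{\pm}\overline{q}_{\pm}=1$, and the three background R--H identities to see that everything collapses to the clean closed forms \eqref{eq:first_approx_RH_det}--\eqref{eq:first_approx_S}. No conceptual difficulty is involved; the algebra simply has to be organized carefully --- for example by treating the Mach numbers $\overline{M}_{\pm}^{2}$ as the fundamental unknowns --- so that the cancellations can be tracked without error.
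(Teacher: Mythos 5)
Your proposal is correct and follows essentially the same route as the paper: reduce $\dot{g}_{j}=-\beta_{j}^{-}\cdot\dot{U}_{-}$ to multiples of $\dot{p}_{-}$ via $\dot{S}_{-}=0$ and $\dot{p}_{-}+\overline{\rho}_{-}\overline{q}_{-}\dot{q}_{-}=0$, then solve the $3\times3$ system by elimination using $\overline{c}_{\pm}^{2}=\gamma\overline{p}_{\pm}/\overline{\rho}_{\pm}$ and $\overline{\rho}_{+}\overline{q}_{+}=\overline{\rho}_{-}\overline{q}_{-}$. The only minor imprecision is that the third (Bernoulli) row does not give $\dot{S}_{+}$ ``at once'': one must first extract $\dot{p}_{+}+\overline{\rho}_{+}\overline{q}_{+}\dot{q}_{+}$ from the first two rows, exactly as the paper does, before the third row isolates $\dot{S}_{+}$.
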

\begin{proof}
By Lemma \ref{lem:first_approx_RH_coefficients}, the boundary conditions
\eqref{eq:first_approx_RH_1} for $j=1$ can be written as 
\begin{align*}
 & \frac{1}{\overline{\rho}_{+}^{2}\overline{c}_{+}^{2}\overline{q}_{+}}\dot{p}_{+}+\frac{1}{\overline{\rho}_{+}\overline{q}_{+}^{2}}\dot{q}_{+}-\frac{1}{\gamma c_{\mr{v}}}\cdot\frac{1}{\overline{\rho}_{+}\overline{q}_{+}}\dot{S}_{+}\\
= & \frac{1}{\overline{\rho}_{-}^{2}\overline{c}_{-}^{2}\overline{q}_{-}}\dot{p}_{-}+\frac{1}{\overline{\rho}_{-}\overline{q}_{-}^{2}}\dot{q}_{-}-\frac{1}{\gamma c_{\mr{v}}}\cdot\frac{1}{\overline{\rho}_{-}\overline{q}_{-}}\dot{S}_{-},
\end{align*}
which yield, by  $\overline{\rho}_{+}\overline{q}_{+}=\overline{\rho}_{-}\overline{q}_{-}$,
\eqref{eq:first_approx_super_solution_1}, and \eqref{eq:first_approx_super_solution_2},
that
\begin{eqnarray}
\frac{1}{\overline{\rho}_{+}\overline{c}_{+}^{2}}\dot{p}_{+}+\frac{1}{\overline{q}_{+}}\dot{q}_{+}-\frac{1}{\gamma c_{\mr{v}}}\dot{S}_{+} & = & \frac{1}{\overline{\rho}_{-}\overline{c}_{-}^{2}}\dot{p}_{-}+\frac{1}{\overline{q}_{-}}\dot{q}_{-}\nonumber \\
 & = & \frac{1}{\overline{\rho}_{-}\overline{q}_{-}^{2}}\left(\overline{M}_{-}^{2}-1\right)\dot{p}_{-}.\label{eq:first_approx_RH_reduced_1}
\end{eqnarray}

Similarly, the boundary condition \eqref{eq:first_approx_RH_1} for $j=2$ leads to
\begin{align*}
 & \left(\dot{p}_{+}+\overline{\rho}_{+}\overline{q}_{+}\dot{q}_{+}\right)-\overline{p}_{+}\left(\frac{1}{\overline{\rho}_{+}\overline{c}_{+}^{2}}\dot{p}_{+}+\frac{1}{\overline{q}_{+}}\dot{q}_{+}-\frac{1}{\gamma c_{\mr{v}}}\dot{S}_{+}\right)\\
= & \left(\dot{p}_{-}+\overline{\rho}_{-}\overline{q}_{-}\dot{q}_{-}\right)-\overline{p}_{-}\left(\frac{1}{\overline{\rho}_{-}\overline{c}_{-}^{2}}\dot{p}_{-}+\frac{1}{\overline{q}_{-}}\dot{q}_{-}\right).
\end{align*}
Then, combining \eqref{eq:first_approx_super_solution_2} with \eqref{eq:first_approx_RH_reduced_1} yields
\begin{equation}
\dot{p}_{+}+\overline{\rho}_{+}\overline{q}_{+}\dot{q}_{+}=\frac{1}{\overline{\rho}_{-}\overline{q}_{-}^{2}}\left(\overline{M}_{-}^{2}-1\right)\dot{p}_{-}\cdot\left(\overline{p}_{+}-\overline{p}_{-}\right).\label{eq:first_approx_RH_reduced_2}
\end{equation}

The boundary condition \eqref{eq:first_approx_RH_1} for $j=3$ reads
\begin{align*}
 & \frac{1}{\overline{\rho}_{+}}\dot{p}_{+}+\overline{q}_{+}\dot{q}_{+}+\frac{1}{\left(\gamma-1\right)c_{\mr{v}}}\cdot\frac{\overline{p}_{+}}{\overline{\rho}_{+}}\dot{S}_{+}\\
= & \frac{1}{\overline{\rho}_{-}}\dot{p}_{-}+\overline{q}_{-}\dot{q}_{-}+\frac{1}{\left(\gamma-1\right)c_{\mr{v}}}\cdot\frac{\overline{p}_{-}}{\overline{\rho}_{-}}\dot{S}_{-},
\end{align*}
which yields, by \eqref{eq:first_approx_super_solution_1}
and \eqref{eq:first_approx_super_solution_2} again, that
\[
\left(\dot{p}_{+}+\overline{\rho}_{+}\overline{q}_{+}\dot{q}_{+}\right)+\frac{1}{\left(\gamma-1\right)c_{\mr{v}}}\cdot\overline{p}_{+}\cdot\dot{S}_{+}=\frac{\overline{\rho}_{+}}{\overline{\rho}_{-}}\left(\dot{p}_{-}+\overline{\rho}_{-}\overline{q}_{-}\dot{q}_{-}\right)=0.
\]
Hence, this and \eqref{eq:first_approx_RH_reduced_2} yield that
\begin{equation}
\frac{1}{c_{\mr{v}}}\cdot\dot{S}_{+}=-\left(\gamma-1\right)\cdot\frac{\overline{M}_{-}^{2}-1}{\overline{\rho}_{-}\overline{q}_{-}^{2}}\cdot\frac{\left[\overline{p}\right]}{\overline{p}_{+}}\dot{p}_{-},\label{eq:first_approx_RH_reduced_3}
\end{equation}
which is \eqref{eq:first_approx_S} exactly. 

Then, substituting \eqref{eq:first_approx_RH_reduced_3} into \eqref{eq:first_approx_RH_reduced_1}
gives
\begin{eqnarray*}
\frac{1}{\overline{\rho}_{+}\overline{c}_{+}^{2}}\dot{p}_{+}+\frac{1}{\overline{q}_{+}}\dot{q}_{+} & = & \frac{1}{\gamma c_{\mr{v}}}\dot{S}_{+}+\frac{1}{\overline{\rho}_{-}\overline{q}_{-}^{2}}\left(\overline{M}_{-}^{2}-1\right)\cdot\dot{p}_{-}\\
 & = & \left(1-\frac{\gamma-1}{\gamma}\cdot\frac{\left[\overline{p}\right]}{\overline{p}_{+}}\right)\cdot\frac{\overline{M}_{-}^{2}-1}{\overline{\rho}_{-}\overline{q}_{-}^{2}}\cdot\dot{p}_{-},
\end{eqnarray*}
which implies
\[
\frac{1}{\overline{\rho}_{+}\overline{q}_{+}^{2}}\left(\overline{M}_{+}^{2}\dot{p}_{+}+\overline{\rho}_{+}\overline{q}_{+}\dot{q}_{+}\right)=\left(1-\frac{\gamma-1}{\gamma}\cdot\frac{\left[\overline{p}\right]}{\overline{p}_{+}}\right)\cdot\frac{\overline{M}_{-}^{2}-1}{\overline{\rho}_{-}\overline{q}_{-}^{2}}\cdot\dot{p}_{-}.
\]
This, together with \eqref{eq:first_approx_RH_reduced_2}, gives
\begin{equation}
\frac{\overline{M}_{+}^{2}-1}{\overline{\rho}_{+}\overline{q}_{+}^{2}}\cdot\dot{p}_{+}=\frac{\overline{M}_{-}^{2}-1}{\overline{\rho}_{-}\overline{q}_{-}^{2}}\cdot\dot{p}_{-}\left\{ 1-\left[\overline{p}\right]\cdot\left(\frac{\gamma-1}{\gamma\overline{p}_{+}}+\frac{1}{\overline{\rho}_{+}\overline{q}_{+}^{2}}\right)\right\} ,\label{eq:first_approx_shock}
\end{equation}
which yields \eqref{eq:first_approx_p}.

Finally, substituting \eqref{eq:first_approx_p} into \eqref{eq:first_approx_RH_reduced_2}
shows \eqref{eq:first_approx_q}. Thus the lemma is proved.
\end{proof}

\subsection{Determine $\overline{\xi}_{*}$ and $\dot{U}_{+}$}

\textcolor{magenta}{}

Now we determine $\overline{\xi}_{*}$ and $\dot{U}_{+}$. It follows from
\eqref{eq:first_approx_sub_3} and \eqref{eq:first_approx_sub_4} that
\begin{align}
 & \left(\overline{q}_{+}\dot{q}_{+}+\frac{1}{\overline{\rho}_{+}}\dot{p}_{+}+\overline{T}_{+}\dot{S}_{+}\right)\Big|_{\left(\xi,\eta\right)}=\left(\overline{q}_{+}\dot{q}_{+}+\frac{1}{\overline{\rho}_{+}}\dot{p}_{+}+\overline{T}_{+}\dot{S}_{+}\right)\Big|_{\left(\overline{\xi}_{*},\eta\right)},\label{eq:first_approx_sub_hyper_3}\\
 & \dot{S}_{+}\left(\xi,\eta\right)=\dot{S}_{+}\left(\overline{\xi}_{*},\eta\right).\label{eq:first_approx_sub_hyper_4}
\end{align}
Thus, by Lemma \ref{lem:first_approx_RH_reformulated},
the left hand sides of \eqref{eq:first_approx_sub_hyper_3} and \eqref{eq:first_approx_sub_hyper_4}
are determined once $\overline{\xi}_{*}$ is known. Therefore, the key
step is determining $\overline{\xi}_{*}$ and $\left(\dot{p}_{+},\dot{\theta}_{+}\right)$
via the elliptic system of first order consisting in the equations
\eqref{eq:first_approx_sub_1}—\eqref{eq:first_approx_sub_2} as well
as the boundary conditions \eqref{eq:first_approx_sub_bc-3}—\eqref{eq:first_approx_sub_bc-4},
and \eqref{eq:first_approx_p}. Then Corollary \ref{lem:bvp_first_order_elliptic}
can be employed to yield the following lemma.
\begin{lem}
\label{lem:first_approx_criterion}

Given $\overline{\xi}_{*}\in(0,L)$, there exists a unique solution $\left(\dot{p}_{+},\dot{\theta}_{+}\right)$
to the boundary value problem consisting of the equations \eqref{eq:first_approx_sub_1}—\eqref{eq:first_approx_sub_2}
together with the boundary conditions \eqref{eq:first_approx_sub_bc-3}—\eqref{eq:first_approx_sub_bc-4},
and \eqref{eq:first_approx_p}, if and only if
\begin{equation}
\frac{1}{\overline{\rho}_{+}\overline{q}_{+}}\cdot\frac{1-\overline{M}_{+}^{2}}{\overline{\rho}_{+}\overline{q}_{+}^{2}}\cdot\int_{0}^{1}P\left(\eta\right)\dif\eta=\int_{0}^{L}\Theta\left(\xi\right)\dif\xi-\dot{K}\int_{0}^{\overline{\xi}_{*}}\Theta\left(\xi\right)\dif\xi,\label{eq:first_approx_criterion_exit_pressure}
\end{equation}
where 
\[
\dot{K}=\left[\overline{p}\right]\cdot\left(\frac{\gamma-1}{\gamma\overline{p}_{+}}+\frac{1}{\overline{\rho}_{+}\overline{q}_{+}^{2}}\right)>0.
\]
\end{lem}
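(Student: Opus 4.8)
The plan is to read the conditions defining $\dot{U}_{+}$ in the free boundary problem $\llbracket FL\rrbracket$ as a mixed boundary value problem for the first-order constant-coefficient elliptic system \eqref{eq:first_approx_sub_1}--\eqref{eq:first_approx_sub_2} on the rectangle $\dot{\Omega}_{+}$: $\dot{p}_{+}$ carries Dirichlet data on the two lateral sides ($\dot{p}_{+}=\sigma P$ on $\dot{\Gamma}_{3}$ by \eqref{eq:first_approx_sub_bc-3}, and $\dot{p}_{+}=\dot{g}_{1}^{\#}$ on $\dot{\Gamma}_{\mr{s}}$ by \eqref{eq:first_approx_p}), while $\dot{\theta}_{+}$ carries Dirichlet data on the two horizontal sides ($\dot{\theta}_{+}=0$ on $\dot{\Gamma}_{2}^{+}$, $\dot{\theta}_{+}=\sigma\Theta$ on $\dot{\Gamma}_{4}^{+}$). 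Note that, once $\overline{\xi}_{*}$ is fixed, the trace $\dot{g}_{1}^{\#}$ is completely determined: Lemma \ref{lem:first_approx_RH_reformulated} expresses it through $\dot{p}_{-}(\overline{\xi}_{*},\cdot)$, which is already known and as regular as $\dot{U}_{-}$ (hence lies in the space required by the appendix) by Lemma \ref{lem:first_approx_super}. With the data thus fixed, I would invoke Corollary \ref{lem:bvp_first_order_elliptic}: the mixed problem has a solution, and then a unique one, if and only if the single scalar compatibility condition furnished by that corollary holds; it then remains to identify this condition with \eqref{eq:first_approx_criterion_exit_pressure}.

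The compatibility condition is nothing but a flux balance. Setting $b\defs\frac{1}{\overline{\rho}_{+}\overline{q}_{+}}\cdot\frac{1-\overline{M}_{+}^{2}}{\overline{\rho}_{+}\overline{q}_{+}^{2}}>0$, equation \eqref{eq:first_approx_sub_2} says that the field $\left(-b\dot{p}_{+},\ \dot{\theta}_{+}\right)$ is divergence free on $\dot{\Omega}_{+}$, so that $\oint_{\partial\dot{\Omega}_{+}}\left(-b\dot{p}_{+},\dot{\theta}_{+}\right)\cdot\mbnu\,\dif s=0$ is necessary for solvability (and, by the appendix theory, also sufficient). Evaluating this integral edge by edge with the prescribed boundary data — the contribution of $\dot{\Gamma}_{2}^{+}$ vanishing since $\dot{\theta}_{+}=0$ there — puts the condition in the form
\[
b\,\sigma\int_{0}^{1}P(\eta)\,\dif\eta
= \sigma\int_{\overline{\xi}_{*}}^{L}\Theta(\xi)\,\dif\xi
+ b\int_{0}^{1}\dot{g}_{1}^{\#}(\eta)\,\dif\eta.
\]

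The last step is to compute the rightmost term. By \eqref{eq:first_approx_p}, $\dot{g}_{1}^{\#}(\eta)=\dot{p}_{+}(\overline{\xi}_{*},\eta)$ is a fixed constant times $\dot{p}_{-}(\overline{\xi}_{*},\eta)$, and by \eqref{eq:first_approx_super_int_p} in Lemma \ref{lem:first_approx_super} the $\eta$-average of $\dot{p}_{-}(\overline{\xi}_{*},\cdot)$ equals $\frac{\overline{\rho}_{-}^{2}\overline{q}_{-}^{3}}{1-\overline{M}_{-}^{2}}\,\sigma\int_{0}^{\overline{\xi}_{*}}\Theta(\tau)\,\dif\tau$. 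Multiplying out the constants and using $\overline{\rho}_{+}\overline{q}_{+}=\overline{\rho}_{-}\overline{q}_{-}$ together with the definition of $\dot{K}$ collapses $b\int_{0}^{1}\dot{g}_{1}^{\#}\,\dif\eta$ to $(1-\dot{K})\,\sigma\int_{0}^{\overline{\xi}_{*}}\Theta(\tau)\,\dif\tau$; substituting back, cancelling $\sigma$, and rewriting $\int_{\overline{\xi}_{*}}^{L}\Theta+(1-\dot{K})\int_{0}^{\overline{\xi}_{*}}\Theta=\int_{0}^{L}\Theta-\dot{K}\int_{0}^{\overline{\xi}_{*}}\Theta$ turns the compatibility condition into exactly \eqref{eq:first_approx_criterion_exit_pressure}, with $\dot{K}>0$ because $[\overline{p}]>0$. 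This settles both implications simultaneously. I expect the only genuinely delicate point to be the clean application of Corollary \ref{lem:bvp_first_order_elliptic} — verifying that $\sigma P$, $\sigma\Theta$ and $\dot{g}_{1}^{\#}$ meet the Sobolev-regularity hypotheses of the appendix (this is where the $\mcc^{2,\alpha}$ bound for $\dot{U}_{-}$ enters) and that the corner structure of $\dot{\Omega}_{+}$ introduces no solvability constraint beyond the single flux balance; the algebraic reduction itself is routine bookkeeping.
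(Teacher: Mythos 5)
Your proposal is correct and follows essentially the same route as the paper: identify the problem for $(\dot p_+,\dot\theta_+)$ as the mixed boundary value problem covered by Corollary \ref{lem:bvp_first_order_elliptic}, write out its solvability condition (the paper's \eqref{eq:criterion_compatibility_cond}, which is exactly your flux balance for the divergence-free field coming from \eqref{eq:first_approx_sub_2}), and then reduce the term $\int_0^1\dot g_1^{\#}\,\dif\eta$ via \eqref{eq:first_approx_p}, the relation $\overline{\rho}_{+}\overline{q}_{+}=\overline{\rho}_{-}\overline{q}_{-}$, and \eqref{eq:first_approx_super_int_p} to $(1-\dot K)\sigma\int_0^{\overline\xi_*}\Theta$. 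The algebra checks out and matches the paper's computation.
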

\begin{proof}
Due to Corollary \ref{lem:bvp_first_order_elliptic}, it suffices to show
that the solvability condition \eqref{eq:first_order_elliptic_exist_cond}
yields \eqref{eq:first_approx_criterion_exit_pressure}. Actually,
for the problem of the system \eqref{eq:first_approx_sub_1}—\eqref{eq:first_approx_sub_2}
with the boundary conditions \eqref{eq:first_approx_sub_bc-3}—\eqref{eq:first_approx_sub_bc-4},
and \eqref{eq:first_approx_p}, the solvability condition \eqref{eq:first_order_elliptic_exist_cond}
reads
\begin{equation}
0=\frac{1}{\overline{\rho}_{+}\overline{q}_{+}}\cdot\frac{1-\overline{M}_{+}^{2}}{\overline{\rho}_{+}\overline{q}_{+}^{2}}\int_{0}^{1}\left(\dot{p}_{+}\left(\overline{\xi}_{*},\eta\right)-\dot{P}_{e}\left(\eta\right)\right)\dif\eta+\int_{\overline{\xi}_{*}}^{L}\dot{\Theta}_{N}^{+}\left(\xi\right)\dif\xi,\label{eq:criterion_compatibility_cond}
\end{equation}
By \eqref{eq:first_approx_p}, and recalling that $\overline{\rho}_{+}\overline{q}_{+}=\overline{\rho}_{-}\overline{q}_{-}$,
one gets that
\begin{eqnarray*}
\frac{1}{\overline{\rho}_{+}\overline{q}_{+}}\cdot\frac{1-\overline{M}_{+}^{2}}{\overline{\rho}_{+}\overline{q}_{+}^{2}}\int_{0}^{1}\dot{p}_{+}\left(\overline{\xi}_{*},\eta\right)\dif\eta & = & \left(1-\dot{K}\right)\frac{1}{\overline{\rho}_{-}\overline{q}_{-}}\cdot\frac{1-\overline{M}_{-}^{2}}{\overline{\rho}_{-}\overline{q}_{-}^{2}}\cdot\int_{0}^{1}\dot{p}_{-}\left(\overline{\xi}_{*},\eta\right)\dif\eta\\
 & = & \left(1-\dot{K}\right)\int_{0}^{\overline{\xi}_{*}}\sigma\Theta\left(\tau\right)\dif\tau,
\end{eqnarray*}
where \eqref{eq:first_approx_super_int_p} has been used for the second
equality. Therefore, \eqref{eq:criterion_compatibility_cond} yields,
since $\dot{P}_{e}\left(\eta\right)=\sigma P\left(\eta\right)$ and
$\dot{\Theta}_{N}^{+}\left(\xi\right)=\sigma\Theta\left(\xi\right)$,
that
\[
\frac{1}{\overline{\rho}_{+}\overline{q}_{+}}\cdot\frac{1-\overline{M}_{+}^{2}}{\overline{\rho}_{+}\overline{q}_{+}^{2}}\int_{0}^{1}P\left(\eta\right)\dif\eta=\left(1-\dot{K}\right)\int_{0}^{\overline{\xi}_{*}}\Theta\left(\tau\right)\dif\tau+\int_{\overline{\xi}_{*}}^{L}\Theta\left(\xi\right)\dif\xi,
\]
which is the condition \eqref{eq:first_approx_criterion_exit_pressure} exactly.
\end{proof}
\begin{rem}
By Lemma \ref{lem:first_approx_criterion}, for the existence of the
solution $\left(\dot{p}_{+},\dot{\theta}_{+}\right)$, the unknown
constant $\overline{\xi}_{*}$, which determines the location of the free
boundary $\dot{\Gamma}_{s}$, should satisfy \eqref{eq:first_approx_criterion_exit_pressure}.
Hence, the value of $\overline{\xi}_{*}$ should be determined by the shape
of the nozzle described by $\Theta$ and the pressure $P$ prescribed
at the exit via the condition \eqref{eq:first_approx_criterion_exit_pressure}.
For any $\xi\in(0,L)$, define 
\begin{equation}
R\left(\xi\right)\defs\int_{0}^{L}\Theta\left(\tau\right)\dif\tau-\dot{K}\int_{0}^{\xi}\Theta\left(\tau\right)\dif\tau,\label{eq:first_approx_criterion_nozzle}
\end{equation}
and denote its infimum and supremum respectively by 
\begin{eqnarray*}
\underline{R} & \defs & \inf_{\xi\in\left(0,L\right)}R\left(\xi\right),\\
\overline{R} & \defs & \sup_{\xi\in\left(0,L\right)}R\left(\xi\right).
\end{eqnarray*}
Then the condition \eqref{eq:first_approx_criterion_exit_pressure}
can be regarded as an equation for $\overline{\xi}_{*}$ as
\begin{equation}
R(\overline{\xi}_{*})=\dot{P}_{*}\defs\frac{1}{\overline{\rho}_{+}\overline{q}_{+}}\cdot\frac{1-\overline{M}_{+}^{2}}{\overline{\rho}_{+}\overline{q}_{+}^{2}}\cdot\int_{0}^{1}P\left(\eta\right)\dif\eta,\label{eq:first_approx_criterion_equation}
\end{equation}
and obviously, it has at least a solution $\overline{\xi}_{*}\in[0,L]$
if and only if
\begin{equation}
\underline{R}\leq\dot{P}_{*}\leq\overline{R}.\label{eq:criterion_range_pressure}
\end{equation}
And as \eqref{eq:criterion_range_pressure} holds, any solution $\overline{\xi}_{*}\in(0,L)$
to the equation \eqref{eq:first_approx_criterion_equation} with
\begin{equation}
R'\left(\overline{\xi}_{*}\right)=-\dot{K}\Theta\left(\overline{\xi}_{*}\right)\not=0,\label{eq:first_approx_location_ncd}
\end{equation}
 can be taken as an initial approximating location of the shock-front
which will lead to a shock solution via a nonlinear iteration scheme as shown later.
\end{rem}
\begin{figure}[th]
\centering
\def\svgwidth{200pt}
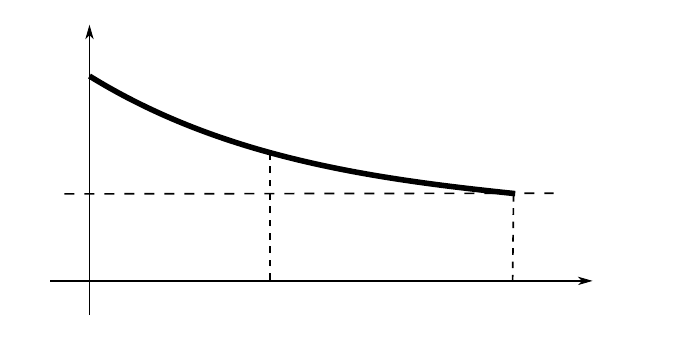

\caption{The initial approximating location of the shock-front for a expanding
finite nozzle.\label{fig:Criterion_Div}}
\end{figure}

\begin{lem}
\label{lem:first_approx_location_monotone}

Assume that for any $\xi\in(0,L)$, 
\begin{equation}
\Theta\left(\xi\right)\not=0,\qquad\text{ for any }\xi\in(0,L),\label{eq:nozzle_monotone}
\end{equation}
 which means that $\varphi_{\mr{w}}$ is a strictly monotone function
in $(0,L)$, and the nozzle is either expanding or contracting. Then
there exists a unique solution $\overline{\xi}_{*}\in(0,L)$ to the equation
\eqref{eq:first_approx_criterion_exit_pressure}, provided that \eqref{eq:criterion_range_pressure}
holds. Furthermore, the solution $\overline{\xi}_{*}$ satisfies \eqref{eq:first_approx_location_ncd}.
\end{lem}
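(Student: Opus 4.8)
The plan is to read off everything from the explicit formula \eqref{eq:first_approx_criterion_nozzle} for $R$. First I would note that, since $\Theta$ is continuous, $R(\xi)=\int_{0}^{L}\Theta(\tau)\,\dif\tau-\dot{K}\int_{0}^{\xi}\Theta(\tau)\,\dif\tau$ belongs to $\mcc^{1}[0,L]$ with
$R'(\xi)=-\dot{K}\,\Theta(\xi)$. The hypothesis \eqref{eq:nozzle_monotone} says $\Theta$ never vanishes on $(0,L)$; being continuous, the intermediate value theorem then forces $\Theta$ to keep one fixed sign throughout $(0,L)$. Together with $\dot{K}>0$ this shows $R'$ has a fixed sign, so $R$ is \emph{strictly} monotone on $[0,L]$ — strictly decreasing if $\Theta>0$ and strictly increasing if $\Theta<0$.

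Next I would identify the range of $R$. A strictly monotone continuous function on $[0,L]$ attains its extrema exactly at the two endpoints, so $\{\underline{R},\overline{R}\}=\{R(0),R(L)\}$ and $R$ is a continuous bijection of $[0,L]$ onto $[\underline{R},\overline{R}]$. Recalling that \eqref{eq:first_approx_criterion_equation} recasts the solvability condition \eqref{eq:first_approx_criterion_exit_pressure} as the scalar equation $R(\overline{\xi}_{*})=\dot{P}_{*}$, and that \eqref{eq:criterion_range_pressure} places $\dot{P}_{*}$ in $[\underline{R},\overline{R}]$, I conclude that there is exactly one $\overline{\xi}_{*}\in[0,L]$ solving \eqref{eq:first_approx_criterion_exit_pressure}: uniqueness is immediate from strict monotonicity, existence from the intermediate value theorem. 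The endpoint values $R(0),R(L)$ being precisely $\overline{R},\underline{R}$, the root lies in the open interval $(0,L)$ once $\dot{P}_{*}$ is an interior value of $[\underline{R},\overline{R}]$; I would state the conclusion for $\overline{\xi}_{*}\in(0,L)$ accordingly.

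Finally, for the last assertion I simply combine the two facts already in hand: since $\overline{\xi}_{*}\in(0,L)$, the hypothesis \eqref{eq:nozzle_monotone} gives $\Theta(\overline{\xi}_{*})\neq 0$, hence $R'(\overline{\xi}_{*})=-\dot{K}\,\Theta(\overline{\xi}_{*})\neq 0$ because $\dot{K}>0$, which is exactly \eqref{eq:first_approx_location_ncd}. There is no genuine obstacle in this lemma; its whole content is the elementary observation that a strictly monotone nozzle ($\Theta$ one-signed) turns $R$ into a bijection, so the solvability equation has a unique root. The only point that needs a little care is the placement of $\overline{\xi}_{*}$ relative to $0$ and $L$, which I would dispatch by recording that $\sup R$ and $\inf R$ are attained at the two endpoints.
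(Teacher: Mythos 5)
Your proposal is correct and follows essentially the same route as the paper: compute $R'(\xi)=-\dot{K}\Theta(\xi)$, use continuity plus the non-vanishing hypothesis to fix the sign of $\Theta$, conclude strict monotonicity of $R$ with extrema at the endpoints, and apply the intermediate value theorem; the paper merely writes the two sign cases out separately. You also correctly flag the one genuine subtlety — that placing $\overline{\xi}_{*}$ in the \emph{open} interval requires $\dot{P}_{*}$ to be an interior value of $[\underline{R},\overline{R}]$ — which is exactly how the paper handles it by passing to the strict inequalities \eqref{eq:criterion_range_pressure_monotone}.
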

\begin{proof}
Due to \eqref{eq:nozzle_monotone}, then it holds that either 
\begin{equation}
\Theta\left(\xi\right)>0,\qquad\text{for all }\xi\in\left(0,L\right),\label{eq:diverging_nozzle}
\end{equation}
such that the nozzle is expanding, or
\begin{equation}
\Theta\left(\xi\right)<0,\qquad\text{for all }\xi\in\left(0,L\right),\label{eq:converging_nozzle}
\end{equation}
such that the nozzle is contracting.

Note that
\[
R'\left(\xi\right)=-\dot{K}\Theta\left(\xi\right).
\]
Hence, in the case \eqref{eq:diverging_nozzle}, $R\left(\xi\right)$
is strictly decreasing, and 
\begin{eqnarray*}
\underline{R} & = & R\left(L\right)=\left(1-\dot{K}\right)\int_{0}^{L}\Theta\left(\xi\right)\dif\xi,\\
\overline{R} & = & R\left(0\right)=\int_{0}^{L}\Theta\left(\xi\right)\dif\xi.
\end{eqnarray*}
Moreover, for any $P(\eta)$ satisfying 
\begin{equation}
\underline{R}<\dot{P}_{*}<\overline{R},\label{eq:criterion_range_pressure_monotone}
\end{equation}
it is then clear that there exists a unique $\overline{\xi}_{*}\in(0,L)$
such that
\[
R\left(\overline{\xi}_{*}\right)=\dot{P}_{*}.
\]
Also, 
\[
R'\left(\overline{\xi}_{*}\right)=-\dot{K}\Theta\left(\overline{\xi}_{*}\right)<0.
\]

Similarly, in case that \eqref{eq:converging_nozzle} holds, $R\left(\xi\right)$
is strictly increasing, and 
\begin{eqnarray*}
\underline{R} & = & R\left(0\right)=\int_{0}^{L}\Theta\left(\xi\right)\dif\xi,\\
\overline{R} & = & R\left(L\right)=\left(1-\dot{K}\right)\int_{0}^{L}\Theta\left(\xi\right)\dif\xi.
\end{eqnarray*}
Then for any $P(\eta)$ satisfying \eqref{eq:criterion_range_pressure_monotone},
there exists a unique $\overline{\xi}_{*}\in(0,L)$ such that
\[
R\left(\overline{\xi}_{*}\right)=\dot{P}_{*},
\]
with 
\[
R'\left(\overline{\xi}_{*}\right)=-\dot{K}\Theta\left(\overline{\xi}_{*}\right)>0.
\]
\end{proof}
\begin{figure}[th]
\centering
\def\svgwidth{200pt}
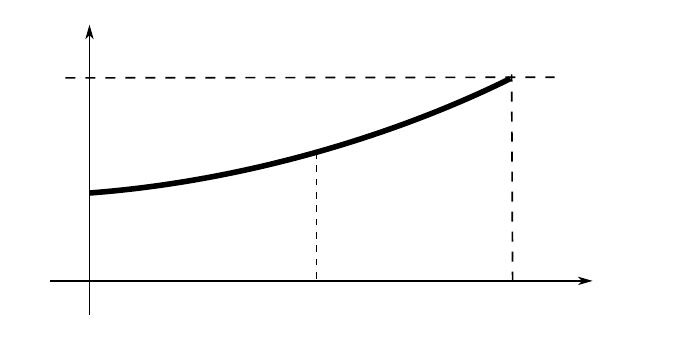

\caption{The initial approximating location of the shock-front for a contracting
finite nozzle.\label{fig:Criterion_Con}}
\end{figure}

Then, with $\overline{\xi}_{*}$ being so determined, by employing Corollary
\ref{lem:bvp_first_order_elliptic}, one can detemine $\dot{U}_{+}$
in $\dot{\Omega}_{+}$ immediately. 
\begin{lem}
\label{lem:first_approx_sub}

Assume that \eqref{eq:criterion_range_pressure} and \eqref{eq:diverging_nozzle}
hold.
Then there exists a unique solution $\dot{U}_{+}$ satisfying the
equations \eqref{eq:first_approx_sub_1}—\eqref{eq:first_approx_sub_4}
in $\dot{\Omega}_{+}$ as well as the boundary conditions \eqref{eq:first_approx_sub_bc-3}—\eqref{eq:first_approx_sub_bc-4},
and \eqref{eq:first_approx_p}—\eqref{eq:first_approx_S}. Furthermore, it holds that
\begin{eqnarray}
\norm{\dot{U}_{+}}_{(\dot{\Omega}_{+};\dot{\Gamma}_{\mr{s}})} & \leq & \dot{C}_{+}\left(\norm{\dot{\Theta}_{N}}_{\C^{2,\alpha}(\Gamma_{4})}+\norm{\dot{P}_{e}}_{\C^{2,\alpha}(\Gamma_{3})}\right)\leq\dot{C}_{+}\sigma,\label{eq:first_approx_sub_estimate}
\end{eqnarray}
with $\beta>2$, and the constant $\dot{C}_{+}$ depending on $\overline{U}_{+}$ and
$L$.
\end{lem}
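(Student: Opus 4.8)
The plan is to decouple the linearized system \eqref{eq:first_approx_sub_1}—\eqref{eq:first_approx_sub_4} into the first-order elliptic subsystem for $(\dot{p}_{+},\dot{\theta}_{+})$ and the transport equations for $(\dot{q}_{+},\dot{S}_{+})$, to solve the former by the mixed boundary value problem theory of Corollary \ref{lem:bvp_first_order_elliptic}, and to recover the latter by integrating along the streamlines $\eta=\mathrm{const}$. Throughout, $\overline{\xi}_{*}$ denotes the interior root of \eqref{eq:first_approx_criterion_equation}, which is unique under \eqref{eq:criterion_range_pressure} and \eqref{eq:diverging_nozzle} by Lemma \ref{lem:first_approx_location_monotone}.

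First I would solve the elliptic subsystem \eqref{eq:first_approx_sub_1}—\eqref{eq:first_approx_sub_2} in the rectangle $\dot{\Omega}_{+}=(\overline{\xi}_{*},L)\times(0,1)$ under the four boundary conditions $\dot{p}_{+}=\dot{P}_{e}$ on the exit $\dot{\Gamma}_{3}$, $\dot{p}_{+}=\dot{g}_{1}^{\#}$ on the free boundary $\dot{\Gamma}_{\mr{s}}$ --- which is a known datum, since $\dot{g}_{1}^{\#}$ is the explicit multiple of $\dot{p}_{-}$ given in \eqref{eq:first_approx_p} and $\dot{U}_{-}$ has been determined by Lemma \ref{lem:first_approx_super} --- together with $\dot{\theta}_{+}=0$ on $\dot{\Gamma}_{2}^{+}$ and $\dot{\theta}_{+}=\dot{\Theta}_{N}^{+}$ on $\dot{\Gamma}_{4}^{+}$. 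This is exactly the mixed boundary value problem for a constant-coefficient first-order elliptic system treated in the appendix. Because $\overline{\xi}_{*}$ has been chosen precisely so that the compatibility identity \eqref{eq:first_order_elliptic_exist_cond} holds --- this is the computation performed in the proof of Lemma \ref{lem:first_approx_criterion} --- Corollary \ref{lem:bvp_first_order_elliptic} yields a unique $(\dot{p}_{+},\dot{\theta}_{+})\in W_{\beta}^{1}(\dot{\Omega}_{+})$ for any $\beta>2$, together with the estimate
\[
\norm{\dot{p}_{+}}_{W_{\beta}^{1}(\dot{\Omega}_{+})}+\norm{\dot{\theta}_{+}}_{W_{\beta}^{1}(\dot{\Omega}_{+})}\leq C\left(\norm{\dot{P}_{e}}_{\mcc^{2,\alpha}(\Gamma_{3})}+\norm{\dot{\Theta}_{N}^{+}}_{\mcc^{2,\alpha}(\dot{\Gamma}_{4}^{+})}+\norm{\dot{g}_{1}^{\#}}_{W_{\beta}^{1-1/\beta}(\dot{\Gamma}_{\mr{s}})}\right),
\]
with $C$ depending only on $\overline{U}_{+}$ and $L$. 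By \eqref{eq:first_approx_super_estimate} the last term on the right is bounded by $C\sigma$, and the first two are bounded by $C\sigma$ by construction, so $\norm{\dot{p}_{+}}_{W_{\beta}^{1}(\dot{\Omega}_{+})}+\norm{\dot{\theta}_{+}}_{W_{\beta}^{1}(\dot{\Omega}_{+})}\le C\sigma$.

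Next I would recover $(\dot{q}_{+},\dot{S}_{+})$ from \eqref{eq:first_approx_sub_hyper_3}—\eqref{eq:first_approx_sub_hyper_4}: identity \eqref{eq:first_approx_sub_hyper_4} gives $\dot{S}_{+}(\xi,\eta)=\dot{S}_{+}(\overline{\xi}_{*},\eta)=\dot{g}_{3}^{\#}(\overline{\xi}_{*},\eta)$, which is a function of $\eta$ alone, is $\mcc^{2,\alpha}$ and bounded by $C\sigma$ because $\dot{p}_{-}|_{\overline{\xi}_{*}}$ is; and \eqref{eq:first_approx_sub_hyper_3} then expresses $\dot{q}_{+}(\xi,\eta)$ linearly in terms of the value of $\overline{q}_{+}\dot{q}_{+}+\frac{1}{\overline{\rho}_{+}}\dot{p}_{+}+\overline{T}_{+}\dot{S}_{+}$ at $\overline{\xi}_{*}$ (known from $\dot{g}_{1}^{\#},\dot{g}_{2}^{\#},\dot{g}_{3}^{\#}$ in Lemma \ref{lem:first_approx_RH_reformulated}) and of the already-constructed functions $\dot{p}_{+}$ and $\dot{S}_{+}$; hence $\dot{q}_{+}$ inherits the $W_{\beta}^{1}(\dot{\Omega}_{+})$ regularity of $\dot{p}_{+}$, with the same bound by $C\sigma$. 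Since $\beta>2$, the Sobolev embedding $W_{\beta}^{1}(\dot{\Omega}_{+})\hookrightarrow\mcc(\overline{\dot{\Omega}_{+}})$ and the trace theorem $W_{\beta}^{1}(\dot{\Omega}_{+})\to W_{\beta}^{1-1/\beta}(\dot{\Gamma}_{\mr{s}})$ control the $\mcc(\overline{\dot{\Omega}_{+}})$ and $W_{\beta}^{1-1/\beta}(\dot{\Gamma}_{\mr{s}})$ norms of $(\dot{q}_{+},\dot{S}_{+})$; adding the four contributions gives the norm $\norm{\dot{U}_{+}}_{(\dot{\Omega}_{+};\dot{\Gamma}_{\mr{s}})}$ appearing in \eqref{eq:first_approx_sub_estimate}. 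Uniqueness follows by linearity: with all data set to zero, Corollary \ref{lem:bvp_first_order_elliptic} forces $(\dot{p}_{+},\dot{\theta}_{+})\equiv0$, and then the transport relations force $(\dot{q}_{+},\dot{S}_{+})\equiv0$.

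The main obstacle I expect is not conceptual but bookkeeping: one must check that the mixed boundary value problem above genuinely fits the hypotheses of Corollary \ref{lem:bvp_first_order_elliptic}, i.e. that prescribing $\dot{p}_{+}$ on the two vertical sides $\dot{\Gamma}_{\mr{s}},\dot{\Gamma}_{3}$ and $\dot{\theta}_{+}$ on the two horizontal sides $\dot{\Gamma}_{2}^{+},\dot{\Gamma}_{4}^{+}$ satisfies the complementing (Lopatinski) condition at each of the four corners so that the $W_{\beta}^{1}$-theory for $\beta>2$ applies; and one must ensure the estimate constant can be taken independent of $\overline{\xi}_{*}\in(0,L)$, either by quoting a uniform version of the appendix estimate or by performing the affine rescaling $\xi\mapsto(\xi-\overline{\xi}_{*})/(L-\overline{\xi}_{*})$ and tracking how the coefficients and the corner geometry transform.
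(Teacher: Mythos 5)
Your proposal is correct and follows exactly the paper's argument: apply Corollary \ref{lem:bvp_first_order_elliptic} to the elliptic subsystem \eqref{eq:first_approx_sub_1}—\eqref{eq:first_approx_sub_2} with $\dot{p}_{+}$ prescribed on the vertical sides and $\dot{\theta}_{+}$ on the horizontal sides (the solvability condition being guaranteed by the choice of $\overline{\xi}_{*}$ in Lemma \ref{lem:first_approx_criterion}), then recover $\dot{S}_{+}$ and $\dot{q}_{+}$ from \eqref{eq:first_approx_sub_hyper_3}—\eqref{eq:first_approx_sub_hyper_4} and Lemma \ref{lem:first_approx_RH_reformulated}. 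Your closing caveats (corner compatibility and dependence of the constant on $\overline{\xi}_{*}$) are reasonable points of care that the paper handles implicitly via the appendix and by fixing $\overline{\xi}_{*}$ once and for all.
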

\begin{proof}
By Corollary \ref{lem:bvp_first_order_elliptic}, there exists a unique solution $\left(\dot{p}_{+},\dot{\theta}_{+}\right)$
to the boundary value problem consisting of the equations \eqref{eq:first_approx_sub_1}—\eqref{eq:first_approx_sub_2}
together with the boundary conditions \eqref{eq:first_approx_sub_bc-3}—\eqref{eq:first_approx_sub_bc-4},
and \eqref{eq:first_approx_p}. Moreover, it holds that
\begin{equation}
\norm{\dot{p}_{+}}_{W_{\beta}^{1}(\dot{\Omega}_{+})}+\norm{\dot{\theta}_{+}}_{W_{\beta}^{1}(\dot{\Omega}_{+})}\le C\left\{ \norm{\dot{\Theta}_{N}}_{\C^{2,\alpha}(\Gamma_{4})}+\norm{\dot{P}_{e}}_{\C^{2,\alpha}(\Gamma_{3})}+\norm{\dot{g}_{1}^{\#}}_{W_{\beta}^{1-1/\beta}(\dot{\Gamma}_{s})}\right\} ,\label{eq:first_approx_sub_pw}
\end{equation}
where the constant $C$ depends on $\overline{U}_{+}$ and $L$. Then
the lemma follows from \eqref{eq:first_approx_sub_hyper_3}, \eqref{eq:first_approx_sub_hyper_4}
and Lemma \ref{lem:first_approx_super} - \ref{lem:first_approx_RH_reformulated}.
\end{proof}

\subsection{Existence of solutions to the problem $\llbracket FL\rrbracket$ and
some remarks.}

As a consequence of the above arguments, we obtain immediately the existence of the unique solution to problem \textbf{$\llbracket FL\rrbracket$} as follows:
\begin{thm}
\label{thm:first_approx_existence}

Assume that \eqref{eq:criterion_range_pressure} and \eqref{eq:diverging_nozzle}
hold. Then there exists a unique solution $\left(\dot{U}_{-},\ \dot{U}_{+};\dot{\psi}'|\ \overline{\xi}_{*}\right)$
to the problem $\llbracket FL\rrbracket$, where $\overline{\xi}_{*}$,
the free boundary $\dot{\Gamma}_{\mr{s}}$, is determined by the equation
\eqref{eq:first_approx_criterion_exit_pressure}, as in Lemma \ref{lem:first_approx_location_monotone}.
Moreover, it holds that
\begin{eqnarray}
\norm{\dot{U}_{-}}_{\C^{2,\alpha}(\dot{\Omega}_{-})} & \leq & C_{L}\norm{\dot{\Theta}_{N}^{-}}_{\C^{2,\alpha}(\dot{\Gamma}_{4}^{-})},\label{eq:first_approx_super_estimate-1}\\
\norm{\dot{U}_{+}}_{(\dot{\Omega}_{+};\dot{\Gamma}_{s})} & \leq & \dot{C}_{+}\left(\norm{\dot{\Theta}_{N}^{+}}_{\C^{2,\alpha}(\dot{\Gamma}_{4}^{+})}+\norm{\dot{P}_{e}}_{\C^{2,\alpha}(\Gamma_{3})}\right),\label{eq:first_approx_sub_estimate-1}\\
\norm{\dot{\psi}'}_{W_{\beta}^{1-1/\beta}(\dot{\Gamma}_{s})} & \leq & \dot{C}_{+}\left(\norm{\dot{\Theta}_{N}}_{\C^{2,\alpha}(\Gamma_{4})}+\norm{\dot{P}_{e}}_{\C^{2,\alpha}(\Gamma_{3})}\right).\label{eq:first_approx_front_estimate}
\end{eqnarray}
\end{thm}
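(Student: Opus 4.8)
The plan is to prove Theorem \ref{thm:first_approx_existence} simply by assembling, in the order forced by the structure of $\llbracket FL\rrbracket$, the four lemmas of Section 3: one first solves for the supersonic perturbation $\dot U_-$, then uses the elliptic solvability condition to pin down the free-boundary location $\overline{\xi}_*$, then solves for the subsonic perturbation $\dot U_+$, and finally reads off $\dot\psi'$ from the remaining linearized Rankine--Hugoniot relation. The point worth stressing is that there is \emph{no} circular dependence: $\dot U_-$ is solved on the whole rectangle $\Omega$ independently of $\overline{\xi}_*$, so that the trace data on $\dot\Gamma_{\mr s}$ feeding the subsonic problem are known functions of $\overline{\xi}_*$ only.

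Concretely, I would first invoke Lemma \ref{lem:first_approx_super} to get the unique $\dot U_-$ solving \eqref{eq:first_approx_super_1}--\eqref{eq:first_approx_super_4} in $\Omega$ together with the estimate \eqref{eq:first_approx_super_estimate} and, crucially, the integral identity \eqref{eq:first_approx_super_int_p}; its restriction to $\dot\Omega_-$ solves the supersonic part of $\llbracket FL\rrbracket$ for every $\overline{\xi}_*$, which gives \eqref{eq:first_approx_super_estimate-1}, and by Lemma \ref{lem:first_approx_RH_reformulated} it prescribes the traces of $\dot p_+,\dot q_+,\dot S_+$ on $\dot\Gamma_{\mr s}$ via \eqref{eq:first_approx_p}--\eqref{eq:first_approx_S}. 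Since $\overline U_+$ is subsonic, \eqref{eq:first_approx_sub_1}--\eqref{eq:first_approx_sub_2} is a first-order elliptic system on $\dot\Omega_+$ and, by Corollary \ref{lem:bvp_first_order_elliptic}, the mixed problem with data \eqref{eq:first_approx_sub_bc-3}--\eqref{eq:first_approx_sub_bc-4} and \eqref{eq:first_approx_p} is solvable precisely when the compatibility condition \eqref{eq:first_order_elliptic_exist_cond} holds, which Lemma \ref{lem:first_approx_criterion} identifies with equation \eqref{eq:first_approx_criterion_exit_pressure}, i.e. $R(\overline{\xi}_*)=\dot P_*$. Under the hypotheses \eqref{eq:criterion_range_pressure} and \eqref{eq:diverging_nozzle}, Lemma \ref{lem:first_approx_location_monotone} supplies a unique root $\overline{\xi}_*\in(0,L)$ with $R'(\overline{\xi}_*)=-\dot K\,\Theta(\overline{\xi}_*)\neq0$. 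This is the conceptual heart of the statement: it is exactly the step absent from the flat-nozzle background (where $\overline x_{\mr s}$ is free), and it is what couples the shock location to the nozzle geometry $\Theta$ and the exit pressure $P$.

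With $\overline{\xi}_*$ so determined, I would apply Corollary \ref{lem:bvp_first_order_elliptic} to obtain the unique $(\dot p_+,\dot\theta_+)$ in $\dot\Omega_+$, recover $\dot q_+$ and $\dot S_+$ by integrating the transport equations \eqref{eq:first_approx_sub_hyper_3}--\eqref{eq:first_approx_sub_hyper_4} from the free boundary (their traces there being given by Lemma \ref{lem:first_approx_RH_reformulated}), which is exactly Lemma \ref{lem:first_approx_sub} and yields \eqref{eq:first_approx_sub_estimate-1}; and finally obtain $\dot\psi'$ from the last linearized R--H relation \eqref{eq:first_approx_RH_2}. Because $[\overline p]>0$ by the entropy condition and $\beta_4^{\pm}=\pm(0,\overline q_{\pm},0,0)^{\top}$ by Lemma \ref{lem:first_approx_RH_coefficients}, this reads explicitly
\[
\dot\psi'=\frac{1}{[\overline p]}\left(\overline q_{+}\dot\theta_{+}-\overline q_{-}\dot\theta_{-}\right)\qquad\text{on }\dot\Gamma_{\mr s},
\]
so $\dot\psi'$ is uniquely determined and, by the trace estimates for $\dot\theta_{\pm}$, satisfies \eqref{eq:first_approx_front_estimate}. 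Uniqueness of the whole triple $\left(\dot U_-,\dot U_+;\dot\psi'\mid\overline{\xi}_*\right)$ is immediate since each step determines its unknown uniquely. I do not expect a genuine obstacle beyond careful bookkeeping of the function spaces — in particular controlling the trace $\dot g_1^{\#}$ on $\dot\Gamma_{\mr s}$ in $W_\beta^{1-1/\beta}$ with $\beta>2$ as required by Corollary \ref{lem:bvp_first_order_elliptic} — because the only delicate point, the matching of the elliptic solvability condition with the choice of $\overline{\xi}_*$, has already been isolated in Lemmas \ref{lem:first_approx_criterion} and \ref{lem:first_approx_location_monotone}.
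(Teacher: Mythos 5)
Your proposal is correct and follows essentially the same route as the paper, which presents Theorem \ref{thm:first_approx_existence} as an immediate consequence of Lemmas \ref{lem:first_approx_super}--\ref{lem:first_approx_sub} assembled in exactly the order you describe (supersonic solve on all of $\Omega$, solvability condition fixing $\overline{\xi}_{*}$ via Lemmas \ref{lem:first_approx_criterion} and \ref{lem:first_approx_location_monotone}, subsonic solve, then $\dot\psi'$ from \eqref{eq:first_approx_RH_2}). Your explicit formula $\dot\psi'=\frac{1}{[\overline p]}\left(\overline q_{+}\dot\theta_{+}-\overline q_{-}\dot\theta_{-}\right)$ and the observation that the supersonic step is decoupled from $\overline{\xi}_{*}$ are both consistent with the paper's argument.
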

The solution $\left(\dot{U}_{-},\ \dot{U}_{+};\dot{\psi}'|\ \overline{\xi}_{*}\right)$
will lead to an initial approximating shock solution. Then one is able to carry out a nonlinear iteration scheme to obtain an admissible
shock solution to the free boundary problem $\llbracket NSL\rrbracket$
near $\left(\overline{U}_{-} + \dot{U}_{-},\ \overline{U}_{+} + \dot{U}_{+};\dot{\psi}'|\ \overline{\xi}_{*}\right)$.
\begin{rem}
\label{rem:first_approx_multi_solu}

In the case \eqref{eq:nozzle_monotone} that does not hold, there may exist
multiple solutions for $\overline{\xi}_{*}$ to the equation \eqref{eq:first_approx_criterion_equation},
provided \eqref{eq:criterion_range_pressure} holds ( see Figure \ref{fig:Criterion_Gen}).
As an example, for any $k=1,2,\cdots$, if
\[
\Theta\left(\xi\right)\defs\Theta_{k}\left(\xi\right)=\sin^{2}\left(\frac{k\pi}{L}\xi\right),\qquad0<\xi<L,
\]
then there exist $2k$ solutions $\overline{\xi}_{*}^{j}(j=1,2,\cdots,2k)$
to the equation \eqref{eq:first_approx_criterion_equation}, as long
as the given pressure function $P$ satisfies the range condition
\eqref{eq:criterion_range_pressure}. For each $\overline{\xi}_{*}^{j}$,
there exists a unique solution $\left(\dot{U}_{-},\ \dot{U}_{+}^{j};(\dot{\psi}^{j})'|\ \overline{\xi}_{*}^{j}\right)$
to the problem $\llbracket FL\rrbracket$ and
it satisfies the estimates \eqref{eq:first_approx_super_estimate-1}–\eqref{eq:first_approx_front_estimate}.
Then each solution $\left(\dot{U}_{-},\ \dot{U}_{+}^{j};(\dot{\psi}^{j})'|\ \overline{\xi}_{*}^{j}\right)$
can lead to an initial approximating shock solution, which will
give an admissible shock solution by the iteration arguments in
the following sections. This fact results in the non-uniqueness of
the shock solutions to the problem $\llbracket NSL\rrbracket$.
\end{rem}
\begin{figure}[th]
\centering
\def\svgwidth{200pt}
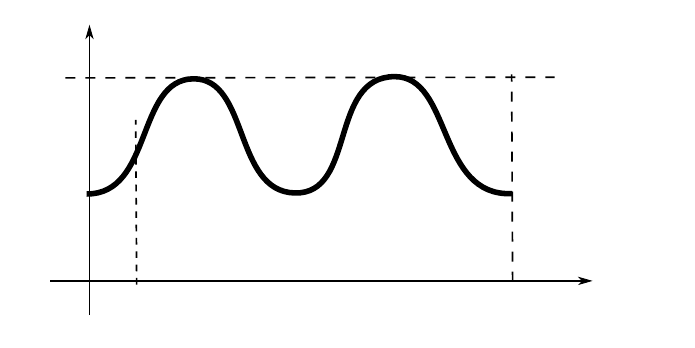

\caption{Multiple initial approximating locations of the shock-fronts for a
general finite nozzle.\label{fig:Criterion_Gen}}
\end{figure}

\textcolor{blue}{}

\section{The Nonlinear Iteration Scheme and the Linearized Problem}

\textcolor{red}{}

The free boundary $\dot{\Gamma}_{s}$ determined in the problem
$\llbracket FL\rrbracket$ will be taken as the initial approximating
location of the shock-front and a nonlinear iteration scheme will
be carried out to determine the shock solution to the problem $\llbracket NSL\rrbracket$. 

\subsection{The supersonic flow in the whole nozzle.}

\textcolor{magenta}{}

Since the Euler system is hyperbolic in supersonic regions, so
applying the theory in \cite{LiYu1985} by Li and Yu yields the supersonic flow field in the nozzle as follows: 
\begin{thm} 
 \label{thm:supersonic_flow}

Suppose that \eqref{eq:upper_bdry_compatibility_2nd}
holds. Then there exists a positive constant $\sigma_{L}$ depending
on $\overline{U}_{-}$ and $L$, such that for any $0<\sigma\leq\sigma_{L}$,
the initial-boundary value problem
\begin{align*}
 & \text{The Euler system \eqref{eq:Euler_Lagrange_matrix_1}---\eqref{eq:Euler_Lagrange_matrix_4},} & \text{ in } & \Omega\\
 & U_{-}\left(0,\eta\right)=\overline{U}_{-}, & \text{ on } & \Gamma_{1}\\
 & \theta_{-}=0, & \text{ on } & \Gamma_{2}\\
 & \theta_{-}=\sigma\Theta\left(\xi\right), & \text{ on } & \Gamma_{4}
\end{align*}
has a unique solution $U_{-}\in\C^{2,\alpha}\left(\overline{\Omega}\right)$ such that
\begin{equation}
\norm{U_{-}-\overline{U}_{-}}_{\C^{2,\alpha}\left(\overline{\Omega}\right)}\leq C_{L}\sigma.\label{eq:estimate_perturbation_supersonic}
\end{equation}
Finally, let $\delta U_{-}\defs U_{-}-\overline{U}_{-}$, then there
exists a constant $\hat{C}_{L}$, depending on $L$ and $\overline{U}_{-}$,
such that
\begin{equation}
\norm{\delta U_{-}-\dot{U}_{-}}_{\C^{1,\alpha}\left(\overline{\Omega}\right)}\leq C_{\overline{U}_{-}}\norm{\partial_{\xi}\delta U_{-}}_{\C^{1,\alpha}\left(\overline{\Omega}\right)}\cdot\norm{\delta U_{-}}_{\C^{1,\alpha}\left(\overline{\Omega}\right)}\leq\hat{C}_{L}\sigma^{2}.\label{eq:estimate_supersonic_difference}
\end{equation}
\end{thm}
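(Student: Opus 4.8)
The plan is to recast the stated initial–boundary value problem as a well-posed semi-global problem for a quasilinear strictly hyperbolic system with $\xi$ playing the role of the evolution variable, and then to invoke the boundary-value theory of Li and Yu \cite{LiYu1985}. First I would record the structural facts valid near the background supersonic state $\overline{U}_-$: the matrix $A(U)$ in \eqref{eq:Euler_Lagrange_matrix_1}–\eqref{eq:Euler_Lagrange_matrix_2} is invertible there, with two real and strictly separated eigenvalues $\lambda_\pm$ of opposite sign (at $\overline\theta_-=0$ one has $\lambda_\pm=\pm\overline{c}_-^{-1}\cdot(\cdots)$ with $\overline{M}_->1$), while \eqref{eq:Euler_Lagrange_matrix_3} (or \eqref{eq:Euler_Lagrange_matrix_3-B}) and \eqref{eq:Euler_Lagrange_matrix_4} are transport equations in the $\xi$–direction. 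Hence, in a neighbourhood of $\overline{U}_-$, the Euler system \eqref{eq:Euler_Lagrange_matrix_1}–\eqref{eq:Euler_Lagrange_matrix_4} can be put in normal form $\partial_\xi U+\widetilde{A}(U)\partial_\eta U=0$; the entrance $\Gamma_1=\{\xi=0\}$ is non-characteristic, so \eqref{eq:Lagrange_entrance_cond} prescribes admissible Cauchy data; and at each wall $\Gamma_2,\Gamma_4$ there is exactly one incoming acoustic characteristic, for which the single scalar slip condition ($\theta_-=0$ on $\Gamma_2$, $\theta_-=\sigma\Theta(\xi)$ on $\Gamma_4$) is a non-degenerate boundary condition, its boundary matrix remaining invertible near $\overline{U}_-$. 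Thus all structural hypotheses of the semi-global boundary-value theory for quasilinear hyperbolic systems are satisfied.

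Next, since $\overline{U}_-$ is itself the (constant) solution on the whole rectangle $\Omega=(0,L)\times(0,1)$ and the prescribed data are an $O(\sigma)$ perturbation of the background data ($\overline{U}_-$ constant, $\norm{\Theta}_{\C^{2,\alpha}[0,L]}=1$), the quantitative version of the existence theorem in \cite{LiYu1985} yields a threshold $\sigma_L=\sigma_L(\overline{U}_-,L)>0$ such that for every $0<\sigma\le\sigma_L$ there is a unique classical solution on all of $\overline{\Omega}$; this is precisely where the fixed length $L$ enters, as $\sigma_L$ must shrink when $L$ grows. The compatibility conditions \eqref{eq:upper_bdry_compatibility_2nd}, $\Theta(0)=\Theta'(0)=\Theta''(0)=0$, are exactly the zeroth-, first- and second-order corner compatibility relations at $(0,1)=\Gamma_1\cap\Gamma_4$ (at $(0,0)$ compatibility holds to all orders automatically, since $\theta_-\equiv0$ on both $\Gamma_1$ and $\Gamma_2$), so the solution is $\C^{2,\alpha}$ up to $\overline{\Omega}$; and because the inhomogeneous data enter linearly multiplied by $\sigma$, the standard characteristic/energy estimates give $\norm{U_--\overline{U}_-}_{\C^{2,\alpha}(\overline{\Omega})}\le C_L\sigma$, i.e. \eqref{eq:estimate_perturbation_supersonic}.

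For the second-order comparison \eqref{eq:estimate_supersonic_difference}, set $\delta U_-=U_--\overline{U}_-$ and $w=\delta U_--\dot{U}_-$. Subtracting the linearized system \eqref{eq:first_approx_super_1}–\eqref{eq:first_approx_super_4} satisfied by $\dot{U}_-$ from the exact Euler system satisfied by $U_-$, and using $\partial_\xi\overline{U}_-=0$, one checks that $w$ solves the \emph{linearized} problem at $\overline{U}_-$ with zero Cauchy data on $\Gamma_1$ and zero boundary data on $\Gamma_2,\Gamma_4$, driven by a source that is quadratic in $\delta U_-$: schematically $\bigl(A(U_-)-A(\overline{U}_-)\bigr)\partial_\xi\delta U_-+\cdots=O\!\bigl(\,\abs{\delta U_-}\,\abs{\partial_\xi\delta U_-}\,\bigr)$, since every term not already captured by the linearization carries at least one extra factor of $\delta U_-$ (the coefficients $\frac{\sin\theta}{\rho q}$, $q\cos\theta$, $\rho q$, $\ldots$ all differ from their background values by $O(\abs{\delta U_-})$). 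Invoking well-posedness of this \emph{linear} hyperbolic initial–boundary value problem in $\C^{1,\alpha}$ (one derivative is lost because the source is a product of first derivatives) gives $\norm{w}_{\C^{1,\alpha}(\overline{\Omega})}\le C_{\overline{U}_-}\norm{\partial_\xi\delta U_-}_{\C^{1,\alpha}(\overline{\Omega})}\norm{\delta U_-}_{\C^{1,\alpha}(\overline{\Omega})}$, the middle inequality in \eqref{eq:estimate_supersonic_difference}; combining with $\norm{\delta U_-}_{\C^{2,\alpha}(\overline{\Omega})}\le C_L\sigma$ (hence also $\norm{\partial_\xi\delta U_-}_{\C^{1,\alpha}(\overline{\Omega})}\le C_L\sigma$) produces the final $O(\sigma^2)$ bound.

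The step I expect to be the main obstacle is not the local solvability but the \emph{semi-global} nature of the claim: one must guarantee that the classical solution neither loses regularity nor leaves the supersonic regime before $\xi=L$. This is exactly what the smallness threshold $\sigma_L(\overline{U}_-,L)$ buys, and handling it cleanly means either quoting the sharp semi-global existence theorem of \cite{LiYu1985} verbatim or running a continuity argument that propagates the $\C^{2,\alpha}$ a priori bound in $\xi$, using the global well-posedness of the linearization at the constant state on the rectangle to exclude finite-$\xi$ breakdown. Verifying the corner compatibility at $(0,1)$ explicitly, so that the regularity holds up to $\overline{\Omega}$ and not merely in the interior, is the other point that should be spelled out carefully.
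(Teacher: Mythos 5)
Your proposal is correct and takes essentially the same route as the paper: the existence, uniqueness and the $O(\sigma)$ bound \eqref{eq:estimate_perturbation_supersonic} are obtained by quoting the semi-global theory of Li--Yu for the quasilinear hyperbolic initial-boundary value problem (the paper does this in one line), and \eqref{eq:estimate_supersonic_difference} is proved exactly as you describe, by viewing $\delta U_{-}-\dot{U}_{-}$ as the solution of the linear problem at $\overline{U}_{-}$ with zero data and the quadratic source $\left(\mca_{1}(\overline{U}_{-})-\mca_{1}(U_{-})\right)\partial_{\xi}\delta U_{-}=O\left(\abs{\delta U_{-}}\,\abs{\partial_{\xi}\delta U_{-}}\right)$. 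Your extra remarks on the characteristic count at the walls and the corner compatibility conditions are consistent with, and slightly more explicit than, the paper's treatment.
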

\begin{proof}
The existence of the unique solution $U_{-}\in\mcc^{2,\alpha}\left(\overline{\Omega}\right)$
can be obtained by applying the theory in \cite{LiYu1985}. Thus, it suffices to verify \eqref{eq:estimate_supersonic_difference}.

For the simplicity of notations, we rewrite the Euler system \eqref{eq:Euler_Lagrange_matrix_1}—\eqref{eq:Euler_Lagrange_matrix_4} as
\begin{equation}
\mca_{0}\partial_{\eta}U+\mca_{1}(U)\partial_{\xi}U=0,\label{eq:Euler_Lagrange_matrix}
\end{equation}
where
\[
\mca_{0}=\begin{bmatrix}1 & 0 & 0 & 0\\
0 & 1 & 0 & 0\\
0 & 0 & 0 & 0\\
0 & 0 & 0 & 0
\end{bmatrix},\qquad\mca_{1}\left(U\right)=\begin{bmatrix}-\frac{\sin\theta}{\rho q} & q\cos\theta & 0 & 0\\
-\frac{\cos\theta}{\rho q}\cdot\frac{1-M^{2}}{\rho q^{2}} & -\frac{\sin\theta}{\rho q} & 0 & 0\\
1 & 0 & \rho q & 0\\
0 & 0 & 0 & 1
\end{bmatrix}.
\]
Then $U_{-}\in\mcc^{2,\alpha}\left(\overline{\Omega}\right)$ solves
the following initial-boundary vlue problem:
\begin{align*}
 & \mca_{0}\partial_{\eta}U_{-}+\mca_{1}(U_{-})\partial_{\xi}U_{-}=0, & \text{ in } & \Omega\\
 & U_{-}=\overline{U}_{-}, & \text{ on } & \Gamma_{1}\\
 & \theta_{-}=0, & \text{ on } & \Gamma_{2}\\
 & \theta_{-}=\sigma\Theta\left(\xi\right). & \text{ on } & \Gamma_{4}
\end{align*}
Therefore, $\delta U_{-}-\dot{U}_{-}$ satisfies the following problem:
\begin{align*}
 & \mca_{0}\partial_{\eta}\left(\delta U_{-}-\dot{U}_{-}\right)+\mca_{1}\left(\overline{U}_{-}\right)\partial_{\xi}\left(\delta U_{-}-\dot{U}_{-}\right)=\mcf_{-}\left(\delta U_{-}\right), & \text{ in } & \Omega\\
 & \left(\delta U_{-}-\dot{U}_{-}\right)=0, & \text{ on } & \Gamma_{1}\\
 & \delta\theta_{-}-\dot{\theta}_{-}=0, & \text{ on } & \Gamma_{2}\\
 & \delta\theta_{-}-\dot{\theta}_{-}=0, & \text{ on } & \Gamma_{4}
\end{align*}
where 
\[
\mcf_{-}\left(\delta U_{-}\right)\defs\left(\mca_{1}\left(\overline{U}_{-}\right)-\mca_{1}\left(U_{-}\right)\right)\partial_{\xi}\delta U_{-}+\mca_{0}\partial_{\eta}\delta U_{-}.
\]
Hence, there exists a constant $C_{\overline{U}_{-}}$ depending on
$\overline{U}_{-}$, such that
\begin{eqnarray*}
\norm{\delta U_{-}-\dot{U}_{-}}_{\C^{1,\alpha}\left(\overline{\Omega}\right)} & \leq & C_{\overline{U}_{-}}\norm{\mcf_{-}\left(\delta U_{-}\right)}_{\C^{1,\alpha}\left(\overline{\Omega}\right)}\\
 & \leq & C_{\overline{U}_{-}}\norm{\partial_{\xi}\delta U_{-}}_{\C^{1,\alpha}\left(\overline{\Omega}\right)}\cdot\norm{\delta U_{-}}_{\C^{1,\alpha}\left(\overline{\Omega}\right)},
\end{eqnarray*}
which proves \eqref{eq:estimate_supersonic_difference}.
\end{proof}

\subsection{The shock-front and subsonic flow behind it}

Now we assume that, for the given pressure $P_{e}$ at the exit, there
appears a shock-front $\Gamma_{\mr{s}}$ whose location is close to
$\dot{\Gamma}_{s}$:
\[
\Gamma_{\mr{s}}\defs\set{\left(\xi,\eta\right):\ \xi=\psi\left(\eta\right)\defs\overline{\xi}_{*}+\delta\psi\left(\eta\right),\ 0<\eta<1}.
\]
Then the region of the subsonic flow behind it is 
\[
\Omega_{+}=\set{\left(\xi,\eta\right):\ \psi\left(\eta\right)<\xi<L,\ 0<\eta<1},
\]
and the state of the flow $U_{+}$ is assumed to be close to $\overline{U}_{+}$.
Then $\left(U_{+};\psi\right)$ solves the following free boundary
value problem $\llbracket NSL-Sub\rrbracket$:
\begin{align*}
 & \mca_{0}\partial_{\eta}U_{+}+\mca_{1}(U_{+})\partial_{\xi}U_{+}=0, & \text{ in } & \Omega_{+}\\
 & \text{The R-H conditions \eqref{eq:RH_Lagrange_1}---\eqref{eq:RH_Lagrange_4_bdry}}, & \text{ on } & \Gamma_{\mr{s}}\\
 & \theta_{+}=0, & \text{ on } & \Gamma_{2}\\
 & p_{+}=P_{e}\left(Y\left(L,\eta\right)\right), & \text{ on } & \Gamma_{3}\\
 & \theta_{+}=\sigma\Theta\left(\xi\right), & \text{ on } & \Gamma_{4}
\end{align*}
where, in the R-H conditions on the shock-front $\Gamma_{\mr{s}}$,
$U_{-}$ are given by the supersonic flow determined in Theorem \ref{thm:supersonic_flow}.
Thus, the key step here is to solve the free boundary value problem $\llbracket NSL-Sub\rrbracket$
near $\left(\overline{U}_{+};\overline{\psi}\right)$. It should be noted here that
the free boundary $\psi$ will be determined by two independent
information: the shape of the shock-front $\psi'$ will be determined
by the R-H conditions, and an exact position on the nozzle $\xi_{*}\defs\psi(1)$
where the shock-front passes through will be determined by the solvability
condition for the existence of the solution $ {U}_{+} $. Therefore, we shall rewrite
$\psi$ as below:
\[
\psi(\eta)=\overline{\xi}_{*}+\delta\xi_{*}-\int_{\eta}^{1}\delta\psi'(s)\dif s,
\]
where $\delta\xi_{*}\defs\xi_{*}-\overline{\xi}_{*}=\psi(1)-\overline{\xi}_{*}$.

\begin{figure}[th]
\centering
\def\svgwidth{200pt}
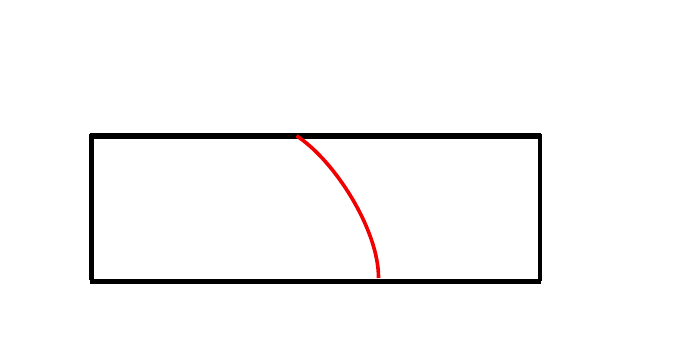

\caption{The shock-front and its initial approximating location.\label{fig:Shock_front}}
\end{figure}

First, introduce a transformation:
\[
\Pi_{\psi}:\ \begin{cases}
\tilde{\xi}=L+\displaystyle\frac{L-\overline{\xi}_{*}}{L-\psi(\eta)}(\xi-L),\\
\tilde{\eta}=\eta,
\end{cases}\ \text{with its inverse }\Pi_{\psi}^{-1}:\ \begin{cases}
\xi=L+\displaystyle\frac{L-\psi(\tilde{\eta})}{L-\overline{\xi}_{*}}(\tilde{\xi}-L),\\
\eta=\tilde{\eta},
\end{cases}
\]
under which the region $\Omega_{+}$, $\Gamma_{\mr{s}}$, and $\Gamma_{j}(j=2,3,4)$ become, respectively, 
\begin{eqnarray*}
\tilde{\Omega} & \defs & \set{\left(\tilde{\xi},\tilde{\eta}\right):\ \overline{\xi}_{*}<\tilde{\xi}<L,\ 0<\tilde{\eta}<1}, \\
\tilde{\Gamma}_{s} & \defs & \set{\left(\tilde{\xi},\tilde{\eta}\right):\ \tilde{\xi}=\overline{\xi}_{*},\ 0<\tilde{\eta}<1}, \\
\tilde{\Gamma}_{2} & \defs & \set{\left(\tilde{\xi},\tilde{\eta}\right):\ \overline{\xi}_{*}<\tilde{\xi}<L,\ \tilde{\eta}=0},\\
\tilde{\Gamma}_{3} & \defs & \set{\left(\tilde{\xi},\tilde{\eta}\right):\ \tilde{\xi}=L,\ 0<\tilde{\eta}<1},\\
\tilde{\Gamma}_{4} & \defs & \set{\left(\tilde{\xi},\tilde{\eta}\right):\ \overline{\xi}_{*}<\tilde{\xi}<L,\ \tilde{\eta}=1}.
\end{eqnarray*}

Let $\tilde{U}(\tilde{\xi},\tilde{\eta})\defs U_{+}\circ\Pi_{\psi}^{-1}(\tilde{\xi},\tilde{\eta})$.
Then $\tilde{U}$ satisfies the following equations in $\tilde{\Omega}$:
\begin{equation}
\mca_{0}\partial_{\tilde{\eta}}\tilde{U}+\mca_{1}(\tilde{U})\partial_{\tilde{\xi}}\tilde{U}=-\frac{L-\tilde{\xi}}{L-\psi(\tilde{\eta})}\cdot\psi'(\tilde{\eta})\mca_{0}\partial_{\tilde{\xi}}\tilde{U}+\frac{\psi(\tilde{\eta})-\overline{\xi}_{*}}{L-\psi(\tilde{\eta})}\mca_{1}(\tilde{U})\partial_{\tilde{\xi}}\tilde{U}.\label{eq:fix_bdry_system}
\end{equation}
Moreover, the boundary conditions on $\Gamma_{j}(j=2,3,4)$ become
\begin{align}
 & \tilde{\theta}=0, & \text{ on } & \tilde{\Gamma}_{2}\label{eq:fix_bdry_bc_2}\\
 & \tilde{p}=P_{e}\left(Y\left(L,\tilde{\eta}\right)\right), & \text{ on } & \tilde{\Gamma}_{3}\label{eq:fix_bdry_bc_3}\\
 & \tilde{\theta}=\sigma\Theta\circ\Pi_{\psi}^{-1}(\tilde{\xi},1), & \text{ on } & \tilde{\Gamma}_{4}\label{eq:fix_bdry_bc_4}
\end{align}
and the R-H conditions \eqref{eq:RH_Lagrange_1}—\eqref{eq:RH_Lagrange_4_bdry}
become
\begin{align}
 & G_{j}\left(\tilde{U},U_{-}^{(\psi',\xi_{*})}\right)=0,\qquad j=1,2,3 & \text{ on } & \tilde{\Gamma}_{s}\label{eq:fix_bdry_bc_RH_1}\\
 & G_{4}\left(\tilde{U},U_{-}^{(\psi',\xi_{*})};\psi'\right)=0, & \text{ on } & \tilde{\Gamma}_{s}\label{eq:fix_bdry_bc_RH_2}
\end{align}
where 
\[
U_{-}^{(\psi',\xi_{*})}\defs U_{-}\left(\psi(\tilde{\eta}),\tilde{\eta}\right).
\]

Thus, the free boundary value problem $\llbracket NSL-Sub \rrbracket$
is reduced to a fixed boundary value problem \eqref{eq:fix_bdry_system}—\eqref{eq:fix_bdry_bc_RH_2}.
Then we shall employ a nonlinear iteration scheme to prove the existence
of its solutions.

For simplicity of the notations, we shall drop `` $\tilde{}$ ''
in the following argument.

\subsection{The linearized problem for the iteration}

In this subsection, we shall describe the linearized problem for the
nonlinear iteration scheme which will be used to prove the existence
of the solution to the problem \eqref{eq:fix_bdry_system}—\eqref{eq:fix_bdry_bc_RH_2}.
Since the location of the shock-front $ \Gamma_{\mr{s}}$ is expected to be close to the initial approximation $ \dot{\Gamma}_{s} $ obtained in Section 3, and the state $ U $ of the subsonic flow behind the shock-front is expected to be perturbation of the state $ \overline{U}_{+} $, the iteration scheme would be designed as follows, in which the right-hand sides for the perturbation terms were quantities of the order $ \sigma^{2} $.

\medskip{}

\textbf{The iteration scheme. }

Let $U=\overline{U}_{+}+\delta U$ be a given approximating state with $\delta U\defs(\delta p,\ \delta\theta,\ \delta q,\ \delta S)^{\top}$ close to $\dot{U}_{+}$, and let $\psi(\eta)=\overline{\psi}(\eta) + \delta\psi(\eta)$ be an approximating location of the shock-front with
\begin{equation}
\psi(\eta)=\psi(1)-\int_{\eta}^{1}\delta\psi'(\tau)\dif\tau=\xi_{*}-\int_{\eta}^{1}\delta\psi'(\tau)\dif\tau,\label{eq:linearized_shock_front_updating}
\end{equation}
where $\delta\psi'$ is a given function close to $ \dot{\psi}' $ while $ \psi(1)=\xi_{*}\defs \overline{\xi}_{*}+\delta\xi_{*}$ is unknown.

Then try to determine $ \delta\xi_{*} $ and update $ (\delta U;\ \delta\psi') $ by  $ (\delta U^{*};\ (\delta\psi^{*})') $ such that
\begin{enumerate}
\item $\delta U^{*}\defs(\delta p^{*},\ \delta\theta^{*},\ \delta q^{*},\ \delta S^{*})^{\top}$ satisfies the following linearized Euler equations
in $\Omega$:
\begin{align}
 & \partial_{\eta}\delta p^{*}+\overline{q}_{+}\partial_{\xi}\delta\theta^{*}=f_{1}\left(\delta U;\delta\psi',\delta\xi_{*}\right),\label{eq:linearized_system_sub_elliptic_1}\\
 & \partial_{\eta}\delta\theta^{*}-\frac{1}{\overline{\rho}_{+}\overline{q}_{+}}\cdot\frac{1-\overline{M}_{+}^{2}}{\overline{\rho}_{+}\overline{q}_{+}^{2}}\partial_{\xi}\delta p^{*}=f_{2}\left(\delta U;\delta\psi',\delta\xi_{*}\right),\label{eq:linearized_system_sub_elliptic_2}\\
 & \partial_{\xi}\left(\overline{q}_{+}\delta q^{*}+\frac{1}{\overline{\rho}_{+}}\delta p^{*}+\overline{T}_{+}\delta S^{*}\right)=\partial_{\xi}f_{3}\left(\delta U\right),\label{eq:linearized_system_sub_hyper_1}\\
 & \partial_{\xi}\delta S^{*}=0,\label{eq:linearized_system_sub_hyper_2}
\end{align}
where
\begin{eqnarray*}
f_{1}\left(\delta U;\delta\psi',\delta\xi_{*}\right) & \defs & \left(\partial_{\eta}\delta p+\overline{q}_{+}\partial_{\xi}\delta\theta\right)-\left(\partial_{\eta}p-\frac{\sin\theta}{\rho q}\partial_{\xi}p+q\cos\theta\partial_{\xi}\theta\right)\\
 &  & -\frac{L-\xi}{L-\psi(\eta)}\cdot\delta\psi'(\eta)\partial_{\xi}p\\
 &  & +\frac{\delta\xi_{*}-\int_{\eta}^{1}\delta\psi'(\tau)\dif\tau}{L-\psi(\eta)}\left(\frac{\sin\theta}{\rho q}\partial_{\xi}p-q\cos\theta\partial_{\xi}\theta\right),\\
f_{2}\left(\delta U;\delta\psi',\delta\xi_{*}\right) & \defs & \left(\partial_{\eta}\delta\theta-\frac{1}{\overline{\rho}_{+}\overline{q}_{+}}\cdot\frac{1-\overline{M}_{+}^{2}}{\overline{\rho}_{+}\overline{q}_{+}^{2}}\partial_{\xi}\delta p\right)\\
 &  & -\left(\partial_{\eta}\theta-\frac{\sin\theta}{\rho q}\partial_{\xi}\theta-\frac{\cos\theta}{\rho q}\cdot\frac{1-M^{2}}{\rho q^{2}}\partial_{\xi}p\right)\\
 &  & -\frac{L-\xi}{L-\psi(\eta)}\cdot\delta\psi'(\eta)\partial_{\xi}\theta\\
 &  & +\frac{\delta\xi_{*}-\int_{\eta}^{1}\delta\psi'(\tau)\dif\tau}{L-\psi(\eta)}\left(\frac{\sin\theta}{\rho q}\partial_{\xi}\theta+\frac{\cos\theta}{\rho q}\cdot\frac{1-M^{2}}{\rho q^{2}}\partial_{\xi}p\right),\\
f_{3}\left(\delta U\right) & \defs & \left(\overline{q}_{+}\delta q+\frac{1}{\overline{\rho}_{+}}\delta p+\overline{T}_{+}\delta S\right)-\Phi\left(U\right).
\end{eqnarray*}
\item On the nozzle boundaries $\Gamma_{2}$ and $\Gamma_{4}$,
\begin{align}
 & \delta\theta^{*}=0, & \text{ on } & \Gamma_{2}\label{eq:linearized_bc_nozzle_2}\\
 & \delta\theta^{*}=\delta\Theta_{4}\left(\xi;\delta\xi_{*}\right), & \text{ on } & \Gamma_{4}\label{eq:linearized_bc_nozzle_4}
\end{align}
where 
\[
\delta\Theta_{4}\left(\xi;\delta\xi_{*}\right)\defs\sigma\Theta\circ\Pi_{\psi}^{-1}(\xi,1)=\sigma\Theta\left(\frac{L-\xi_{*}}{L-\overline{\xi}_{*}}\xi+\frac{\delta\xi_{*}}{L-\overline{\xi}_{*}}L\right).
\]
\item On the exit of the nozzle $\Gamma_{3}$,
\begin{equation}
\delta p^{*}=\delta P_{3}(\eta;\delta U),\qquad\text{ on }\Gamma_{3}\label{eq:linearized_bc_nozzle_3}
\end{equation}
where
\begin{eqnarray*}
\delta P_{3}(\eta;\delta U) & \defs & \sigma P\left(Y\left(L,\eta;\delta U\right)\right),\\
Y\left(L,\eta;\delta U\right) & \defs & \int_{0}^{\eta}\frac{1}{\left(\rho q\cos\theta\right)(L,s)}\dif s.
\end{eqnarray*}
\item On the fixed shock-front $\Gamma_{\mr{s}}$, the R-H conditions are
linearized as
\begin{align}
 & \beta_{j}^{+}\cdot\delta U^{*}=g_{j}\left(\delta U,\delta U_{-};\delta\psi',\delta\xi_{*}\right),\quad j=1,2,3, & \text{ on } & \Gamma_{\mr{s}}\label{eq:linearized_bc_RH_1}\\
 & \beta_{4}^{+}\cdot\delta U^{*}-\left[\overline{p}\right](\delta\psi^{*})'=g_{4}\left(\delta U,\delta U_{-};\delta\psi',\delta\xi_{*}\right), & \text{ on } & \Gamma_{\mr{s}}\label{eq:linearized_bc_RH_2}
\end{align}
where 
\begin{eqnarray*}
g_{j}\left(\delta U,\delta U_{-};\delta\psi',\delta\xi_{*}\right) & \defs & \beta_{j}^{+}\cdot\delta U-G_{j}\left(U,U_{-}^{(\psi',\xi_{*})}\right),\qquad j=1,2,3\\
g_{4}\left(\delta U,\delta U_{-};\delta\psi',\delta\xi_{*}\right) & \defs & \left(\beta_{4}^{+}\cdot\delta U-\left[\overline{p}\right]\delta\psi'\right)-G_{4}\left(U,U_{-}^{(\psi',\xi_{*})};\psi'\right).
\end{eqnarray*}
\end{enumerate}
\medskip{}

Obviously, one needs to find an appropriate set for $\left(\delta U;\delta\psi'\right)$
such that $\delta\xi_{*}$ can be determined, and the iteration mapping
\[
\mcj_{s}:\left(\delta U;\delta\psi'\right)\mapsto\left(\delta U^{*};(\delta\psi^{*})'\right)
\]
would be well-defined and contractive. The crucial step is to establish
the well-posed theory for the linearized problem \eqref{eq:linearized_system_sub_elliptic_1}—\eqref{eq:linearized_bc_RH_2}.
\begin{defn}
\label{def:linearized_problem_operator}

For simplicity of notations, define the solution $ \left(\delta U^{*};\ (\delta\psi^{*})'\ |\delta\xi_{*} \right) $ to the linearized problem \eqref{eq:linearized_system_sub_elliptic_1}-\eqref{eq:linearized_bc_RH_2} near $ \left(\dot{U}_{+};\ \dot{\psi}'\ |\  0\right) $
as an operator:
\[
\left(\delta U^{*};\ (\delta\psi^{*})'\ |\delta\xi_{*} \right) = \mcp_{a}\left(\mcf;\mcg;\delta P_{3};\delta\Theta_{4}\right),
\]
where $\mcf\defs\left(f_{1},f_{2},f_{3}\right)$, and $\mcg\defs\left(g_{1},g_{2},g_{3},g_{4}\right)$. When $ \delta\xi_{*} $ is omitted, it will also be denoted by 
\[
\left(\delta U^{*};\ (\delta\psi^{*})'\right) = \mcp\left(\mcf;\mcg;\delta P_{3};\delta\Theta_{4}\right).
\]
\end{defn}
Since $ \det B_{s} \not= 0 $, the boundary conditions \eqref{eq:linearized_bc_RH_1} can be rewritten
as
\begin{eqnarray}
\delta p^{*} & = & g_{1}^{\sharp},\label{eq:linearized_bc_RH_reform_p}\\
\delta q^{*} & = & g_{2}^{\sharp},\label{eq:linearized_bc_RH_reform_w}\\
\delta S^{*} & = & g_{3}^{\sharp},\label{eq:linearized_bc_RH_reform_S}
\end{eqnarray}
where $g_{j}^{\sharp}=g_{j}^{\sharp}\left(\delta U,\delta U_{-};\delta\psi',\delta\xi_{*}\right)$,
$(j=1,2,3)$, is determined by 
\[
\begin{bmatrix}g_{1}^{\sharp}\\
g_{2}^{\sharp}\\
g_{3}^{\sharp}
\end{bmatrix}=B_{s}^{-1}\begin{bmatrix}g_{1}\\
g_{2}\\
g_{3}
\end{bmatrix}.
\]
Then the equations \eqref{eq:linearized_system_sub_elliptic_1}—\eqref{eq:linearized_system_sub_elliptic_2}
with the boundary conditions \eqref{eq:linearized_bc_nozzle_2}—\eqref{eq:linearized_bc_nozzle_3},
and \eqref{eq:linearized_bc_RH_reform_p} form a closed boundary value
problem of a first-order elliptic system for $\left(\delta p^{*},\delta\theta^{*}\right)^{\top}$.
By Lemma \ref{lem:bvp_first_order_elliptic}, there exists a unique
solution to this problem if
\begin{alignat}{1}
\int_{\Omega}f_{2}\left(\delta U;\delta\psi',\delta\xi_{*}\right)\dif\xi\dif\eta & =\int_{\overline{\xi}_{*}}^{L}\delta\Theta_{4}(\xi;\delta\xi_{*})\dif\xi\nonumber \\
+\frac{1}{\overline{\rho}_{+}\overline{q}_{+}}\cdot\frac{1-\overline{M}_{+}^{2}}{\overline{\rho}_{+}\overline{q}_{+}^{2}} & \int_{0}^{1}\left\{ g_{1}^{\sharp}\left(\delta U,\delta U_{-};\delta\psi',\delta\xi_{*}\right)-\delta P_{3}(\eta;\delta U)\right\} \dif\eta,\label{eq:linearized_solvability_cond}
\end{alignat}
which will be used to determine $\delta\xi_{*}$. In addition, if the condition \eqref{eq:linearized_solvability_cond} holds, the solution
$\left(\delta p^{*},\delta\theta^{*}\right)^{\top}$ satisfies the
estimate that, for any $\beta>2$,
\begin{align}
 & \norm{\delta p^{*}}_{W_{\beta}^{1}(\Omega)}+\norm{\delta\theta^{*}}_{W_{\beta}^{1}(\Omega)}\nonumber \\
\leq & C\left(\sum_{j=1}^{2}\norm{f_{j}}_{L^{\beta}(\Omega)}+\norm{g_{1}^{\sharp}}_{W^{1-\frac{1}{\beta},\beta}(\Gamma_{\mr{s}})}+\norm{\delta P_{3}}_{W^{1-\frac{1}{\beta},\beta}(\Gamma_{3})}+\norm{\delta\Theta_{4}}_{W^{1-\frac{1}{\beta},\beta}(\Gamma_{4})}\right),\label{eq:linearized_estimates_pw}
\end{align}
where the constant $C$ depends on $\beta$. It follows from \eqref{eq:linearized_system_sub_hyper_2} and \eqref{eq:linearized_bc_RH_reform_S} that
\begin{equation}
\delta S^{*}\left(\xi,\eta\right)=\delta S^{*}\left(\overline{\xi}_{*},\eta\right)=g_{3}^{\sharp}(\eta),\label{eq:linearized_solu_S}
\end{equation}
which yields the estimate
\begin{equation}
\norm{\delta S^{*}}_{W^{1-\frac{1}{\beta},\beta}(\Gamma_{\mr{s}})}+\norm{\delta S^{*}}_{\mcc^{0}(\Omega)}\leq C\norm{g_{3}^{\sharp}}_{W^{1-\frac{1}{\beta},\beta}(\Gamma_{\mr{s}})},\label{eq:linearized_estimate_S}
\end{equation}
with the constant $C$ depending also on $\beta$. Once
$\left(\delta p^{*},\delta\theta^{*},\delta S^{*}\right)^{\top}$ is obtained,
one can obtain $\delta q^{*}$ from \eqref{eq:linearized_system_sub_hyper_1}
and \eqref{eq:linearized_bc_RH_reform_w} as follows:
\begin{align}
 & \left(\overline{q}_{+}\delta q^{*}+\frac{1}{\overline{\rho}_{+}}\delta p^{*}+\overline{T}_{+}\delta S^{*}\right)\Big|_{\left(\xi,\eta\right)}\nonumber \\
= & \left(\overline{q}_{+}\delta q^{*}+\frac{1}{\overline{\rho}_{+}}\delta p^{*}+\overline{T}_{+}\delta S^{*}\right)\Big|_{\left(\overline{\xi}_{*},\eta\right)}+\left(f_{3}(\xi,\eta)-f_{3}(\overline{\xi}_{*},\eta)\right).\label{eq:linearized_solu_q}
\end{align}
Therefore, $\delta q^{*}$ can be estimated as 
\begin{align}
 & \norm{\delta q^{*}}_{W^{1-\frac{1}{\beta},\beta}(\Gamma_{\mr{s}})}+\norm{\delta q^{*}}_{\mcc^{0}(\Omega)}\label{eq:linearized_estimate_q}\\
\leq & C\left(\sum_{j=1}^{3}\norm{g_{j}^{\sharp}}_{W^{1-\frac{1}{\beta},\beta}(\Gamma_{\mr{s}})}+\norm{f_{3}-f_{3}|_{\Gamma_{\mr{s}}}}_{\mcc^{0}(\Omega)}+\norm{\delta p^{*}}_{\mcc^{0}(\Omega)}\right)\nonumber \\
\leq & C\left(\sum_{j=1}^{3}\norm{g_{j}^{\sharp}}_{W^{1-\frac{1}{\beta},\beta}(\Gamma_{\mr{s}})}+\sum_{j=1}^{2}\norm{f_{j}}_{L^{\beta}(\Omega)}+\norm{\delta P_{3}}_{W^{1-\frac{1}{\beta},\beta}(\Gamma_{3})}\right. \\
& \left.\qquad +\norm{\delta\Theta_{4}}_{W^{1-\frac{1}{\beta},\beta}(\Gamma_{4})}+\norm{f_{3}-f_{3}|_{\Gamma_{\mr{s}}}}_{\mcc^{0}(\Omega)}\right)\nonumber 
\end{align}
Finally, it follows from \eqref{eq:linearized_bc_RH_2} that
\begin{equation}
(\delta\psi^{*})'=\frac{1}{\left[\overline{p}\right]}\left(\beta_{4}^{+}\cdot\delta U^{*}-g_{4}\right),\label{eq:linearized_solu_shock_shape}
\end{equation}
which yields
\begin{equation}
\norm{(\delta\psi^{*})'}_{W^{1-\frac{1}{\beta},\beta}(\Gamma_{\mr{s}})}\leq C\left(\norm{g_{4}}_{W^{1-\frac{1}{\beta},\beta}(\Gamma_{\mr{s}})}+\norm{\delta U^{*}}_{W^{1-\frac{1}{\beta},\beta}(\Gamma_{\mr{s}})}\right).\label{eq:linearized_estimate_shock_shape}
\end{equation}
Consequently, the following theorem holds:
\begin{thm}
\label{thm:linearized_problem}

Assume that, for given $\left(\delta U;\delta\psi'\right)$, there
exists a $\delta\xi_{*}$ such that \eqref{eq:linearized_solvability_cond}
holds. Then there exists a solution $\left(\delta U^{*};(\delta\psi^{*})',\delta\xi_{*}\right)$
to the linearized problem \eqref{eq:linearized_system_sub_elliptic_1}—\eqref{eq:linearized_bc_RH_2}.

Moreover, the solution $\left(\delta U^{*};(\delta\psi^{*})'\right)$
satisfies the following estimates, with $\beta>2$:
\begin{eqnarray}
 &  & \norm{\delta p^{*}}_{W_{\beta}^{1}(\Omega)}+\norm{\delta\theta^{*}}_{W_{\beta}^{1}(\Omega)}\\
 & \leq & C\left(\sum_{j=1}^{2}\norm{f_{j}}_{L^{\beta}(\Omega)}+\norm{g_{1}^{\sharp}}_{W^{1-\frac{1}{\beta},\beta}(\Gamma_{\mr{s}})}+\norm{\delta P_{3}}_{W^{1-\frac{1}{\beta},\beta}(\Gamma_{3})}+\norm{\delta\Theta_{4}}_{W^{1-\frac{1}{\beta},\beta}(\Gamma_{4})}\right),\nonumber \\
 &  & \norm{\delta q^{*}}_{W^{1-\frac{1}{\beta},\beta}(\Gamma_{\mr{s}})}+\norm{\delta q^{*}}_{\mcc^{0}(\Omega)}+\norm{\delta S^{*}}_{W^{1-\frac{1}{\beta},\beta}(\Gamma_{\mr{s}})}+\norm{\delta S^{*}}_{\mcc^{0}(\Omega)}\\
 & \leq & C\left(\sum_{j=1}^{3}\norm{g_{j}^{\sharp}}_{W^{1-\frac{1}{\beta},\beta}(\Gamma_{\mr{s}})}+\norm{f_{3}(\xi,\eta)-f_{3}(\overline{\xi}_{*},\eta)}_{\mcc^{0}(\Omega)}+\norm{\delta p^{*}}_{\mcc^{0}(\Omega)}\right),\nonumber \\
 &  & \norm{(\delta\psi^{*})'}_{W^{1-\frac{1}{\beta},\beta}(\Gamma_{\mr{s}})} \leq  C\left(\norm{g_{4}}_{W^{1-\frac{1}{\beta},\beta}(\Gamma_{\mr{s}})}+\norm{\delta U^{*}}_{W^{1-\frac{1}{\beta},\beta}(\Gamma_{\mr{s}})}\right).
\end{eqnarray}
\end{thm}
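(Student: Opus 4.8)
The plan is to solve for the components of $\delta U^{*}$ and for $(\delta\psi^{*})'$ in the decoupled order already prepared in the discussion preceding the statement, so that both the existence assertion and the three displayed estimates follow by assembling Lemma~\ref{lem:first_approx_RH_reformulated}, the appendix result Corollary~\ref{lem:bvp_first_order_elliptic}, and elementary integration along the $\xi$-direction.

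First I would use $\det B_{s}\not=0$ (Lemma~\ref{lem:first_approx_RH_reformulated}) to invert the three linearized R-H conditions \eqref{eq:linearized_bc_RH_1} into the form \eqref{eq:linearized_bc_RH_reform_p}--\eqref{eq:linearized_bc_RH_reform_S}; in particular this supplies the Dirichlet datum $\delta p^{*}=g_{1}^{\sharp}$ on $\Gamma_{\mr{s}}$. Coupled with the first-order elliptic system \eqref{eq:linearized_system_sub_elliptic_1}--\eqref{eq:linearized_system_sub_elliptic_2} and the conditions $\delta\theta^{*}=0$ on $\Gamma_{2}$, $\delta\theta^{*}=\delta\Theta_{4}$ on $\Gamma_{4}$, and $\delta p^{*}=\delta P_{3}$ on $\Gamma_{3}$, this forms a closed mixed boundary value problem for $(\delta p^{*},\delta\theta^{*})^{\top}$ of the type handled in the appendix.

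The crucial step, and the one I expect to be the main obstacle, is the application of Corollary~\ref{lem:bvp_first_order_elliptic}: integrating equation \eqref{eq:linearized_system_sub_elliptic_2} over the rectangle $\Omega$ and inserting the prescribed values of $\delta\theta^{*}$ on $\Gamma_{2}\cup\Gamma_{4}$ and of $\delta p^{*}$ on $\Gamma_{3}\cup\Gamma_{\mr{s}}$, the abstract solvability condition \eqref{eq:first_order_elliptic_exist_cond} of that corollary reduces exactly to \eqref{eq:linearized_solvability_cond}. Since \eqref{eq:linearized_solvability_cond} is assumed in the hypothesis, the corollary delivers a unique $(\delta p^{*},\delta\theta^{*})^{\top}$ satisfying the weighted estimate \eqref{eq:linearized_estimates_pw}; the restriction $\beta>2$ is precisely what renders the corner singularities of $\Omega$ harmless and keeps the constants uniform. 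Making this reduction of the solvability condition fully explicit, and checking that the compatibility relation in the corollary is indeed the integral of \eqref{eq:linearized_system_sub_elliptic_2} after the boundary data are substituted, is the heart of the argument.

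It then remains to solve the two transport equations and read off the shock shape. Equation \eqref{eq:linearized_system_sub_hyper_2} gives $\delta S^{*}(\xi,\eta)=\delta S^{*}(\overline{\xi}_{*},\eta)=g_{3}^{\sharp}(\eta)$ via \eqref{eq:linearized_bc_RH_reform_S}, hence \eqref{eq:linearized_estimate_S}; integrating \eqref{eq:linearized_system_sub_hyper_1} from $\overline{\xi}_{*}$ to $\xi$ with datum \eqref{eq:linearized_bc_RH_reform_w} on $\Gamma_{\mr{s}}$ produces \eqref{eq:linearized_solu_q} and then \eqref{eq:linearized_estimate_q}, using the embedding $W_{\beta}^{1}(\Omega)\Embed\mcc^{0}(\overline{\Omega})$ for $\beta>2$ to control $\norm{\delta p^{*}}_{\mcc^{0}(\Omega)}$. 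Finally \eqref{eq:linearized_bc_RH_2} determines $(\delta\psi^{*})'$ algebraically as in \eqref{eq:linearized_solu_shock_shape}, and the trace inequality bounding $\norm{\delta U^{*}}_{W^{1-1/\beta,\beta}(\Gamma_{\mr{s}})}$ by $\norm{\delta U^{*}}_{W_{\beta}^{1}(\Omega)}$ (together with the bounds for $\delta q^{*},\delta S^{*}$ on $\Gamma_{\mr{s}}$ already obtained) yields \eqref{eq:linearized_estimate_shock_shape}. Collecting \eqref{eq:linearized_estimates_pw}, \eqref{eq:linearized_estimate_S}, \eqref{eq:linearized_estimate_q} and \eqref{eq:linearized_estimate_shock_shape} gives the three estimates in the statement, completing the proof.
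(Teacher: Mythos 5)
Your proposal is correct and follows essentially the same route as the paper: the paper's own justification is precisely the discussion preceding the theorem statement, namely inverting $B_{s}$ to get the Dirichlet datum $\delta p^{*}=g_{1}^{\sharp}$ on $\Gamma_{\mr{s}}$, applying the appendix corollary to the closed elliptic problem for $(\delta p^{*},\delta\theta^{*})$ under the solvability condition \eqref{eq:linearized_solvability_cond}, integrating the two transport equations from $\Gamma_{\mr{s}}$, and reading off $(\delta\psi^{*})'$ from \eqref{eq:linearized_bc_RH_2}. Your explicit check that \eqref{eq:first_order_elliptic_exist_cond} specializes to \eqref{eq:linearized_solvability_cond} is the right point to emphasize and is consistent with the paper's identification of the coefficients $a_{1}=\overline{q}_{+}$ and $a_{2}=\frac{1}{\overline{\rho}_{+}\overline{q}_{+}}\cdot\frac{1-\overline{M}_{+}^{2}}{\overline{\rho}_{+}\overline{q}_{+}^{2}}$.
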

\begin{rem}
\label{rem:linearized_first_approx}

The argument in Section 3 indicates that
\[
\left(\dot{U}_{+};\ \dot{\psi}'\ |\ 0\right) = \mcp_{a}\left(\dot{\mcf};\dot{\mcg};\dot{P}_{e};\dot{\Theta}_{N}\right),
\]
where $\dot{\mcf}\defs\left(0,0,-\Phi(\overline{U}_{+})\right)$,
and $\dot{\mcg}\defs\left(\dot{g}_{1},\dot{g}_{2},\dot{g}_{3},\dot{g}_{4}\right)$.
Moreover, $\delta\xi_{*}=0$ is a solution to the equation \eqref{eq:linearized_solvability_cond}.
\end{rem}
\bigskip{}

\section{The Well-Posedness and Contractiveness of the Iteration Mapping}


By Theorem \ref{thm:linearized_problem}, in order to carry out the
iteration scheme, one needs to show that, for given $\left(\delta U;\delta\psi'\right)$,
$\delta\xi_{*}$ can be determined by the condition \eqref{eq:linearized_solvability_cond}.
To this end, one needs to specify a suitable set for $\left(\delta U;\delta\psi'\right)$.

Let $\epsilon>0$, $\beta>2$, and define
\[
\fk(\epsilon)\defs\set{\left(\delta U;\delta\psi'\right):\ \norm{\delta U}_{(\Omega;\Gamma_{\mr{s}})}+\norm{\delta\psi'}_{W^{1-\frac{1}{\beta},\beta}(\Gamma_{\mr{s}})}\leq\epsilon}.
\]
We are going to show that $\delta\xi_{*}$ can be determined by \eqref{eq:linearized_solvability_cond}
for sufficiently small $\epsilon$. In fact, the following lemma holds.
\begin{lem}
\label{lem:iteration_approx_location}

There exists a suitably small positive constant $\sigma_{1}$ such that for
any $0<\sigma\leq\sigma_{1}$, if $\left(\delta U-\dot{U}_{+};\delta\psi'-\dot{\psi}'\right)\in\fk(\frac{1}{2}\sigma^{3/2})$,
there exists a unique solution $\delta\xi_{*}$ to the equation \eqref{eq:linearized_solvability_cond} with the estimate
\begin{equation}
\abs{\delta\xi_{*}}\leq C_{*}\sigma,\label{eq:iteration_approx_location_estimate}
\end{equation}
where the constant $C_{*}$ depends on $\overline{\xi}_{*}$ and the value
of $ \displaystyle \frac{1}{\abs{\Theta(\overline{\xi}_{*})}} $.
\end{lem}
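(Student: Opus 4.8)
The plan is to read the solvability relation \eqref{eq:linearized_solvability_cond} as a single scalar equation $N(\delta\xi_{*})=0$ in the unknown $\delta\xi_{*}$, with the given pair $(\delta U;\delta\psi')$ (subject to $(\delta U-\dot U_{+};\delta\psi'-\dot\psi')\in\fk(\tfrac12\sigma^{3/2})$) and the small parameter $\sigma$ frozen, and to solve it by a contraction-mapping (equivalently, implicit-function) argument anchored at $\delta\xi_{*}=0$. By Remark \ref{rem:linearized_first_approx}, when $(\delta U;\delta\psi')=(\dot U_{+};\dot\psi')$ the value $\delta\xi_{*}=0$ solves \eqref{eq:linearized_solvability_cond}; this is precisely the defining relation $R(\overline\xi_{*})=\dot P_{*}$ of $\overline\xi_{*}$, see \eqref{eq:first_approx_criterion_equation}. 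Hence the first step is to note that for a general admissible pair one has $\abs{N(0)}\le C\sigma^{2}$: the data $\delta U,\delta\psi'$ enter $f_{2}$, $g_{1}^{\sharp}$ and $\delta P_{3}$ only through coefficients of size $O(\sigma)$ (such as $\partial_{\xi}\delta p$, $\partial_{\xi}\delta\theta$, the traces of $\delta U$ on $\Gamma_{3}$ and $\Gamma_{\mr{s}}$, the jump $[\overline p]$) together with genuinely quadratic remainders, so passing from $(\dot U_{+};\dot\psi')$ to $(\delta U;\delta\psi')$ changes $N(0)$ by at most $C\sigma\cdot\sigma^{3/2}+C\sigma^{2}=O(\sigma^{2})$; here one also invokes Theorem \ref{thm:supersonic_flow}, $\norm{\delta U_{-}-\dot U_{-}}_{\mcc^{1,\alpha}(\overline\Omega)}\le\hat C_{L}\sigma^{2}$, to control the supersonic trace along the shock.

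The main step is to determine the leading $\delta\xi_{*}$-dependence of $N$. It enters through three channels. First, in $\int_{\overline\xi_{*}}^{L}\delta\Theta_{4}(\xi;\delta\xi_{*})\dif\xi$ the affine substitution $v=\tfrac{L-\xi_{*}}{L-\overline\xi_{*}}\xi+\tfrac{L\delta\xi_{*}}{L-\overline\xi_{*}}$, which carries $[\overline\xi_{*},L]$ onto $[\xi_{*},L]$ with $\xi_{*}=\overline\xi_{*}+\delta\xi_{*}$, turns the integral into $\sigma\tfrac{L-\overline\xi_{*}}{L-\xi_{*}}\int_{\xi_{*}}^{L}\Theta$, whose $\delta\xi_{*}$-derivative at $\delta\xi_{*}=0$ is $\sigma\bigl(\tfrac{1}{L-\overline\xi_{*}}\int_{\overline\xi_{*}}^{L}\Theta-\Theta(\overline\xi_{*})\bigr)$. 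Second, $g_{1}^{\sharp}$ depends on $\delta\xi_{*}$ through $U_{-}^{(\psi',\xi_{*})}=U_{-}(\psi(\eta),\eta)$ with $\psi(\eta)=\overline\xi_{*}+\delta\xi_{*}+O(\sigma)$; linearising the $G_{j}$ and using Lemma \ref{lem:first_approx_RH_reformulated} together with \eqref{eq:first_approx_super_int_p} gives $\tfrac{1}{\overline\rho_{+}\overline q_{+}}\cdot\tfrac{1-\overline M_{+}^{2}}{\overline\rho_{+}\overline q_{+}^{2}}\int_{0}^{1}g_{1}^{\sharp}\dif\eta=(1-\dot K)\sigma\int_{0}^{\xi_{*}}\Theta+O(\sigma^{2})$, of $\delta\xi_{*}$-derivative $(1-\dot K)\sigma\Theta(\overline\xi_{*})+O(\sigma^{3/2})$. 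Third, inside $\int_{\Omega}f_{2}$ the coordinate-stretching term $\tfrac{\delta\xi_{*}-\int_{\eta}^{1}\delta\psi'}{L-\psi(\eta)}\bigl(\tfrac{\sin\theta}{\rho q}\partial_{\xi}\theta+\tfrac{\cos\theta}{\rho q}\cdot\tfrac{1-M^{2}}{\rho q^{2}}\partial_{\xi}p\bigr)$ contributes, after integrating $\partial_{\xi}\delta p$ in $\xi$ and using $\delta p|_{\Gamma_{3}}=\delta P_{3}+O(\sigma^{3/2})$, $\delta p|_{\Gamma_{\mr{s}}}=g_{1}^{\sharp}+O(\sigma^{3/2})$, a $\delta\xi_{*}$-derivative equal to $\tfrac{\sigma}{L-\overline\xi_{*}}\int_{\overline\xi_{*}}^{L}\Theta+O(\sigma^{3/2})$ at $\delta\xi_{*}=0$, while $\delta P_{3}$ is $\delta\xi_{*}$-independent. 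Adding the three (recalling that $N$ contains $-\int_{\Omega}f_{2}$) and using the identity $R(\overline\xi_{*})-(1-\dot K)\int_{0}^{\overline\xi_{*}}\Theta=\int_{\overline\xi_{*}}^{L}\Theta$, the terms $\pm\tfrac{1}{L-\overline\xi_{*}}\int_{\overline\xi_{*}}^{L}\Theta$ produced by the stretching $\Pi_{\psi}$ cancel, and one is left with
\[
\partial_{\delta\xi_{*}}N=-\dot K\,\Theta(\overline\xi_{*})\,\sigma+O\bigl(\sigma^{3/2}+\sigma\abs{\delta\xi_{*}}\bigr),
\]
where $\dot K\,\Theta(\overline\xi_{*})=-R'(\overline\xi_{*})\ne0$ by the standing hypothesis $\Theta(\overline\xi_{*})\ne0$ and $\dot K>0$ is fixed by $\overline U_{\pm}$.

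The last step is routine. Writing $N(\delta\xi_{*})=-\dot K\Theta(\overline\xi_{*})\sigma\,\delta\xi_{*}+\tilde N(\delta\xi_{*})$ with $\abs{\tilde N(0)}\le C\sigma^{2}$ and $\abs{\partial_{\delta\xi_{*}}\tilde N}\le C(\sigma^{3/2}+\sigma\abs{\delta\xi_{*}})$, the equation $N=0$ becomes the fixed-point problem $\delta\xi_{*}=\Lambda(\delta\xi_{*})\defs\tilde N(\delta\xi_{*})/\bigl(\dot K\Theta(\overline\xi_{*})\sigma\bigr)$ on $I_{*}=\{\abs{\delta\xi_{*}}\le C_{*}\sigma\}$ with $C_{*}\defs 2C/(\dot K\abs{\Theta(\overline\xi_{*})})$; for $0<\sigma\le\sigma_{1}$ with $\sigma_{1}$ small (depending on $\overline\xi_{*}$ and $1/\abs{\Theta(\overline\xi_{*})}$) one has $\Lambda(I_{*})\subset I_{*}$ and $\Lambda$ contractive on $I_{*}$, so \eqref{eq:linearized_solvability_cond} has a unique solution $\delta\xi_{*}\in I_{*}$, which is exactly \eqref{eq:iteration_approx_location_estimate}. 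The delicate point is the middle step: separating the genuinely linear-in-$\delta\xi_{*}$ part of \eqref{eq:linearized_solvability_cond}, verifying that the $\Pi_{\psi}$-stretching contributions of $\delta\Theta_{4}$ and of $f_{2}$ cancel so that only $-R'(\overline\xi_{*})\delta\xi_{*}$ survives at order $\sigma$, and checking that every remaining term is $O(\sigma^{2})$, quadratic in $\delta\xi_{*}$, or carries the extra $\sigma^{1/2}$ gain coming from $(\delta U-\dot U_{+};\delta\psi'-\dot\psi')\in\fk(\tfrac12\sigma^{3/2})$, so that $\Lambda$ has Lipschitz constant $o(1)$ relative to $\dot K\abs{\Theta(\overline\xi_{*})}$ as $\sigma\to0$.
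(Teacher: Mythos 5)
Your proposal is correct and follows essentially the same route as the paper: both anchor the solvability functional at $\delta\xi_{*}=0$ using the defining relation $R(\overline{\xi}_{*})=\dot{P}_{*}$ and the hypothesis $\left(\delta U-\dot{U}_{+};\delta\psi'-\dot{\psi}'\right)\in\fk(\frac{1}{2}\sigma^{3/2})$ to get an $O(\sigma^{2})$ residual, and both extract the same nondegenerate linear coefficient $-\sigma\dot{K}\Theta(\overline{\xi}_{*})$ from the three $\delta\xi_{*}$-dependent channels ($\delta\Theta_{4}$, $g_{1}^{\sharp}$, $f_{2}$), with the $\pm\frac{\sigma}{L-\overline{\xi}_{*}}\int_{\overline{\xi}_{*}}^{L}\Theta$ stretching terms cancelling exactly as you claim. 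The only cosmetic difference is that you conclude by an explicit contraction on $\{\abs{\delta\xi_{*}}\leq C_{*}\sigma\}$ while the paper invokes the implicit function theorem after deriving the expansion \eqref{eq:iteration_mci_expansion}; these are equivalent.
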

\begin{proof}
For any $\left(\delta U-\dot{U}_{+};\delta\psi'-\dot{\psi}'\right)\in\fk(\frac{1}{2}\sigma^{3/2})$,
it follows from Theorem \ref{thm:first_approx_existence} that
\begin{equation}
\norm{\delta U}_{(\Omega;\Gamma_{\mr{s}})}+\norm{\delta\psi'}_{W^{1-\frac{1}{\beta},\beta}(\Gamma_{\mr{s}})}\leq C_{+}\sigma,\label{eq:iteration_approx_pre_states}
\end{equation}
where $C_{+}=\dot{C}_{+}+1$.

Set
\begin{align}
\mci(\delta\xi_{*};\delta U,\delta\psi';\delta U_{-}) & \defs-\int_{\Omega}f_{2}\left(\delta U;\delta\psi',\delta\xi_{*}\right)\dif\xi\dif\eta+\int_{\overline{\xi}_{*}}^{L}\delta\Theta_{4}(\xi;\delta\xi_{*})\dif\xi\nonumber \\
 & +\frac{1}{\overline{\rho}_{+}\overline{q}_{+}}\cdot\frac{1-\overline{M}_{+}^{2}}{\overline{\rho}_{+}\overline{q}_{+}^{2}}\int_{0}^{1}\left\{ g_{1}^{\sharp}\left(\delta U,\delta U_{-};\delta\psi',\delta\xi_{*}\right)-\delta P_{3}(\eta;\delta U)\right\} \dif\eta.\label{eq:iteration_mci}
\end{align}
Then it can be checked easily that 
\begin{equation}
\mci(0;0,0;\dot{U}_{-})=0.\label{eq:iteration_approx_location_bg}
\end{equation}
We claim that 
\begin{equation}
\frac{\partial\mci}{\partial(\delta\xi_{*})}(0;0,0;\dot{U}_{-})\not=0.\label{eq:iteration_approx_location_update_cond}
\end{equation}
Thus, by applying the implicit function theorem, there exists a solution
$\delta\xi_{*}$ to the equation \eqref{eq:linearized_solvability_cond}.

To prove \eqref{eq:iteration_approx_location_update_cond}, one needs to expand
$\mci$ near $(0;0,0;\dot{U}_{-})$. First, one estimates
the terms in $\mci$ one by one.

Since  $\overline{\rho}_{+}\overline{q}_{+}=1$, so
\begin{align*}
\delta P_{3}(\eta;\delta U) 
 & =\sigma P(\eta)+\sigma\set{P\left(\int_{0}^{\eta}\frac{1}{\left(\rho q\cos\theta\right)(L,s)}\dif s\right)-P\left(\int_{0}^{\eta}\frac{1}{\left(\overline{\rho}_{+}\overline{q}_{+}\right)(L,s)}\dif s\right)},
\end{align*}
which implies
\[
\delta P_{3}(\eta;\delta U)=\sigma P(\eta)+\mco(1)\sigma^{2},
\]
where $\mco(1)$ depends on $C_{+}$ and $\norm{P'}_{L^{\infty}(\Gamma_{3})}$.
Thus, 
\begin{equation}
\int_{0}^{1}\delta P_{3}(\eta;\delta U)\dif\eta=\sigma\int_{0}^{1}P(\eta)\dif\eta+\mco(1)\sigma^{2}.\label{eq:iteration_mci_P}
\end{equation}

Moreover, direct computations yield 
\begin{equation}
\int_{\overline{\xi}_{*}}^{L}\delta\Theta_{4}(\xi;\delta\xi_{*})\dif\xi=\sigma\int_{\overline{\xi}_{*}}^{L}\Theta(\zeta)\dif\zeta+\sigma\int_{\overline{\xi}_{*}+\delta\xi_{*}}^{\overline{\xi}_{*}}\Theta(\zeta)\dif\zeta+\sigma\cdot\frac{\delta\xi_{*}}{L-\xi_{*}}\int_{\overline{\xi}_{*}+\delta\xi_{*}}^{L}\Theta(\zeta)\dif\zeta.\label{eq:iteration_mci_Theta}
\end{equation}

To estimate $g_{1}^{\sharp}$, one notes for any $j=1,2,3$,
\begin{align*}
 & g_{j}(\delta U,\delta U_{-};\delta\psi',\delta\xi_{*})\\
= & \beta_{j}^{+}\cdot\delta U-G_{j}\left(U,U_{-}^{(\psi',\xi_{*})}\right)\\
= & \left(\beta_{j}^{+}\cdot\delta U+\beta_{j}^{-}\cdot\dot{U}_{-}|_{(\overline{\xi}_{*},\eta)}-G_{j}\left(U,U_{-}^{(\psi',\xi_{*})}\right)\right)+\dot{g}_{j}(\dot{U}_{-}|_{(\overline{\xi}_{*},\eta)})\\
= & \left(\beta_{j}^{+}\cdot\delta U+\beta_{j}^{-}\cdot\delta U_{-}^{(\delta\psi',\delta\xi_{*})}-G_{j}\left(U,U_{-}^{(\psi',\xi_{*})}\right)\right)\\
 & -\beta_{j}^{-}\cdot\left(\delta U_{-}^{(\delta\psi',\delta\xi_{*})}-\dot{U}_{-}|_{(\overline{\xi}_{*},\eta)}\right)+\dot{g}_{j},
\end{align*}
where $\delta U_{-}^{(\delta\psi',\delta\xi_{*})}\defs\delta U_{-}|_{\left(\overline{\xi}_{*}+\delta\xi_{*}-\int_{\eta}^{1}\delta\psi'(s)\dif s,\eta\right)}$.
Since
\begin{align*}
 & \beta_{j}^{+}\cdot\delta U+\beta_{j}^{-}\cdot\delta U_{-}^{(\delta\psi',\delta\xi_{*})}-G_{j}\left(U,U_{-}^{(\psi',\xi_{*})}\right)\\
= & \frac{1}{2}\int_{0}^{1}D^{2}G_{j}\left(\overline{U}_{+}+s\delta U;\overline{U}_{-}+s\delta U_{-}\right)\dif s\left(\delta U;\delta U_{-}\right)^{2}\\
= & \mco(1)\sigma^{2},
\end{align*}
where $\mco(1)$ depends on $C_{+}$, and 
\begin{align*}
 & \delta U_{-}^{(\delta\psi',\delta\xi_{*})}-\dot{U}_{-}|_{(\overline{\xi}_{*},\eta)}\\
= & \left(\delta U_{-}^{(\delta\psi',\delta\xi_{*})}-\dot{U}_{-}|_{\left(\overline{\xi}_{*}+\delta\xi_{*}-\int_{\eta}^{1}\delta\psi'(s)\dif s,\eta\right)}\right)\\
 & +\left(\dot{U}_{-}|_{\left(\overline{\xi}_{*}+\delta\xi_{*}-\int_{\eta}^{1}\delta\psi'(s)\dif s,\eta\right)}-\dot{U}_{-}|_{\left(\overline{\xi}_{*}+\delta\xi_{*},\eta\right)}\right)\\
 & +\left(\dot{U}_{-}|_{\left(\overline{\xi}_{*}+\delta\xi_{*},\eta\right)}-\dot{U}_{-}|_{\left(\overline{\xi}_{*},\eta\right)}\right)\\
= & \mco(1)\sigma^{2}+\left(\dot{U}_{-}|_{\left(\overline{\xi}_{*}+\delta\xi_{*},\eta\right)}-\dot{U}_{-}|_{\left(\overline{\xi}_{*},\eta\right)}\right),
\end{align*}
where $\mco(1)$ depends on $\hat{C}_{L}$, $C_{L}$ and $C_{+}$,
one gets that
\begin{align*}
 & g_{j}(\delta U,\delta U_{-};\delta\psi',\delta\xi_{*})\\
= & \mco(1)\sigma^{2}-\beta_{j}^{-}\cdot\left(\dot{U}_{-}|_{\left(\overline{\xi}_{*}+\delta\xi_{*},\eta\right)}-\dot{U}_{-}|_{\left(\overline{\xi}_{*},\eta\right)}\right)+\dot{g}_{j}(\dot{U}_{-}|_{(\overline{\xi}_{*},\eta)})\\
= & \mco(1)\sigma^{2}-\beta_{j}^{-}\cdot\dot{U}_{-}|_{\left(\overline{\xi}_{*}+\delta\xi_{*},\eta\right)}.
\end{align*}
Therefore, it holds that
\[
g_{1}^{\sharp}\left(\delta U,\delta U_{-};\delta\psi',\delta\xi_{*}\right)|_{\left(\overline{\xi}_{*},\eta\right)}=\dot{g}_{1}^{\sharp}|_{\left(\overline{\xi}_{*}+\delta\xi_{*},\eta\right)}+\mco(1)\sigma^{2},
\]
where $\mco(1)$ depends on $\hat{C}_{L}$, $C_{L}$ , $C_{+}$ and
$\norm{\Theta}_{L^{\infty}(\Gamma_{4})}$. Thus,
\begin{equation}
\int_{0}^{1}g_{1}^{\sharp}\left(\delta U,\delta U_{-};\delta\psi',\delta\xi_{*}\right)\dif\eta=\int_{0}^{1}\dot{g}_{1}^{\sharp}|_{\left(\overline{\xi}_{*}+\delta\xi_{*},\eta\right)}\dif\eta+\mco(1)\sigma^{2}.\label{eq:iteration_mci_g}
\end{equation}

It remains to estimate $f_{2}$. Note that 
\begin{align*}
 & f_{2}\left(\delta U;\delta\psi',\delta\xi_{*}\right)\\
= & \frac{\sin\theta}{\rho q}\partial_{\xi}\delta\theta+\left(\frac{\cos\theta}{\rho q}\cdot\frac{1-M^{2}}{\rho q^{2}}-\frac{1}{\overline{\rho}_{+}\overline{q}_{+}}\cdot\frac{1-\overline{M}_{+}^{2}}{\overline{\rho}_{+}\overline{q}_{+}^{2}}\right)\partial_{\xi}\delta p\\
 & -\frac{L-\xi}{L-\psi(\eta)}\cdot\delta\psi'(\eta)\partial_{\xi}\delta\theta+\frac{\delta\xi_{*}-\int_{\eta}^{1}\delta\psi'(\tau)\dif\tau}{L-\psi(\eta)}\cdot\frac{\sin\theta}{\rho q}\partial_{\xi}\delta\theta\\
 & -\frac{\int_{\eta}^{1}\delta\psi'(\tau)\dif\tau}{L-\psi(\eta)}\cdot\frac{\cos\theta}{\rho q}\cdot\frac{1-M^{2}}{\rho q^{2}}\partial_{\xi}\delta p+\frac{\delta\xi_{*}}{L-\psi(\eta)}\cdot\frac{\cos\theta}{\rho q}\cdot\frac{1-M^{2}}{\rho q^{2}}\partial_{\xi}\delta p.
\end{align*}
It follows from \eqref{eq:iteration_approx_pre_states} that
\begin{align}
 & \abs{\int_{\Omega}\frac{\sin\theta}{\rho q}\partial_{\xi}\delta\theta+\left(\frac{\cos\theta}{\rho q}\cdot\frac{1-M^{2}}{\rho q^{2}}-\frac{1}{\overline{\rho}_{+}\overline{q}_{+}}\cdot\frac{1-\overline{M}_{+}^{2}}{\overline{\rho}_{+}\overline{q}_{+}^{2}}\right)\partial_{\xi}\delta p\dif\xi\dif\eta}\nonumber \\
\leq & \mco(1)\norm{\delta U}_{W_{\beta}^{1}(\Omega)}^{2}=\mco(1)\sigma^{2},\label{eq:iteration_mci_f-1}
\end{align}
where $\mco(1)$ depends on $\beta>2$, $\overline{U}_{+}$ and $C_{+}$.
Also, it holds that
\begin{align}
 & \abs{\int_{\Omega}-\frac{L-\xi}{L-\psi(\eta)}\cdot\delta\psi'(\eta)\partial_{\xi}\delta\theta+\frac{\delta\xi_{*}-\int_{\eta}^{1}\delta\psi'(\tau)\dif\tau}{L-\psi(\eta)}\cdot\frac{\sin\theta}{\rho q}\partial_{\xi}\delta\theta\dif\xi\dif\eta}\nonumber \\
\leq & \mco(1)\norm{\delta\psi'}_{L^{\infty}}\norm{\delta\theta}_{W_{\beta}^{1}(\Omega)}+\mco(1)\norm{\delta\psi'}_{L^{\infty}}\norm{\delta U}_{W_{\beta}^{1}(\Omega)}^{2}+\mco(1)\norm{\delta\theta}_{W_{\beta}^{1}(\Omega)}^{2}\delta\xi_{*}\nonumber \\
\leq & \mco(1)\sigma^{2}+\mco(1)\sigma^{2}\cdot\delta\xi_{*},\label{eq:iteration_mci_f-2}
\end{align}
where $\mco(1)$ depends on $\overline{\xi}_{*}$, $\overline{U}_{+}$,
and $C_{+}$. Moreover,
\begin{align}
 & \abs{\int_{\Omega}-\frac{\int_{\eta}^{1}\delta\psi'(\tau)\dif\tau}{L-\psi(\eta)}\cdot\frac{\cos\theta}{\rho q}\cdot\frac{1-M^{2}}{\rho q^{2}}\partial_{\xi}\delta p\dif\xi\dif\eta}\nonumber \\
\leq & \mco(1)\norm{\delta\psi'}_{L^{\infty}}\norm{\delta U}_{W_{\beta}^{1}(\Omega)}=\mco(1)\sigma^{2},\label{eq:iteration_mci_f-3}
\end{align}
where $\mco(1)$ depends on $\overline{\xi}_{*}$, $\overline{U}_{+}$,
and $C_{+}$. Finally, since
\begin{align*}
 & \frac{\delta\xi_{*}}{L-\psi(\eta)}\cdot\frac{\cos\theta}{\rho q}\cdot\frac{1-M^{2}}{\rho q^{2}}\partial_{\xi}\delta p\\
= & \frac{\delta\xi_{*}}{L-\psi(\eta)}\cdot\left(\frac{\cos\theta}{\rho q}\cdot\frac{1-M^{2}}{\rho q^{2}}-\frac{1}{\overline{\rho}_{+}\overline{q}_{+}}\cdot\frac{1-\overline{M}_{+}^{2}}{\overline{\rho}_{+}\overline{q}_{+}^{2}}\right)\partial_{\xi}\delta p\\
 & +\frac{\delta\xi_{*}}{L-\psi(\eta)}\cdot\frac{1}{\overline{\rho}_{+}\overline{q}_{+}}\cdot\frac{1-\overline{M}_{+}^{2}}{\overline{\rho}_{+}\overline{q}_{+}^{2}}\partial_{\xi}\left(\delta p-\dot{p}_{+}\right)\\
 & -\frac{\delta\xi_{*}\cdot\int_{\eta}^{1}\delta\psi'(\tau)\dif\tau}{\left(L-\psi(\eta)\right)\left(L-\xi_{*}\right)}\cdot\frac{1}{\overline{\rho}_{+}\overline{q}_{+}}\cdot\frac{1-\overline{M}_{+}^{2}}{\overline{\rho}_{+}\overline{q}_{+}^{2}}\partial_{\xi}\dot{p}_{+}\\
 & +\frac{\delta\xi_{*}}{L-\xi_{*}}\cdot\frac{1}{\overline{\rho}_{+}\overline{q}_{+}}\cdot\frac{1-\overline{M}_{+}^{2}}{\overline{\rho}_{+}\overline{q}_{+}^{2}}\partial_{\xi}\dot{p}_{+},
\end{align*}
it then follows from \eqref{eq:first_approx_sub_2} that, 
\[
\int_{\Omega}\frac{1}{\overline{\rho}_{+}\overline{q}_{+}}\cdot\frac{1-\overline{M}_{+}^{2}}{\overline{\rho}_{+}\overline{q}_{+}^{2}}\partial_{\xi}\dot{p}_{+}\dif\xi\dif\eta=\int_{\overline{\xi}_{*}}^{L}\int_{0}^{1}\partial_{\eta}\dot{\theta}_{+}\dif\eta\dif\xi=\sigma\int_{\overline{\xi}_{*}}^{L}\Theta(\xi)\dif\xi,
\]
thus
\begin{align}
 & \int_{\Omega}\frac{\delta\xi_{*}}{L-\psi(\eta)}\cdot\frac{\cos\theta}{\rho q}\cdot\frac{1-M^{2}}{\rho q^{2}}\partial_{\xi}\delta p\dif\xi\dif\eta\nonumber \\
= & \mco(1)\norm{\delta U}_{L^{\infty}}\norm{\delta p}_{W_{\beta}^{1}(\Omega)}+\mco(1)\norm{\delta p-\dot{p}_{+}}_{W_{\beta}^{1}(\Omega)}\cdot\delta\xi_{*}\nonumber \\
 & +\mco(1)\norm{\delta\psi'}_{L^{\infty}}\norm{\dot{p}_{+}}_{W_{\beta}^{1}(\Omega)}\cdot\delta\xi_{*}+\frac{\delta\xi_{*}}{L-\xi_{*}}\cdot\sigma\int_{\overline{\xi}_{*}}^{L}\Theta(\xi)\dif\xi\nonumber \\
= & \frac{\delta\xi_{*}}{L-\xi_{*}}\cdot\sigma\int_{\overline{\xi}_{*}}^{L}\Theta(\xi)\dif\xi+\mco(1)\sigma^{\frac{3}{2}}\cdot\delta\xi_{*}+\mco(1)\sigma^{2},\label{eq:iteration_mci_f-4}
\end{align}
where $\mco(1)$ depends on $\overline{\xi}_{*}$, $\beta$, $\overline{U}_{+}$,
and $C_{+}$. Therefore, integrating the estimates \eqref{eq:iteration_mci_f-1}—\eqref{eq:iteration_mci_f-4} yields that
\begin{equation}
\int_{\Omega}f_{2}\left(\delta U;\delta\psi',\delta\xi_{*}\right)\dif\xi\dif\eta=\frac{\delta\xi_{*}}{L-\xi_{*}}\cdot\sigma\int_{\overline{\xi}_{*}}^{L}\Theta(\xi)\dif\xi+\mco(1)\sigma^{\frac{3}{2}}\cdot\delta\xi_{*}+\mco(1)\sigma^{2},\label{eq:iteration_mci_f}
\end{equation}
where $\mco(1)$ depends on $\overline{\xi}_{*}$, $\beta$, $\overline{U}_{+}$,
and $C_{+}$.

Substituting \eqref{eq:iteration_mci_P}, \eqref{eq:iteration_mci_Theta},
\eqref{eq:iteration_mci_g}, and \eqref{eq:iteration_mci_f} into
\eqref{eq:iteration_mci} gives
\begin{align*}
 & \mci(\delta\xi_{*};\delta U,\delta\psi';\delta U_{-})\\
= & -\frac{\delta\xi_{*}}{L-\xi_{*}}\cdot\sigma\int_{\overline{\xi}_{*}}^{L}\Theta(\xi)\dif\xi+\mco(1)\sigma^{\frac{3}{2}}\cdot\delta\xi_{*}+\mco(1)\sigma^{2}\\
 & +\sigma\int_{\overline{\xi}_{*}}^{L}\Theta(\zeta)\dif\zeta+\sigma\int_{\overline{\xi}_{*}+\delta\xi_{*}}^{\overline{\xi}_{*}}\Theta(\zeta)\dif\zeta+\sigma\cdot\frac{\delta\xi_{*}}{L-\xi_{*}}\int_{\overline{\xi}_{*}+\delta\xi_{*}}^{L}\Theta(\zeta)\dif\zeta\\
 & +\frac{1}{\overline{\rho}_{+}\overline{q}_{+}}\cdot\frac{1-\overline{M}_{+}^{2}}{\overline{\rho}_{+}\overline{q}_{+}^{2}}\int_{0}^{1}\dot{g}_{1}^{\sharp}|_{\left(\overline{\xi}_{*}+\delta\xi_{*},\eta\right)}\dif\eta+\mco(1)\sigma^{2}\\
 & -\frac{1}{\overline{\rho}_{+}\overline{q}_{+}}\cdot\frac{1-\overline{M}_{+}^{2}}{\overline{\rho}_{+}\overline{q}_{+}^{2}}\sigma\int_{0}^{1}P(\eta)\dif\eta+\mco(1)\sigma^{2}\\
= & \frac{1}{\overline{\rho}_{+}\overline{q}_{+}}\cdot\frac{1-\overline{M}_{+}^{2}}{\overline{\rho}_{+}\overline{q}_{+}^{2}}\int_{0}^{1}\left(\dot{g}_{1}^{\sharp}|_{\left(\overline{\xi}_{*},\eta\right)}-\sigma P(\eta)\right)\dif\eta+\sigma\int_{\overline{\xi}_{*}}^{L}\Theta(\zeta)\dif\zeta\\
 & +\frac{1}{\overline{\rho}_{+}\overline{q}_{+}}\cdot\frac{1-\overline{M}_{+}^{2}}{\overline{\rho}_{+}\overline{q}_{+}^{2}}\int_{0}^{1}\left(\dot{g}_{1}^{\sharp}|_{\left(\overline{\xi}_{*}+\delta\xi_{*},\eta\right)}-\dot{g}_{1}^{\sharp}|_{\left(\overline{\xi}_{*},\eta\right)}\right)\dif\eta+\sigma\int_{\overline{\xi}_{*}+\delta\xi_{*}}^{\overline{\xi}_{*}}\Theta(\zeta)\dif\zeta\\
 & +\sigma\cdot\frac{\delta\xi_{*}}{L-\xi_{*}}\left(-\int_{\overline{\xi}_{*}}^{L}\Theta(\xi)\dif\xi+\int_{\overline{\xi}_{*}+\delta\xi_{*}}^{L}\Theta(\zeta)\dif\zeta\right)+\mco(1)\sigma^{\frac{3}{2}}\cdot\delta\xi_{*}+\mco(1)\sigma^{2}.
\end{align*}
Then \eqref{eq:first_approx_criterion_exit_pressure} in Lemma \ref{lem:first_approx_criterion}, \eqref{eq:first_approx_super_int_p} and \eqref{eq:first_approx_p}  yield that
\begin{align*}
 & \mci(\delta\xi_{*};\delta U,\delta\psi';\delta U_{-})\\
= & -\sigma\cdot\frac{1}{\overline{\rho}_{+}\overline{q}_{+}}\cdot\frac{1-\overline{M}_{+}^{2}}{\overline{\rho}_{+}\overline{q}_{+}^{2}}\int_{0}^{1}P(\eta)\dif\eta+\sigma\left(1-\dot{K}\right)\int_{0}^{\overline{\xi}_{*}}\Theta(\zeta)\dif\zeta+\sigma\int_{\overline{\xi}_{*}}^{L}\Theta(\zeta)\dif\zeta\\
 & +\sigma\left(1-\dot{K}\right)\left(\int_{0}^{\overline{\xi}_{*}+\delta\xi_{*}}\Theta(\zeta)\dif\zeta-\int_{0}^{\overline{\xi}_{*}}\Theta(\zeta)\dif\zeta\right)+\sigma\int_{\overline{\xi}_{*}+\delta\xi_{*}}^{\overline{\xi}_{*}}\Theta(\zeta)\dif\zeta\\
 & +\sigma\cdot\frac{\delta\xi_{*}}{L-\xi_{*}}\int_{\overline{\xi}_{*}+\delta\xi_{*}}^{\overline{\xi}_{*}}\Theta(\zeta)\dif\zeta+\mco(1)\sigma^{\frac{3}{2}}\cdot\delta\xi_{*}+\mco(1)\sigma^{2}\\
= & -\sigma\dot{K}\int_{\overline{\xi}_{*}}^{\overline{\xi}_{*}+\delta\xi_{*}}\Theta(\zeta)\dif\zeta+\sigma\cdot\frac{\delta\xi_{*}}{L-\xi_{*}}\int_{\overline{\xi}_{*}+\delta\xi_{*}}^{\overline{\xi}_{*}}\Theta(\zeta)\dif\zeta+\mco(1)\sigma^{\frac{3}{2}}\cdot\delta\xi_{*}+\mco(1)\sigma^{2}\\
= & \left(-\sigma\dot{K}\Theta(\overline{\xi}_{*})\delta\xi_{*}+\mco(1)\sigma\cdot\delta\xi_{*}^{2}\right)+\sigma\cdot\frac{\delta\xi_{*}}{L-\xi_{*}}\left(-\Theta(\overline{\xi}_{*})\delta\xi_{*}+\mco(1)\delta\xi_{*}^{2}\right)\\
 & +\mco(1)\sigma^{\frac{3}{2}}\cdot\delta\xi_{*}+\mco(1)\sigma^{2}\\
= & \left(-\sigma\dot{K}\Theta(\overline{\xi}_{*})+\mco(1)\sigma^{\frac{3}{2}}\right)\delta\xi_{*}+\mco(1)\sigma\cdot\delta\xi_{*}^{2}+\mco(1)\sigma^{2},
\end{align*}
that is,
\begin{equation}
\mci(\delta\xi_{*};\delta U,\delta\psi';\delta U_{-})=\left(-\sigma\dot{K}\Theta(\overline{\xi}_{*})+\mco(1)\sigma^{\frac{3}{2}}\right)\delta\xi_{*}+\mco(1)\sigma\cdot\delta\xi_{*}^{2}+\mco(1)\sigma^{2},\label{eq:iteration_mci_expansion}
\end{equation}
where $\mco(1)$ depends on $\overline{\xi}_{*}$, $\beta$, $\overline{U}_{+}$,
$\hat{C}_{L}$, $C_{L}$ , $C_{+}$, and $\norm{P'}_{L^{\infty}(\Gamma_{3})}$.

The expansion \eqref{eq:iteration_mci_expansion} of $\mci(\delta\xi_{*};\delta U,\delta\psi';\delta U_{-})$
indicates that
\begin{equation}
\frac{\partial\mci}{\partial(\delta\xi_{*})}(0;0,0;\dot{U}_{-})=-\sigma\dot{K}\Theta(\overline{\xi}_{*})+\mco(1)\sigma^{\frac{3}{2}},\label{eq:iteration_approx_functional_derivative}
\end{equation}
and \eqref{eq:iteration_approx_location_update_cond} holds as long
as $\Theta(\overline{\xi}_{*})\not=0$ and $\sigma$ is sufficiently small.
Then the implicit function theorem implies the existence of the solution
$\delta\xi_{*}$ to the equation \eqref{eq:linearized_solvability_cond}.
And the expansion \eqref{eq:iteration_mci_expansion} further yields
that the solution $\delta\xi_{*}$ satisfies the estimate \eqref{eq:iteration_approx_location_estimate}.
Thus, we complete the proof of the lemma.
\end{proof}
Lemma \ref{lem:iteration_approx_location}, together with Theorem
\ref{thm:linearized_problem}, shows that there exists a solution
$(\delta U^{*};\ (\delta\psi^{*})'|\ \delta\xi_{*})$ to the linearized
problem \eqref{eq:linearized_system_sub_elliptic_1}-\eqref{eq:linearized_bc_RH_2}
as $\left(\delta U-\dot{U}_{+};\delta\psi'-\dot{\psi}'\right)\in\fk(\frac{1}{2}\sigma^{3/2})$
with sufficiently small $\sigma$. Moreover, it could be shown that the solution $\left(\delta U^{*};(\delta\psi^{*})'\right)$
satisfies that $\left(\delta U^{*}-\dot{U}_{+};(\delta\psi^{*})'-\dot{\psi}'\right)\in\fk(\frac{1}{2}\sigma^{3/2})$
as long as $\sigma$ is sufficiently small. That is, the iteration
mapping
\[
\mcj_{s}:\left(\delta U;\delta\psi'\right)\mapsto\left(\delta U^{*};(\delta\psi^{*})'\right)
\]
is well-defined in 
\[
\fk_{\sigma}(\dot{U}_{+};\dot{\psi}')\defs\set{\left(\delta U;\delta\psi'\right):\ \left(\delta U-\dot{U}_{+};\delta\psi'-\dot{\psi}'\right)\in\fk(\frac{1}{2}\sigma^{3/2})}
\]
for any sufficiently small $\sigma$. Indeed, the following lemma holds.
\begin{lem}
\label{lem:iteration_mapping_well-defined}

There exists a positive constant $\sigma_{2}\ll1$, such that for
any $0<\sigma\leq\sigma_{2}$, the mapping $\mcj_{s}$ is well-defined
in $\fk_{\sigma}(\dot{U}_{+};\dot{\psi}')$.
\end{lem}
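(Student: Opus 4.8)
The plan is to show that $\mcj_s$ is a self-mapping of $\fk_\sigma(\dot U_+;\dot\psi')$: for an input in this set, Lemma \ref{lem:iteration_approx_location} and Theorem \ref{thm:linearized_problem} already guarantee that the output exists, so the only thing left to prove is the invariance $\left(\delta U^*-\dot U_+;(\delta\psi^*)'-\dot\psi'\right)\in\fk(\tfrac12\sigma^{3/2})$. First I would fix $(\delta U;\delta\psi')\in\fk_\sigma(\dot U_+;\dot\psi')$; by Lemma \ref{lem:iteration_approx_location} there is a unique $\delta\xi_*$ with $|\delta\xi_*|\le C_*\sigma$ solving \eqref{eq:linearized_solvability_cond}, and Theorem \ref{thm:linearized_problem} then produces $(\delta U^*;(\delta\psi^*)')=\mcp(\mcf;\mcg;\delta P_3;\delta\Theta_4)$. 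Since for the fixed shock location $\overline\xi_*$ the equations \eqref{eq:linearized_system_sub_elliptic_1}–\eqref{eq:linearized_system_sub_hyper_2} and the boundary operators in \eqref{eq:linearized_bc_nozzle_2}–\eqref{eq:linearized_bc_RH_2} are linear, with $\delta\xi_*$ entering only through the right-hand sides, and since $\left(\dot U_+;\dot\psi'\,|\,0\right)=\mcp_a(\dot{\mcf};\dot{\mcg};\dot P_e;\dot\Theta_N)$ with $\dot{\mcf}=(0,0,-\Phi(\overline U_+))$ by Remark \ref{rem:linearized_first_approx}, the difference $\left(\delta U^*-\dot U_+;(\delta\psi^*)'-\dot\psi'\right)$ solves the same linearized problem with data $\mcf-\dot{\mcf}$, $\mcg-\dot{\mcg}$, $\delta P_3-\dot P_e$, $\delta\Theta_4-\dot\Theta_N$. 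Hence the linear a priori estimates of Theorem \ref{thm:linearized_problem} reduce the lemma to bounding each of these data differences by $\mco(\sigma^2)$ in the norms that occur there.

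Next I would estimate the data differences by recycling the expansions already performed in the proof of Lemma \ref{lem:iteration_approx_location}, using $\norm{\delta U}_{(\Omega;\Gamma_{\mr s})}+\norm{\delta\psi'}_{W^{1-\frac1\beta,\beta}(\Gamma_{\mr s})}\le C_+\sigma$ and $|\delta\xi_*|\le C_*\sigma$. Each summand of $f_1$ and of $f_2$ is a product of a factor that is $\mco(\sigma)$ in $\mcc^0$ (a coefficient difference from the constant state $\overline U_+$, or $\delta\psi'$, or $\delta\xi_*$) and a factor that is $\mco(\sigma)$ in $L^\beta$ (a first $\xi$-derivative of $\delta U$), so $\norm{f_1}_{L^\beta(\Omega)}+\norm{f_2}_{L^\beta(\Omega)}\le C\sigma^2$. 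Since the linearization of $\Phi$ at $\overline U_+$ is $\delta U\mapsto\overline q_+\delta q+\tfrac{1}{\overline\rho_+}\delta p+\overline T_+\delta S$ (recall $\Phi=\tfrac12 q^2+i$ and $\dif i=\tfrac1\rho\dif p+T\dif S$), one has $f_3(\delta U)=-\Phi(\overline U_+)+\mco\!\left(\norm{\delta U}_{\mcc^0(\Omega)}^2\right)=\dot f_3+\mco(\sigma^2)$, hence $\norm{f_3(\xi,\eta)-f_3(\overline\xi_*,\eta)}_{\mcc^0(\Omega)}\le C\sigma^2$. The expansion of $g_j$ in Lemma \ref{lem:iteration_approx_location} gives $g_j=-\beta_j^-\cdot\dot U_-|_{(\overline\xi_*+\delta\xi_*,\eta)}+\mco(\sigma^2)$ for $j=1,2,3$ (and analogously for $g_4$), while $\dot g_j=-\beta_j^-\cdot\dot U_-|_{(\overline\xi_*,\eta)}$, so $g_j-\dot g_j=\mco(\sigma^2)$ because $\dot U_-|_{(\overline\xi_*+\delta\xi_*,\eta)}-\dot U_-|_{(\overline\xi_*,\eta)}=\mco(\sigma^2)$ by \eqref{eq:first_approx_super_estimate}; the bound passes to $g_j^\sharp-\dot g_j^\sharp=\left(B_s^{-1}(g-\dot g)\right)_j$. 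Finally $\delta P_3-\dot P_e=\mco(\sigma^2)$ as in \eqref{eq:iteration_mci_P}, and $\delta\Theta_4-\dot\Theta_N=\mco(\sigma^2)$ because the argument of $\Theta$ in $\delta\Theta_4$ is shifted from $\xi$ by $\mco(|\delta\xi_*|)=\mco(\sigma)$ while $\Theta\in\mcc^{2+\alpha}$.

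Putting these together with the estimates of Theorem \ref{thm:linearized_problem} applied to the difference problem gives $\norm{\delta U^*-\dot U_+}_{(\Omega;\Gamma_{\mr s})}+\norm{(\delta\psi^*)'-\dot\psi'}_{W^{1-\frac1\beta,\beta}(\Gamma_{\mr s})}\le C\sigma^2$, with $C$ depending only on $\overline U_-$, $\overline U_+$, $L$, $\overline\xi_*$, $\beta$ and $1/|\Theta(\overline\xi_*)|$; choosing $\sigma_2\le\min\{\sigma_1,(2C)^{-2}\}$ forces $C\sigma^2\le\tfrac12\sigma^{3/2}$, which is precisely the invariance $\left(\delta U^*-\dot U_+;(\delta\psi^*)'-\dot\psi'\right)\in\fk(\tfrac12\sigma^{3/2})$, so $\mcj_s$ is well defined on $\fk_\sigma(\dot U_+;\dot\psi')$ for $0<\sigma\le\sigma_2$. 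The main obstacle I anticipate is the consistent bookkeeping of norms: in Lemma \ref{lem:iteration_approx_location} only the integral $\int_\Omega f_2$ had to be controlled, whereas here the full $L^\beta(\Omega)$ norms of $f_1$ and $f_2$ are needed, which forces one to check that every single term carries an $\mco(\sigma)$ factor in $\mcc^0$ together with an $L^\beta$-bounded factor, and to track all constants so that they remain uniform as $\delta\xi_*$ ranges over $[-C_*\sigma,C_*\sigma]$ and $(\delta U;\delta\psi')$ over $\fk_\sigma(\dot U_+;\dot\psi')$.
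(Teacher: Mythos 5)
Your proposal is correct and follows essentially the same route as the paper's proof: reduce to the difference problem $\left(\delta U^{*}-\dot{U}_{+};(\delta\psi^{*})'-\dot{\psi}'\right)=\mcp\left(\mcf-\dot{\mcf};\mcg-\dot{\mcg};\delta P_{3}-\dot{P}_{e};\delta\Theta_{4}-\dot{\Theta}_{N}\right)$, show each data difference is $\mco(\sigma^{2})$ in the relevant norm (quadratic remainders for $f_{1},f_{2},f_{3}$ and the $g_{j}$, the $\mco(\sigma)\cdot\mco(\sigma)$ shift estimates for $\dot{U}_{-}$, $\delta P_{3}$ and $\delta\Theta_{4}$), and close with $C\sigma^{2}\leq\frac{1}{2}\sigma^{3/2}$ for $\sigma$ small. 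No gaps.
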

\begin{proof}
It suffices to verify that $\left(\delta U^{*};(\delta\psi^{*})'\right)\in\fk_{\sigma}(\dot{U}_{+};\dot{\psi}')$
when $\sigma$ is small.

Since
\begin{eqnarray*}
\left(\delta U^{*};(\delta\psi^{*})'|\ \delta\xi_{*}\right) & = & \mcp_{a}\left(\mcf;\mcg;\delta P_{3};\delta\Theta_{4}\right),\\
\left(\dot{U}_{+};\dot{\psi}'|\ 0\right) & = & \mcp_{a}\left(\dot{\mcf};\dot{\mcg};\dot{P}_{e};\dot{\Theta}_{N}\right),
\end{eqnarray*}
where $\mcf=\mcf(\delta U;\delta\psi',\delta\xi_{*})\defs(f_{1}(\delta U;\delta\psi',\delta\xi_{*}),\ f_{2}(\delta U;\delta\psi',\delta\xi_{*}),\ f_{3}(\delta U))$,
$\mcg=\mcg(\delta U,\delta U_{-};\delta\psi',\delta\xi_{*})\defs(g_{j}(\delta U,\delta U_{-};\delta\psi',\delta\xi_{*});\ j=1,2,3,4)$,
$\dot{\mcf}\defs\left(0,0,-\Phi(\overline{U}_{+})\right)$, and $\dot{\mcg}\defs\left(\dot{g}_{1},\dot{g}_{2},\dot{g}_{3},\dot{g}_{4}\right)$,
so it holds that
\begin{equation}
\left(\delta U^{*}-\dot{U}_{+};(\delta\psi^{*})'-\dot{\psi}'\right)=\mcp\left(\mcf-\dot{\mcf};\mcg-\dot{\mcg};\delta P_{3}-\dot{P}_{e};\delta\Theta_{4}-\dot{\Theta}_{N}\right).\label{eq:iteration_diff_approx_problem}
\end{equation}
Then, it follows from Theorem \ref{thm:linearized_problem} that
\begin{align}
 & \norm{\delta U^{*}-\dot{U}_{+}}_{(\Omega;\Gamma_{\mr{s}})}+\norm{(\delta\psi^{*})'-\dot{\psi}'}_{W^{1-\frac{1}{\beta},\beta}(\Gamma_{\mr{s}})}\label{eq:iteration_diff_approx_estimates}\\
\leq & C\left(\sum_{j=1}^{2}\norm{f_{j}}_{L^{\beta}(\Omega)}+\norm{f_{3}+\Phi(\overline{U}_{+})}_{\mcc^{0}(\Omega)}+\sum_{j=1}^{4}\norm{g_{j}-\dot{g}_{j}}_{W^{1-\frac{1}{\beta},\beta}(\Gamma_{\mr{s}})}\right)\nonumber \\
 & +C\left(\norm{\delta P_{3}-\dot{P}_{e}}_{W^{1-\frac{1}{\beta},\beta}(\Gamma_{3})}+\norm{\delta\Theta_{4}-\dot{\Theta}_{N}}_{W^{1-\frac{1}{\beta},\beta}(\Gamma_{4})}\right).\nonumber 
\end{align}
It remains to estimate all the terms on the right hand side above.

Since for any $\beta>2$, $W_{\beta}^{1}(\Omega)\subset L^{\infty}(\Omega)$,
and $W^{1-\frac{1}{\beta},\beta}(\Gamma_{j})\subset L^{\infty}(\Gamma_{j})$,
then 
\begin{align*}
 & \norm{f_{1}(\delta U;\delta\psi',\delta\xi_{*})}_{L^{\beta}(\Omega)}\\
\leq & \norm{\left(\overline{q}_{+}-q\cos\theta\right)\partial_{\xi}\delta\theta+\frac{\sin\delta\theta}{\rho q}\partial_{\xi}\delta p}_{L^{\beta}(\Omega)}+\norm{\frac{L-\xi}{L-\psi(\eta)}\cdot\delta\psi'(\eta)\partial_{\xi}\delta p}_{L^{\beta}(\Omega)}\\
 & +\norm{\frac{\delta\xi_{*}-\int_{\eta}^{1}\delta\psi'(\tau)\dif\tau}{L-\psi(\eta)}\left(\frac{\sin\delta\theta}{\rho q}\partial_{\xi}\delta p-q\cos\delta\theta\partial_{\xi}\delta\theta\right)}_{L^{\beta}(\Omega)}\\
\leq & C\norm{\delta U}_{L^{\infty}(\Omega)}\norm{(\delta\theta,\delta p)}_{W_{\beta}^{1}(\Omega)}+\norm{\delta\psi'}_{L^{\infty}(\Gamma_{\mr{s}})}\norm{\delta p}_{W_{\beta}^{1}(\Omega)}\\
 & +C(\overline{\xi}_{*})\abs{\delta\xi_{*}}\cdot\norm{(\delta\theta,\delta p)}_{W_{\beta}^{1}(\Omega)}+C(\overline{\xi}_{*})\norm{\delta\psi'}_{L^{\infty}(\Gamma_{\mr{s}})}\norm{(\delta\theta,\delta p)}_{W_{\beta}^{1}(\Omega)}\\
\leq & C\cdot\left(C_{+}\sigma\right)^{2}+\left(C_{+}\sigma\right)^{2}+C(\overline{\xi}_{*})\cdot C_{*}\sigma\cdot C_{+}\sigma+C(\overline{\xi}_{*})\cdot\left(C_{+}\sigma\right)^{2}\\
\leq & C_{1}\sigma^{2},
\end{align*}
where the constant $C_{1}$ depends on $\overline{\xi}_{*}$, $\overline{U}_{+}$,
and $C_{*}$.

Similarly, one has
\[
\norm{f_{2}(\delta U;\delta\psi',\delta\xi_{*})}_{L^{\beta}(\Omega)}\leq C_{2}\sigma^{2}.
\]

Next, since
\begin{align*}
 & f_{3}(\delta U)+\Phi(\overline{U}_{+})\\
= & \Phi(\overline{U}_{+})+(\overline{q}_{+}\delta q+\frac{1}{\overline{\rho}_{+}}\delta p+\overline{T}_{+}\delta S)-\Phi\left(U\right)\\
= & -\int_{0}^{1}D_{U}^{2}\Phi(\overline{U}_{+}+t\delta U)\dif t\cdot(\delta U)^{2},
\end{align*}
it holds that 
\[
\norm{f_{3}+\Phi(\overline{U}_{+})}_{\mcc^{0}(\Omega)}\leq C(C_{+}\sigma)^{2}\defs C_{3}\sigma^{2}.
\]

Moreover, in the proof of Lemma \ref{lem:iteration_approx_location},
it has been already shown that, for $j=1,2,3$, 
\[
g_{j}(\delta U,\delta U_{-};\delta\psi',\delta\xi_{*})-\dot{g}_{j}=-\beta_{j}^{-}\cdot\left(\dot{U}_{-}|_{\left(\overline{\xi}_{*}+\delta\xi_{*},\eta\right)}-\dot{U}_{-}|_{\left(\overline{\xi}_{*},\eta\right)}\right)+\mco(1)\sigma^{2},
\]
hence, 
\begin{align*}
 & \norm{g_{j}(\delta U,\delta U_{-};\delta\psi',\delta\xi_{*})-\dot{g}_{j}}_{W^{1-\frac{1}{\beta},\beta}(\Gamma_{\mr{s}})}\\
\leq & C\cdot\left(C_{*}\sigma\right)\norm{\partial_{\xi}\dot{U}_{-}}_{\mcc^{1,\alpha}(\Omega_{L})}+\mco(1)\sigma^{2}\\
\leq & C_{j}^{b}\sigma^{2}.
\end{align*}
Analogous computations show also that
\[
\norm{g_{4}(\delta U,\delta U_{-};\delta\psi',\delta\xi_{*})-\dot{g}_{4}}_{W^{1-\frac{1}{\beta},\beta}(\Gamma_{\mr{s}})}\leq C_{4}^{b}\sigma^{2}.
\]

Furthermore, the proof of Lemma \ref{lem:iteration_approx_location} gives that
\[
\delta P_{3}(\eta;\delta U)=\sigma P(\eta)+\mco(1)\sigma^{2}=\dot{P}_{e}+\mco(1)\sigma^{2}.
\]
Thus,
\[
\norm{\delta P_{3}-\dot{P}_{e}}_{W^{1-\frac{1}{\beta},\beta}(\Gamma_{3})}\leq\mco(1)\sigma^{2}.
\]

Finally, since
\begin{align*}
\delta\Theta_{4}-\dot{\Theta}_{N}= & \sigma\Theta(\xi+\frac{L-\xi}{L-\overline{\xi}_{*}}\cdot\delta\xi_{*})-\sigma\Theta(\xi)\\
= & \sigma\int_{0}^{1}\Theta'(\xi+s\frac{L-\xi}{L-\overline{\xi}_{*}}\cdot\delta\xi_{*})\dif s\cdot\frac{L-\xi}{L-\overline{\xi}_{*}}\cdot\delta\xi_{*},
\end{align*}
it holds that 
\[
\norm{\delta\Theta_{4}-\dot{\Theta}_{N}}_{W^{1-\frac{1}{\beta},\beta}(\Gamma_{4})}\leq\sigma\cdot\norm{\Theta'}_{\mcc^{1,\alpha}(\Gamma_{4})}\cdot(C_{*}\sigma)\leq C_{*}\sigma^{2}.
\]

Collecting the above computations and estimates, we conclude that there
exists a constant $C_{\mcj}$, such that for sufficiently small $\sigma$,
\begin{equation}
\norm{\delta U^{*}-\dot{U}_{+}}_{(\Omega;\Gamma_{\mr{s}})}+\norm{(\delta\psi^{*})'-\dot{\psi}'}_{W^{1-\frac{1}{\beta},\beta}(\Gamma_{\mr{s}})}\leq C_{\mcj}\sigma^{2}\leq\frac{1}{2}\sigma^{\frac{3}{2}}.\label{eq:iteration_well-posedness_estimate}
\end{equation}
That is, $\left(\delta U^{*};(\delta\psi^{*})'\right)\in\fk_{\sigma}(\dot{U}_{+};\dot{\psi}')$
and the proof is complete.
\end{proof}
Then, the\textcolor{black}{{} Theorem \ref{thm:main_thm_Lagrange} will
be proved as long as the mapping }$\mcj_{s}$ is
contractive, which is the conclusion of the following lemma.
\begin{lem}
\label{lem:iteration_mapping_contraction}

There exists a positive constant $\sigma_{3}\ll1$, such that for
any $0<\sigma\leq\sigma_{3}$, the mapping $\mcj_{s}$ is contractive
in $\fk_{\sigma}(\dot{U}_{+};\dot{\psi}')$.
\end{lem}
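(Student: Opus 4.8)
The plan is to establish the Banach contraction estimate for $\mcj_s$ on $\fk_\sigma(\dot{U}_{+};\dot{\psi}')$, regarded as a complete metric space with the distance induced by $\norm{\delta U}_{(\Omega;\Gamma_{\mr{s}})}+\norm{\delta\psi'}_{W^{1-\frac{1}{\beta},\beta}(\Gamma_{\mr{s}})}$; combined with Lemma \ref{lem:iteration_mapping_well-defined} this completes the fixed-point construction and thereby Theorem \ref{thm:main_thm_Lagrange}. Fix $(\delta U_i;\delta\psi_i')\in\fk_\sigma(\dot{U}_{+};\dot{\psi}')$, $i=1,2$, let $\delta\xi_{*,i}$ be the corresponding solvability parameters from Lemma \ref{lem:iteration_approx_location}, and set $(\delta U_i^{*};(\delta\psi_i^{*})')=\mcj_s(\delta U_i;\delta\psi_i')$. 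Since the operator $\mcp$ of Definition \ref{def:linearized_problem_operator} is linear in its data $(\mcf;\mcg;\delta P_3;\delta\Theta_4)$ and is posed on the fixed domain $\Omega$,
\[
\left(\delta U_1^{*}-\delta U_2^{*};\,(\delta\psi_1^{*})'-(\delta\psi_2^{*})'\right)=\mcp\left(\mcf_1-\mcf_2;\ \mcg_1-\mcg_2;\ \delta P_{3,1}-\delta P_{3,2};\ \delta\Theta_{4,1}-\delta\Theta_{4,2}\right),
\]
where the subscript $i$ indicates evaluation at $(\delta U_i,\delta\psi_i',\delta\xi_{*,i})$. By the estimates of Theorem \ref{thm:linearized_problem} it then suffices to bound each of the four data differences by $C\sigma$ times $\norm{\delta U_1-\delta U_2}_{(\Omega;\Gamma_{\mr{s}})}+\norm{\delta\psi_1'-\delta\psi_2'}_{W^{1-\frac{1}{\beta},\beta}(\Gamma_{\mr{s}})}$.

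The first step is a quantitative Lipschitz bound for the solvability parameter: $\abs{\delta\xi_{*,1}-\delta\xi_{*,2}}\le C_{*}\bigl(\norm{\delta U_1-\delta U_2}_{(\Omega;\Gamma_{\mr{s}})}+\norm{\delta\psi_1'-\delta\psi_2'}_{W^{1-\frac{1}{\beta},\beta}(\Gamma_{\mr{s}})}\bigr)$. This follows from the structure of the functional $\mci$ already displayed in the proof of Lemma \ref{lem:iteration_approx_location}: the expansion \eqref{eq:iteration_mci_expansion} gives $\partial_{\delta\xi_*}\mci=-\sigma\dot{K}\Theta(\overline{\xi}_{*})+\mco(1)\sigma^{3/2}$, bounded below in modulus by $c\sigma$ once $\Theta(\overline{\xi}_{*})\neq0$ and $\sigma$ is small, whereas every term of $\mci$ depending on $(\delta U,\delta\psi')$ is a quadratic remainder — a Taylor remainder of $G_j$, $\Phi$ or $P$, or a product of an $\mco(\sigma)$ coefficient with $\partial_\xi\delta U$, $\delta\psi'$ or $\delta\xi_*$ — hence has $(\delta U,\delta\psi')$–differential of size $\mco(\sigma)$. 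The implicit function theorem then yields the bound with $C_*$ depending on $\overline{\xi}_{*}$ and $1/\abs{\Theta(\overline{\xi}_{*})}$ but not on $\sigma$. The point to watch is precisely that the lower bound on $\partial_{\delta\xi_*}\mci$ degenerates like $\sigma$, so boundedness of the Lipschitz constant relies on the order-$\sigma^2$ structure of $\mci$; the delicate $\partial_\xi\dot{p}_{+}$–terms in $f_2$ are treated exactly as in Lemma \ref{lem:iteration_approx_location}.

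The second step estimates the four data differences. By construction each component of $\mcf$, $\mcg$, $\delta P_3$, $\delta\Theta_4$ is either a Taylor remainder of a smooth nonlinearity near $\left(\overline{U}_{+},\overline{U}_{-};\overline{\psi}'\right)$ (the $\overline{q}_{+}\delta q+\frac{1}{\overline{\rho}_{+}}\delta p+\overline{T}_{+}\delta S-\Phi(U)$ part of $f_3$ and the quadratic parts of $g_1,\dots,g_4$), or a product of an $\mco(\sigma)$ coefficient with one first-order derivative of $\delta U$ or with $\delta\psi'$, $\delta\xi_*$ (the rest of $f_1$, $f_2$), together with terms produced by evaluating the fixed supersonic flow $\dot{U}_-$ at the two shifted shock positions, which are controlled by $\norm{\partial_\xi\dot{U}_-}_{\mcc^{0}(\Omega)}=\mco(\sigma)$ (Lemma \ref{lem:first_approx_super}). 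Differentiating these structures in $(\delta U,\delta\psi',\delta\xi_*)$, using the a priori bounds $\norm{\delta U_i}_{(\Omega;\Gamma_{\mr{s}})}+\norm{\delta\psi_i'}_{W^{1-\frac{1}{\beta},\beta}(\Gamma_{\mr{s}})}\le C_+\sigma$, $\abs{\delta\xi_{*,i}}\le C_*\sigma$ (Theorem \ref{thm:first_approx_existence}, Lemma \ref{lem:iteration_approx_location}), and Step 1 to absorb $\delta\xi_{*,1}-\delta\xi_{*,2}$, yields
\[
\sum_{j=1}^{2}\norm{f_{j,1}-f_{j,2}}_{L^{\beta}(\Omega)}+\norm{f_{3,1}-f_{3,2}}_{\mcc^{0}(\Omega)}+\sum_{j=1}^{4}\norm{g_{j,1}-g_{j,2}}_{W^{1-\frac{1}{\beta},\beta}(\Gamma_{\mr{s}})}+\norm{\delta P_{3,1}-\delta P_{3,2}}_{W^{1-\frac{1}{\beta},\beta}(\Gamma_{3})}+\norm{\delta\Theta_{4,1}-\delta\Theta_{4,2}}_{W^{1-\frac{1}{\beta},\beta}(\Gamma_{4})}\le C\sigma\Bigl(\norm{\delta U_1-\delta U_2}_{(\Omega;\Gamma_{\mr{s}})}+\norm{\delta\psi_1'-\delta\psi_2'}_{W^{1-\frac{1}{\beta},\beta}(\Gamma_{\mr{s}})}\Bigr).
\]
Note that no loss of derivatives occurs: $f_1,f_2$ carry exactly one derivative of $\delta U$, matched by the $W_\beta^1$ regularity of the output in Theorem \ref{thm:linearized_problem}, and always with an $\mco(\sigma)$ coefficient.

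Feeding these bounds into the representation formula together with the estimates of Theorem \ref{thm:linearized_problem} gives
\[
\norm{\delta U_1^{*}-\delta U_2^{*}}_{(\Omega;\Gamma_{\mr{s}})}+\norm{(\delta\psi_1^{*})'-(\delta\psi_2^{*})'}_{W^{1-\frac{1}{\beta},\beta}(\Gamma_{\mr{s}})}\le C_c\,\sigma\Bigl(\norm{\delta U_1-\delta U_2}_{(\Omega;\Gamma_{\mr{s}})}+\norm{\delta\psi_1'-\delta\psi_2'}_{W^{1-\frac{1}{\beta},\beta}(\Gamma_{\mr{s}})}\Bigr),
\]
so with $\sigma_3\le\min\{\sigma_1,\sigma_2,1/(2C_c)\}$ the mapping $\mcj_s$ is a contraction with factor $\le1/2$ on $\fk_\sigma(\dot{U}_{+};\dot{\psi}')$ for every $0<\sigma\le\sigma_3$. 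The main obstacle I anticipate is Step 1: the solvability parameter $\delta\xi_*$ is only defined implicitly and the linearization governing it is $\mco(\sigma)$–degenerate, so one must propagate the quadratic (order-$\sigma^2$) structure of $\mci$ all the way through in order to keep both its well-definedness and the constant $C_*$ uniform in $\sigma$; granting that, the remainder is a routine small-coefficient estimate closed by Theorem \ref{thm:linearized_problem}.
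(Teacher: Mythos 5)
Your proposal is correct and follows essentially the same route as the paper's proof: linearity of the solution operator $\mcp$ reduces the contraction estimate to bounding the differences of the data $(\mcf;\mcg;\delta P_{3};\delta\Theta_{4})$, the Lipschitz bound $\abs{\delta\xi_{2*}-\delta\xi_{1*}}\leq\mco(1)\bigl(\norm{\delta U_{2}-\delta U_{1}}_{(\Omega;\Gamma_{\mr{s}})}+\norm{\delta\psi_{2}'-\delta\psi_{1}'}_{W^{1-\frac{1}{\beta},\beta}(\Gamma_{\mr{s}})}\bigr)$ is obtained exactly as you describe by exploiting the expansion of $\mci$ (derivative in $\delta\xi_{*}$ of size $\sigma\dot{K}\abs{\Theta(\overline{\xi}_{*})}$ against a $(\delta U,\delta\psi')$-differential of size $\mco(\sigma)$), and the remaining data differences are small-coefficient estimates closed by Theorem \ref{thm:linearized_problem}. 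Your identification of the $\sigma$-degeneracy in $\partial_{\delta\xi_{*}}\mci$ as the delicate point matches the paper's treatment.
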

\begin{proof}
Assume that $\left(\delta U_{k};\delta\psi_{k}'\right)\in\fk_{\sigma}(\dot{U}_{+};\dot{\psi}')$,
$k=1,2$, then, by Lemma \ref{lem:iteration_approx_location} and
Lemma \ref{lem:iteration_mapping_well-defined}, there exist $\delta\xi_{k*}$
satisfying the estimate \eqref{eq:iteration_approx_location_estimate}
and $\left(\delta U_{k}^{*};\delta\psi_{k}^{*'}\right)\in\fk_{\sigma}(\dot{U}_{+};\dot{\psi}')$,
such that 
\begin{eqnarray}
\left(\delta U_{k}^{*};\delta\psi_{k}^{*'}|\ \delta\xi_{k*}\right) & = & \mcp_{a}\left(\mcf_{k};\mcg_{k};\delta P_{3}(\eta;\delta U_{k});\delta\Theta_{4}(\xi;\delta\xi_{k*})\right),\qquad k=1,2,\label{eq:iteration_contractive_problems}
\end{eqnarray}
where $\mcf_{k}=\mcf(\delta U_{k};\delta\psi_{k}',\delta\xi_{k*})\defs(f_{1}(\delta U_{k};\delta\psi_{k}',\delta\xi_{k*}),\ f_{2}(\delta U_{k};\delta\psi_{k}',\delta\xi_{k*}),\ f_{3}(\delta U_{k}))$,
and $\mcg_{k}=\mcg(\delta U_{k},\delta U_{-};\delta\psi_{k}',\delta\xi_{k*})\defs(g_{j}(\delta U_{k},\delta U_{-};\delta\psi_{k}',\delta\xi_{k*});\ j=1,2,3,4)$.
Then, to show that the mapping $\mcj_{s}$ is contractive, it suffices
to verify that, for sufficiently small $\sigma>0$,
\begin{equation}
\begin{split}
&\norm{\delta U_{2}^{*}-\delta U_{1}^{*}}_{(\Omega;\Gamma_{\mr{s}})}+\norm{\delta\psi_{2}^{*'}-\delta\psi_{1}^{*'}}_{W^{1-\frac{1}{\beta},\beta}(\Gamma_{\mr{s}})} \\
\leq&\frac{1}{2}\left(\norm{\delta U_{2}-\delta U_{1}}_{(\Omega;\Gamma_{\mr{s}})}+\norm{\delta\psi_{2}'-\delta\psi_{1}'}_{W^{1-\frac{1}{\beta},\beta}(\Gamma_{\mr{s}})}\right).\label{eq:iteration_contraction_estimate_toprove}
\end{split}
\end{equation}
It follows from \eqref{eq:iteration_contractive_problems} that
\begin{align}
 & \left(\delta U_{2}^{*}-\delta U_{1}^{*};\delta\psi_{2}^{*'}-\delta\psi_{1}^{*'}\right)\label{eq:iteration_contractive_difference_problem}\\
= & \mcp\left(\mcf_{2}-\mcf_{1};\mcg_{2}-\mcg_{1};\delta P_{3}(\eta;\delta U_{2})-\delta P_{3}(\eta;\delta U_{1});\delta\Theta_{4}(\xi;\delta\xi_{2*})-\delta\Theta_{4}(\xi;\delta\xi_{1*})\right).\nonumber
\end{align}
Then one needs to estimate the terms in the right hand side. Since the right hand side of \eqref{eq:iteration_contractive_difference_problem}
involves $\delta\xi_{k*}$, which is determined by \eqref{eq:linearized_solvability_cond}
with given $\left(\delta U_{k};\delta\psi_{k}'\right)\in\fk_{\sigma}(\dot{U}_{+};\dot{\psi}')$,
one has to estimate $\abs{\delta\xi_{2*}-\delta\xi_{1*}}$ first.

It follows from \eqref{eq:linearized_solvability_cond} that
\begin{align*}
0= & \mci(\delta\xi_{2*};\delta U_{2},\delta\psi_{2}';\delta U_{-})-\mci(\delta\xi_{1*};\delta U_{1},\delta\psi_{1}';\delta U_{-})\\
= & \mci(\delta\xi_{2*};\delta U_{2},\delta\psi_{2}';\delta U_{-})-\mci(\delta\xi_{1*};\delta U_{2},\delta\psi_{2}';\delta U_{-})\\
 & +\mci(\delta\xi_{1*};\delta U_{2},\delta\psi_{2}';\delta U_{-})-\mci(\delta\xi_{1*};\delta U_{1},\delta\psi_{1}';\delta U_{-})\\
= & \int_{0}^{1}\frac{\partial\mci}{\partial(\delta\xi_{*})}(\delta\xi_{t*};\delta U_{2},\delta\psi_{2}';\delta U_{-})\dif t\cdot(\delta\xi_{2*}-\delta\xi_{1*})\\
 & +\int_{0}^{1}\nabla_{(\delta U,\delta\psi')}\mci(\delta\xi_{1*};\delta U_{t},\delta\psi_{t}';\delta U_{-})\dif t\cdot\left(\delta U_{2}-\delta U_{1};\delta\psi_{2}'-\delta\psi_{1}'\right),
\end{align*}
where $\delta\xi_{t*}\defs t\delta\xi_{2*}+(1-t)\delta\xi_{1*}$,
$\delta U_{t}\defs t\delta U_{2}+(1-t)\delta U_{1}$, and $\delta\psi_{t}'\defs t\delta\psi_{2}'+(1-t)\psi_{1}'$.
Then similar computations as in Lemma \ref{lem:iteration_approx_location} lead to
\begin{align*}
 & \frac{\partial\mci}{\partial(\delta\xi_{*})}(\delta\xi_{t*};\delta U_{2},\delta\psi_{2}';\delta U_{-})\\
= & \frac{\partial\mci}{\partial(\delta\xi_{*})}(0;0,0;\dot{U}_{-})+\int_{0}^{1}\nabla_{(\delta\xi_{*};\delta U,\delta\psi';\delta U_{-})}\frac{\partial\mci}{\partial(\delta\xi_{*})}(s\delta\xi_{t*};s\delta U_{2},s\delta\psi_{2}';\delta U_{-}^{s})\dif s\\
 & \qquad\qquad\qquad\qquad\qquad\qquad\qquad\cdot(\delta\xi_{t*};\delta U_{2},\delta\psi_{2}';\delta U_{-}-\dot{U}_{-})\\
= & \left(-\sigma\dot{K}\Theta(\overline{\xi}_{*})+\mco(1)\sigma^{\frac{3}{2}}\right)+\mco(1)\sigma^{2},
\end{align*}
and that
\[
\nabla_{(\delta U,\delta\psi')}\mci(\delta\xi_{1*};\delta U_{t},\delta\psi_{t}';\delta U_{-})=\mco(1)\sigma,
\]
where $\mco(1)$ is a constant depending on the parameters of the
background shock solution.

Therefore, as $\sigma$ is sufficiently small depending on $\dot{K}\Theta(\overline{\xi}_{*})$
and other parameters of the background solution, it holds that
\begin{equation}
\abs{\delta\xi_{2*}-\delta\xi_{1*}}\leq\mco(1)(\norm{\delta U_{2}-\delta U_{1}}_{(\Omega;\Gamma_{\mr{s}})}+\norm{\delta\psi_{2}'-\delta\psi_{1}'}_{W^{1-\frac{1}{\beta},\beta}(\Gamma_{\mr{s}})}),\label{eq:iteration_approx_location_diff}
\end{equation}
where $\mco(1)$ is a constant depending on $\dot{K}\Theta(\overline{\xi}_{*})$
and other parameters of the background solution.

Then, applying Theorem \ref{thm:linearized_problem} to the problem
\eqref{eq:iteration_contractive_difference_problem}, and by analogous
computations as in Lemma \ref{lem:iteration_mapping_well-defined},
with help of the estimate \eqref{eq:iteration_approx_location_diff},
we can show that the inequality \eqref{eq:iteration_contraction_estimate_toprove}
holds for sufficiently small $\sigma$, which shows the validity
of the lemma.
\end{proof}
\newpage
\appendix

\section{A Boundary Value Problem for Linear Elliptic Systems of First Order
with Constant Coefficients}

In this section, we are going to establish the well-posedness theory for
boundary value problems of elliptic systems of first order with a
particular form. It should be remarked that the notations used in this section
are independent and have no relations to the ones in other parts of
the paper.

\begin{figure}[th]
\centering
\def\svgwidth{280pt}
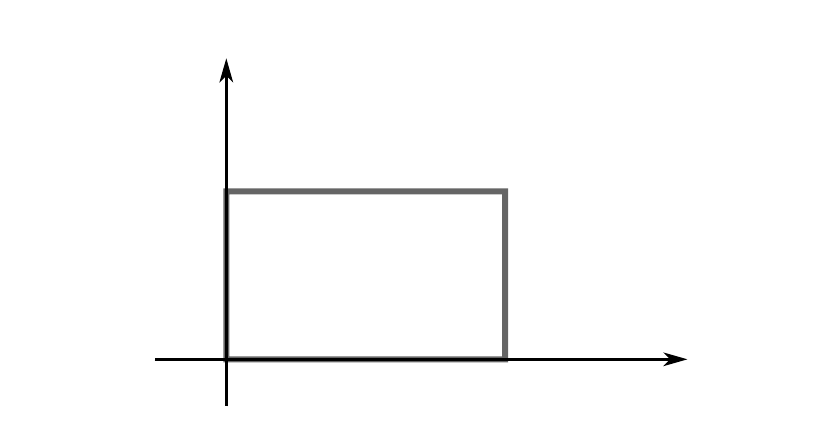

\caption{The domain of the boundary value problem.\label{fig:domain}}
\end{figure}

Let $\ell_{1}$, $\ell_{2}$ be two positive constants, and 
\[
\Omega:=\left\{ x=\left(x_{1},x_{2}\right)\in\Real^{2}|\ 0<x_{1}<\ell_{1},\ 0<x_{2}<\ell_{2}\right\} 
\]
 be a rectangle with the boundaries
\begin{eqnarray*}
\Gamma_{1} & = & \left\{ x_{1}=0,\ 0<x_{2}<\ell_{2}\right\} ,\\
\Gamma_{2} & = & \left\{ 0<x_{1}<\ell_{1},\ x_{2}=0\right\} ,\\
\Gamma_{3} & = & \left\{ x_{1}=\ell_{1},\ 0<x_{2}<\ell_{2}\right\} ,\\
\Gamma_{4} & = & \left\{ 0<x_{1}<\ell_{1},\ x_{2}=\ell_{2}\right\} .
\end{eqnarray*}

\subsection{Cauchy-Riemann equations}

Consider the following boundary value problem: 
\begin{align}
 & \partial_{x_{2}}v_{1}+\partial_{x_{1}}v_{2}=f_{1}\left(x\right), & \text{ in } & \Omega\label{eq:cauchy_riemann_eq_1}\\
 & \partial_{x_{2}}v_{2}-\partial_{x_{1}}v_{1}=f_{2}\left(x\right), & \text{ in } & \Omega\label{eq:cauchy_riemann_eq_2}\\
 & v_{1}=g_{1}\left(x_{2}\right), & \text{ on } & \Gamma_{1}\label{eq:cauchy_riemann_bc_1}\\
 & v_{2}=g_{2}\left(x_{1}\right), & \text{ on } & \Gamma_{2}\label{eq:cauchy_riemann_bc_2}\\
 & v_{1}=g_{3}\left(x_{2}\right), & \text{ on } & \Gamma_{3}\label{eq:cauchy_riemann_bc_3}\\
 & v_{2}=g_{4}\left(x_{1}\right). & \text{ on } & \Gamma_{4}\label{eq:cauchy_riemann_bc_4}
\end{align}

\begin{lem}
\label{lem:cauchy_riemann}

Let $q>2$. Assume that $f_{j}\in L^{q}\left(\Omega\right)\left(j=1,2\right)$,
and $g_{j}\in W_{q}^{1-1/q}\left(\Gamma_{j}\right)$, $\left(j=1,2,3,4\right)$.
Then there exists a unique solution $v\defs\left(v_{1},v_{2}\right)^{\top}\in\left(W_{q}^{1}\left(\Omega\right)\right)^{2}$
to the boundary value problem \eqref{eq:cauchy_riemann_eq_1}—\eqref{eq:cauchy_riemann_bc_4},
provided that
\begin{equation}
\int_{\Omega}f_{2}\left(x\right)\dif x=\int_{0}^{\ell_{1}}\left(g_{4}\left(x_{1}\right)-g_{2}\left(x_{1}\right)\right)\dif x_{1}+\int_{0}^{\ell_{2}}\left(g_{1}\left(x_{2}\right)-g_{3}\left(x_{2}\right)\right)\dif x_{2}.\label{eq:cauchy_riemann_exist_cond}
\end{equation}
Moreover, if \eqref{eq:cauchy_riemann_exist_cond} holds, then
the following estimate is valid:
\begin{equation}
\sum_{j=1}^{2}\norm{v_{j}}_{W_{q}^{1}\left(\Omega\right)}\leq C\left(\sum_{j=1}^{2}\norm{f_{j}}_{L^{q}\left(\Omega\right)}+\sum_{j=1}^{4}\norm{g_{j}}_{W_{q}^{1-1/q}\left(\Gamma_{j}\right)}\right),\label{eq:cauchy_riemann_estimate_v}
\end{equation}
where the constant $C$ depends only on $ q $ and $ \Omega $, but is independent of $v_{j}$. 
\end{lem}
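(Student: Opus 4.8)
The plan is to write $v=v^{(p)}+v^{(h)}$, where $v^{(p)}$ absorbs the right-hand sides $f_1,f_2$ and $v^{(h)}$ solves the homogeneous system (which is exactly the Cauchy--Riemann system for $w\defs v_2-iv_1$) carrying the boundary data $g_1,\dots,g_4$, and to solve the homogeneous problem by introducing a single scalar \emph{harmonic potential} $U$ whose Dirichlet values on the four sides are the antiderivatives of $g_1,\dots,g_4$. The point of this reformulation is that the compatibility condition \eqref{eq:cauchy_riemann_exist_cond} is precisely what makes those four boundary functions match continuously at the corners of $\Omega$.

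First, for the particular solution, I would let $\Phi,\Xi\in W_q^2(\Omega)$ solve $\Delta\Phi=f_1$ and $\Delta\Xi=f_2$ in $\Omega$ with zero Dirichlet data; since $\Omega$ is a rectangle -- a convex polygon all of whose angles equal $\pi/2$ -- the standard $L^q$ theory for the Dirichlet problem gives $\norm{\Phi}_{W_q^2(\Omega)}+\norm{\Xi}_{W_q^2(\Omega)}\le C(q,\Omega)\bigl(\norm{f_1}_{L^q(\Omega)}+\norm{f_2}_{L^q(\Omega)}\bigr)$. Setting
\[
v_1^{(p)}\defs\partial_{x_2}\Phi-\partial_{x_1}\Xi,\qquad v_2^{(p)}\defs\partial_{x_1}\Phi+\partial_{x_2}\Xi,
\]
a one-line computation gives $\partial_{x_2}v_1^{(p)}+\partial_{x_1}v_2^{(p)}=\Delta\Phi=f_1$ and $\partial_{x_2}v_2^{(p)}-\partial_{x_1}v_1^{(p)}=\Delta\Xi=f_2$, so $v^{(p)}\in(W_q^1(\Omega))^2$ with $\norm{v^{(p)}}_{W_q^1(\Omega)}\le C(q,\Omega)(\norm{f_1}_{L^q}+\norm{f_2}_{L^q})$. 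Then $v^{(h)}\defs v-v^{(p)}$ must solve \eqref{eq:cauchy_riemann_eq_1}--\eqref{eq:cauchy_riemann_eq_2} with $f_1=f_2=0$ and boundary data $\hat g_j\defs g_j-v^{(p)}|_{\Gamma_j}$ (first component on $\Gamma_1,\Gamma_3$, second on $\Gamma_2,\Gamma_4$), which again lie in $W_q^{1-1/q}(\Gamma_j)$ by the trace theorem. Integrating $f_2=\partial_{x_2}v_2^{(p)}-\partial_{x_1}v_1^{(p)}$ over $\Omega$ and subtracting from \eqref{eq:cauchy_riemann_exist_cond} shows that the $\hat g_j$ satisfy the homogeneous compatibility relation $\int_0^{\ell_1}(\hat g_4-\hat g_2)\,\dif x_1+\int_0^{\ell_2}(\hat g_1-\hat g_3)\,\dif x_2=0$.

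Next, renaming $\hat g_j$ as $g_j$, I would seek $v^{(h)}$ of the special form $v^{(h)}=(\partial_{x_2}U,\partial_{x_1}U)$. Since $\partial_{x_2}(\partial_{x_2}U)+\partial_{x_1}(\partial_{x_1}U)=\Delta U$ and $\partial_{x_2}(\partial_{x_1}U)-\partial_{x_1}(\partial_{x_2}U)\equiv0$, it suffices to take $U$ harmonic. On the vertical side $\Gamma_1$ the component $v_1=\partial_{x_2}U$ is the tangential derivative of $U|_{\Gamma_1}$, so $v_1=g_1$ on $\Gamma_1$ forces $U(0,x_2)=\int_0^{x_2}g_1+c_1$; similarly $U=\int_0^{x_1}g_2+c_2$ on $\Gamma_2$, $U=\int_0^{x_2}g_3+c_3$ on $\Gamma_3$, $U=\int_0^{x_1}g_4+c_4$ on $\Gamma_4$, with free constants $c_1,\dots,c_4$. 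Imposing continuity at the corners $(0,0)$, $(\ell_1,0)$, $(0,\ell_2)$ forces $c_2=c_1$, $c_3=c_1+\int_0^{\ell_1}g_2$, $c_4=c_1+\int_0^{\ell_2}g_1$, and continuity at $(\ell_1,\ell_2)$ then reads $\int_0^{\ell_2}g_3+c_3=\int_0^{\ell_1}g_4+c_4$ -- which, after substitution, is exactly the homogeneous compatibility relation. Thus, with $c_1\defs0$, the boundary function $U|_{\partial\Omega}$ is continuous and lies in $W_q^{2-1/q}(\Gamma_j)$ on each side, so the Dirichlet problem $\Delta U=0$ in $\Omega$ with these data has a unique solution $U\in W_q^2(\Omega)$ with $\norm{U}_{W_q^2(\Omega)}\le C(q,\Omega)\sum_{j=1}^4\norm{g_j}_{W_q^{1-1/q}(\Gamma_j)}$; setting $v^{(h)}=(\partial_{x_2}U,\partial_{x_1}U)\in(W_q^1(\Omega))^2$ and checking tangential derivatives side by side verifies \eqref{eq:cauchy_riemann_bc_1}--\eqref{eq:cauchy_riemann_bc_4}. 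Adding $v=v^{(p)}+v^{(h)}$ and combining the estimates yields \eqref{eq:cauchy_riemann_estimate_v}. For uniqueness, if all data vanish then $w=v_2-iv_1$ is holomorphic and, since $q>2$, $w\in W_q^1(\Omega)\hookrightarrow C^{0,1-2/q}(\overline\Omega)$; a primitive of $w$ has real part harmonic and constant on each side (its tangential derivatives vanish), hence -- by continuity at the corners -- globally constant, so $w\equiv0$ and $v\equiv0$.

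The only genuinely delicate point is the $W_q^2$-regularity of the scalar Dirichlet problems for $q>2$: on a domain with corners, Dirichlet data of optimal trace regularity need not produce a $W_q^2$ solution. Here, however, every corner is a right angle and every boundary condition is of Dirichlet type, for which the corner singularity exponents are $2,4,\dots$ (the leading singular functions being the smooth harmonic polynomials $2x_1x_2,\dots$), so no actual corner singularity appears and $W_q^2$-regularity with a constant depending only on $q$ and $\Omega$ holds for all $1<q<\infty$; this is the standard polygonal elliptic regularity theory, for which I would cite \cite{Grisvard1985}. Everything else -- the identity matching the antiderivatives at the corners and the propagation of the compatibility condition through the reduction -- is routine bookkeeping.
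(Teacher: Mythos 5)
Your proof is correct, but it organizes the reduction to scalar elliptic problems differently from the paper, and the comparison is instructive. The paper splits by source terms: one pair $\left(V_{1},V_{2}\right)$ carries $f_{1}$ with zero boundary data and is written as $\nabla\Phi=\left(V_{2},V_{1}\right)$, giving a zero-Dirichlet problem $\triangle\Phi=f_{1}$; the other pair $\left(W_{1},W_{2}\right)$ carries $f_{2}$ together with all four $g_{j}$ and is written as $\nabla\Psi=\left(-W_{1},W_{2}\right)$, giving a \emph{Neumann} problem $\triangle\Psi=f_{2}$ with $\partial_{\nu}\Psi$ prescribed, so that \eqref{eq:cauchy_riemann_exist_cond} appears as the classical Neumann compatibility condition $\int_{\Omega}f_{2}=\int_{\partial\Omega}\partial_{\nu}\Psi$, and the estimate is obtained via energy estimates plus the Poincar\'e inequality. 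You instead absorb both $f_{1}$ and $f_{2}$ into a particular solution built from two zero-Dirichlet potentials, and solve the remaining homogeneous Cauchy--Riemann system by a single harmonic potential $U$ with $\left(\partial_{x_{1}}U,\partial_{x_{2}}U\right)=\left(v_{2}^{(h)},v_{1}^{(h)}\right)$, whose Dirichlet data on each side are antiderivatives of the $g_{j}$; the solvability condition then becomes exactly the single-valuedness of $U$ around the four corners, which is a more transparent geometric reading of \eqref{eq:cauchy_riemann_exist_cond}, and you also supply an explicit uniqueness argument (holomorphy of $v_{2}-\mi v_{1}$ and constancy of the primitive's real part) that the paper leaves implicit. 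The price is that you must invoke the \emph{inhomogeneous} Dirichlet theory on a polygon: you need a $W_{q}^{2}$ solution with piecewise $W_{q}^{2-1/q}$ data that is merely continuous at the corners, i.e.\ the characterization of the Dirichlet trace space of $W_{q}^{2}(\Omega)$ on a right-angled polygon for $q>2$ (at a right angle the tangential directions of adjacent sides are orthogonal, so continuity is indeed the only corner constraint, and the Dirichlet singular exponents $2,4,\dots$ all exceed $2-2/q$); this is available in \cite{Grisvard1985} but is a slightly heavier citation than the zero-data Dirichlet and Neumann results the paper uses. Both arguments are sound and yield the same estimate \eqref{eq:cauchy_riemann_estimate_v}.
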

\begin{proof}
We decompose the problem \eqref{eq:cauchy_riemann_eq_1}—\eqref{eq:cauchy_riemann_bc_4}
into two boundary value problems with different inhomogeneous terms
as follows. Let $\left(v_{1},v_{2}\right)^{\top}=\left(V_{1},V_{2}\right)^{\top}+\left(W_{1},W_{2}\right)^{\top}$,
where $\left(V_{1},V_{2}\right)^{\top}$ is the solution to the problem
\begin{align}
 & \partial_{x_{2}}V_{1}+\partial_{x_{1}}V_{2}=f_{1}\left(x\right), & \text{ in } & \Omega\label{eq:cauchy_riemann_aug_V_eq_1}\\
 & \partial_{x_{2}}V_{2}-\partial_{x_{1}}V_{1}=0, & \text{ in } & \Omega\label{eq:cauchy_riemann_aug_V_eq_2}\\
 & V_{1}=0, & \text{ on } & \Gamma_{1}\cup\Gamma_{3}\label{eq:cauchy_riemann_aug_V_bc_13}\\
 & V_{2}=0, & \text{ on } & \Gamma_{2}\cup\Gamma_{4}\label{eq:cauchy_riemann_aug_V_bc_24}
\end{align}
while $\left(W_{1},W_{2}\right)^{\top}$ solves the following problem
\begin{align}
 & \partial_{x_{2}}W_{1}+\partial_{x_{1}}W_{2}=0, & \text{ in } & \Omega\label{eq:cauchy_riemann_aug_W_eq_1}\\
 & \partial_{x_{2}}W_{2}-\partial_{x_{1}}W_{1}=f_{2}\left(x\right), & \text{ in } & \Omega\label{eq:cauchy_riemann_aug_W_eq_2}\\
 & W_{1}=g_{1}\left(x_{2}\right), & \text{ on } & \Gamma_{1}\label{eq:cauchy_riemann_aug_W_bc_1}\\
 & W_{2}=g_{2}\left(x_{1}\right), & \text{ on } & \Gamma_{2}\label{eq:cauchy_riemann_aug_W_bc_2}\\
 & W_{1}=g_{3}\left(x_{2}\right), & \text{ on } & \Gamma_{3}\label{eq:cauchy_riemann_aug_W_bc_3}\\
 & W_{2}=g_{4}\left(x_{1}\right). & \text{ on } & \Gamma_{4}\label{eq:cauchy_riemann_aug_W_bc_4}
\end{align}

By \eqref{eq:cauchy_riemann_aug_V_eq_2}, there exists a function
$\Phi\left(x\right)$ such that $\nabla\Phi\defs\left(\partial_{x_{1}}\Phi,\partial_{x_{2}}\Phi\right)^{\top}=\left(V_{2},V_{1}\right)^{\top}$.
Then the problem \eqref{eq:cauchy_riemann_aug_V_eq_1}—\eqref{eq:cauchy_riemann_aug_V_bc_24}
for $\left(V_{1},V_{2}\right)^{\top}$ is rewritten as
\begin{align}
 & \triangle\Phi=f_{1}\left(x\right), & \text{ in } & \Omega\label{eq:cauchy_riemann_aug_V_eq_1-1}\\
 & \partial_{x_{2}}\Phi=0, & \text{ on } & \Gamma_{1}\cup\Gamma_{3}\label{eq:cauchy_riemann_aug_V_bc_13-1}\\
 & \partial_{x_{1}}\Phi=0. & \text{ on } & \Gamma_{2}\cup\Gamma_{4}\label{eq:cauchy_riemann_aug_V_bc_24-1}
\end{align}
Since the derivatives in the boundary conditions \eqref{eq:cauchy_riemann_aug_V_bc_13-1}
and \eqref{eq:cauchy_riemann_aug_V_bc_24-1} are exactly tangential
derivatives, they can be reformulated as Dirichlet boundary conditions
\begin{equation}
\Phi=0,\qquad\text{on }\partial\Omega\label{eq:cauchy_riemann_aug_V_bc_D}
\end{equation}
if we impose an additional condition $\Phi(0,0)=0$. Then by Corollary
4.4.3.8 ( or Theorem 4.4.3.7, P233-234) in the book \cite[P233-234]{Grisvard1985}
by P. Grisvard, we obtain that there exists a unique solution $\Phi\in W_{q}^{2}\left(\Omega\right)$
to the boundary value problem \eqref{eq:cauchy_riemann_aug_V_eq_1-1}
and \eqref{eq:cauchy_riemann_aug_V_bc_D}, and there exists a constant
$C$ independent of $\Phi$, such that
\begin{equation}
\norm{\Phi}_{W_{q}^{2}\left(\Omega\right)}\leq C\norm{f_{1}}_{L^{q}\left(\Omega\right)}.\label{eq:estimate_phi}
\end{equation}
Hence, there exists a unique solution$\left(V_{1},V_{2}\right)^{\top}$
to the problem \eqref{eq:cauchy_riemann_aug_V_eq_1}—\eqref{eq:cauchy_riemann_aug_V_bc_24},
and the following estimate holds:
\begin{equation}
\sum_{j=1}^{2}\norm{V_{j}}_{W_{q}^{1}\left(\Omega\right)}\leq C\norm{f_{1}}_{L^{q}\left(\Omega\right)}.\label{eq:cauchy_riemann_estimate_aug_V}
\end{equation}

Analogously, by \eqref{eq:cauchy_riemann_aug_W_eq_1}, there exists
a function $\Psi\left(x\right)$ such that $\nabla\Psi\defs\left(\partial_{x_{1}}\Psi,\partial_{x_{2}}\Psi\right)^{\top}=\left(-W_{1},W_{2}\right)^{\top}$.
Thus, the problem \eqref{eq:cauchy_riemann_aug_W_eq_1}—\eqref{eq:cauchy_riemann_aug_W_bc_4}
for $\left(W_{1},W_{2}\right)^{\top}$ is rewritten as 
\begin{align}
 & \Delta\Psi=f_{2}\left(x\right), & \text{ in } & \Omega\label{eq:cauchy_riemann_aug_W_eq}\\
 & -\partial_{x_{1}}\Psi=g_{1}\left(x_{2}\right), & \text{ on } & \Gamma_{1}\label{eq:cauchy_riemann_aug_W_bc_1-1}\\
 & \partial_{x_{2}}\Psi=g_{2}\left(x_{1}\right), & \text{ on } & \Gamma_{2}\label{eq:cauchy_riemann_aug_W_bc_2-1}\\
 & -\partial_{x_{1}}\Psi=g_{3}\left(x_{2}\right), & \text{ on } & \Gamma_{3}\label{eq:cauchy_riemann_aug_W_bc_3-1}\\
 & \partial_{x_{2}}\Psi=g_{4}\left(x_{1}\right). & \text{ on } & \Gamma_{4}\label{eq:cauchy_riemann_aug_W_bc_4-1}
\end{align}
Then by Corollary 4.4.3.8 in the book \cite[P233-234]{Grisvard1985},
there exists a unique solution $\Psi\in W_{q}^{2}\left(\Omega\right)$,
up to an additive constant, to the problem \eqref{eq:cauchy_riemann_aug_W_eq_1}—\eqref{eq:cauchy_riemann_aug_W_bc_4},
provided that the solvability condition \eqref{eq:cauchy_riemann_exist_cond}
holds. Furthermore, because it is the first derivatives of $\Psi$
that are really needed, the additive constant is not important and
can be chosen such that $\int_{\Omega}\Psi=0$. Therefore, 
by Poincare inequality, there exists a constant $C_{\left(\Omega\right)}$
such that
\begin{equation}
\norm{\Psi}_{L^{2}\left(\Omega\right)}\leq C_{\left(\Omega\right)}\norm{\nabla\Psi}_{L^{2}\left(\Omega\right)}.\label{eq:PoincareInequality}
\end{equation}
Then by standard energy estimates, that is, multiplying
$\Psi$ on both sides of the equation \eqref{eq:cauchy_riemann_aug_W_eq},
integrate over $\Omega$ and then employing the formula of integration
by parts, one can obtain
\[
\norm{\nabla\Psi}_{L^{2}\left(\Omega\right)}^{2}\leq\norm{f_{2}}_{L^{2}}\cdot\norm{\Psi}_{L^{2}\left(\Omega\right)}+C_{\left(q,\Omega\right)}\max\set{\norm{g_{j}}_{L^{\infty}\left(\Gamma_{j}\right)}\left(j=1,2,3,4\right)}\cdot\norm{\Psi}_{L^{2}\left(\partial\Omega\right)}.
\]
Applying the inequality \eqref{eq:PoincareInequality} yields
immediately that
\begin{equation}
\norm{\Psi}_{H^{1}\left(\Omega\right)}\leq C\norm{\nabla\Psi}_{L^{2}\left(\Omega\right)}\leq C\cdot F_{\Psi},\label{eq:estimate_psi_H1}
\end{equation}
where 
\[
F_{\Psi}\defs\norm{f_{2}}_{L^{q}\left(\Omega\right)}+\sum_{j=1}^{4}\norm{g_{j}}_{W_{q}^{1-\frac{1}{q}}\left(\Gamma_{j}\right)}.
\]
Thus the embedding theorem yields
\[
\norm{\Psi}_{L^{q}\left(\Omega\right)}\leq C\cdot F_{\Psi}.
\]
Then by trace theorems, Theorem 4.3.2.4 and Remark 4.3.2.5 in the
book \cite{Grisvard1985}, one gets that
\begin{equation}
\norm{\Psi}_{W_{q}^{2}\left(\Omega\right)}\leq C\cdot F_{\Psi}.\label{eq:estimate_psi}
\end{equation}
Hence, there exists a unique solution$\left(W_{1},W_{2}\right)^{\top}$
to the problem \eqref{eq:cauchy_riemann_aug_W_eq_1}-\eqref{eq:cauchy_riemann_aug_W_bc_4} satisfying the estimate:
\begin{equation}
\sum_{j=1}^{2}\norm{W_{j}}_{W_{q}^{1}\left(\Omega\right)}\leq C\cdot F_{\Psi}.\label{eq:cauchy_riemann_estimate_aug_W}
\end{equation}

Hence the Lemma holds.
\end{proof}

\subsection{Elliptic systems of first order with constant coefficients}

The above results can be easily extended to general elliptic systems
of first order with constant coefficients. 

Consider the following boundary value problem on $\Omega$:
\begin{align}
 & \partial_{x_{2}}u_{1}+a_{1}\partial_{x_{1}}u_{2}=f_{1}\left(x\right), & \text{ in } & \Omega\label{eq:first_order_elliptic_eq_1}\\
 & \partial_{x_{2}}u_{2}-a_{2}\partial_{x_{1}}u_{1}=f_{2}\left(x\right), & \text{ in } & \Omega\label{eq:first_order_elliptic_eq_2}\\
 & u_{1}=g_{1}\left(x_{2}\right), & \text{ on } & \Gamma_{1}\label{eq:first_order_elliptic_bc_1}\\
 & u_{2}=g_{2}\left(x_{1}\right), & \text{ on } & \Gamma_{2}\label{eq:first_order_elliptic_bc_2}\\
 & u_{1}=g_{3}\left(x_{2}\right), & \text{ on } & \Gamma_{3}\label{eq:first_order_elliptic_bc_3}\\
 & u_{2}=g_{4}\left(x_{1}\right), & \text{ on } & \Gamma_{4}\label{eq:first_order_elliptic_bc_4}
\end{align}
where $a_{1}$ and $a_{2}$ are two constants satisfying $a_{1}a_{2}>0$.

Indeed, under the following transformation of the coordinates and
the unknown functions:
\begin{align}
\left(\ft\right): &  & \begin{cases}
y_{1} & =\displaystyle\frac{1}{\sqrt{a_{1}a_{2}}}x_{1},\\
y_{2} & =x_{2},
\end{cases} &  &  & \begin{cases}
v_{1} & =\sqrt{\displaystyle\frac{a_{2}}{a_{1}}}u_{1},\\
v_{2} & =u_{2},
\end{cases}
\label{eq:transform_elliptic_sys}
\end{align}
the domain $\Omega$ becomes
\[
\Omega^{*}:=\left\{ y=\left(y_{1},y_{2}\right)\in\Real^{2}|\ 0<y_{1}<\ell_{1}^{*},\ 0<y_{2}<\ell_{2}^{*}\right\} ,
\]
where $\ell_{1}^{*}=\displaystyle\frac{1}{\sqrt{a_{1}a_{2}}}\ell_{1}$ and $\ell_{2}^{*}=\ell_{2}$,
and its boundaries $\Gamma_{j}\left(j=1,2,3,4\right)$ become
\begin{eqnarray*}
\Gamma_{1}^{*} & = & \left\{ y_{1}=0,\ 0<y_{2}<\ell_{2}^{*}\right\} ,\\
\Gamma_{2}^{*} & = & \left\{ 0<y_{1}<\ell_{1}^{*},\ y_{2}=0\right\} ,\\
\Gamma_{3}^{*} & = & \left\{ y_{1}=\ell_{1}^{*},\ 0<y_{2}<\ell_{2}^{*}\right\} ,\\
\Gamma_{4}^{*} & = & \left\{ 0<y_{1}<\ell_{1}^{*},\ y_{2}=\ell_{2}^{*}\right\} .
\end{eqnarray*}
Therefore, the above boundary value problem is reformulated as
\begin{align}
 & \partial_{y_{2}}v_{1}+\partial_{y_{1}}v_{2}=\sqrt{\frac{a_{2}}{a_{1}}}f_{1}\left(\sqrt{a_{1}a_{2}}y_{1},y_{2}\right):=f_{1}^{*}\left(y\right), & \text{ in } & \Omega^{*}\label{eq:first_order_elliptic_eq_1-T}\\
 & \partial_{y_{2}}v_{2}-\partial_{y_{1}}v_{1}=f_{2}\left(\sqrt{a_{1}a_{2}}y_{1},y_{2}\right):=f_{2}^{*}\left(y\right), & \text{ in } & \Omega^{*}\label{eq:first_order_elliptic_eq_2-T}\\
 & v_{1}=\sqrt{\frac{a_{2}}{a_{1}}}g_{1}\left(y_{2}\right):=g_{1}^{*}\left(y_{2}\right), & \text{ on } & \Gamma_{1}^{*}\label{eq:first_order_elliptic_bc_1-T}\\
 & v_{2}=g_{2}\left(\sqrt{a_{1}a_{2}}y_{1}\right):=g_{2}^{*}\left(y_{1}\right), & \text{ on } & \Gamma_{2}^{*}\label{eq:first_order_elliptic_bc_2-T}\\
 & v_{1}=\sqrt{\frac{a_{2}}{a_{1}}}g_{3}\left(y_{2}\right):=g_{3}^{*}\left(y_{2}\right), & \text{ on } & \Gamma_{3}^{*}\label{eq:first_order_elliptic_bc_3-T}\\
 & v_{2}=g_{4}\left(\sqrt{a_{1}a_{2}}y_{1}\right):=g_{4}^{*}\left(y_{1}\right). & \text{ on } & \Gamma_{4}^{*}\label{eq:first_order_elliptic_bc_4-T}
\end{align}
Thus one can apply Lemma \ref{lem:cauchy_riemann} to obtain the following
consequence.
\begin{lem}
\label{lem:bvp_first_order_elliptic}

Let $q>2$. Assume that $f_{j}\in L^{q}\left(\Omega\right)\left(j=1,2\right)$,
and $g_{j}\in W_{q}^{1-1/q}\left(\Gamma_{j}\right)$, $\left(j=1,2,3,4\right)$.
Then there exists a unique solution $u\defs\left(u_{1},u_{2}\right)^{\top}\in\left(W_{q}^{1}\left(\Omega\right)\right)^{2}$
to the boundary value problem \eqref{eq:first_order_elliptic_eq_1}—\eqref{eq:first_order_elliptic_bc_4},
provided that
\begin{equation}
\int_{\Omega}f_{2}\left(x\right)\dif x=\int_{0}^{\ell_{1}}\left(g_{4}\left(x_{1}\right)-g_{2}\left(x_{1}\right)\right)\dif x_{1}+a_{2}\int_{0}^{\ell_{2}}\left(g_{1}\left(x_{2}\right)-g_{3}\left(x_{2}\right)\right)\dif x_{2}.\label{eq:first_order_elliptic_exist_cond}
\end{equation}
Furthermore, the solution satisfies
\begin{equation}
\sum_{j=1}^{2}\norm{u_{j}}_{W_{q}^{1}\left(\Omega\right)}\leq C\left(\sum_{j=1}^{2}\norm{f_{j}}_{L^{q}\left(\Omega\right)}+\sum_{j=1}^{4}\norm{g_{j}}_{W_{q}^{1-1/q}\left(\Gamma_{j}\right)}\right),\label{eq:first_order_elliptic_estimate_u}
\end{equation}
where the constant $C$ is independent of the $u_{j}$, but depends
on $q$ and $\Omega$.
\end{lem}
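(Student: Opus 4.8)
The plan is to reduce the boundary value problem \eqref{eq:first_order_elliptic_eq_1}—\eqref{eq:first_order_elliptic_bc_4} to the Cauchy--Riemann case already settled in Lemma \ref{lem:cauchy_riemann}, by means of the change of variables and unknowns $(\ft)$ recorded in \eqref{eq:transform_elliptic_sys}. First I would note that, since $a_{1}a_{2}>0$, the symbol of the system has eigenvalues $\pm\mi\sqrt{a_{1}a_{2}}$, so the system is genuinely elliptic, and the quantities $\sqrt{a_{1}a_{2}}$ and $\sqrt{a_{2}/a_{1}}$ entering $(\ft)$ are well defined; $(\ft)$ is an affine diffeomorphism with constant Jacobian $\dif y_{1}\dif y_{2}=(a_{1}a_{2})^{-1/2}\,\dif x_{1}\dif x_{2}$ carrying the rectangle $\Omega$ onto $\Omega^{*}$ and each side $\Gamma_{j}$ onto $\Gamma_{j}^{*}$. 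A direct substitution into \eqref{eq:first_order_elliptic_eq_1}—\eqref{eq:first_order_elliptic_eq_2} then turns the system into \eqref{eq:first_order_elliptic_eq_1-T}—\eqref{eq:first_order_elliptic_eq_2-T}, which is exactly the Cauchy--Riemann system \eqref{eq:cauchy_riemann_eq_1}—\eqref{eq:cauchy_riemann_eq_2} for $v=(v_{1},v_{2})^{\top}$ on $\Omega^{*}$, together with the boundary data \eqref{eq:first_order_elliptic_bc_1-T}—\eqref{eq:first_order_elliptic_bc_4-T}. Because $(\ft)$ is linear with constant coefficients, the transformed data satisfy $f_{j}^{*}\in L^{q}(\Omega^{*})$ and $g_{j}^{*}\in W_{q}^{1-1/q}(\Gamma_{j}^{*})$ whenever the original data lie in the corresponding spaces, with norms comparable up to factors depending only on $a_{1},a_{2},\Omega$.

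Next I would apply Lemma \ref{lem:cauchy_riemann} on $\Omega^{*}$ to the problem \eqref{eq:first_order_elliptic_eq_1-T}—\eqref{eq:first_order_elliptic_bc_4-T}, obtaining a unique $v\in(W_{q}^{1}(\Omega^{*}))^{2}$ precisely under the solvability condition \eqref{eq:cauchy_riemann_exist_cond} written for the starred data. Undoing the substitutions — using $\dif y_{1}\dif y_{2}=(a_{1}a_{2})^{-1/2}\dif x_{1}\dif x_{2}$ in the area integral, $\dif y_{1}=(a_{1}a_{2})^{-1/2}\dif x_{1}$ in the $\Gamma_{2}^{*},\Gamma_{4}^{*}$ integrals, and $g_{1}^{*}-g_{3}^{*}=\sqrt{a_{2}/a_{1}}\,(g_{1}-g_{3})$ on $\Gamma_{1}^{*},\Gamma_{3}^{*}$ — and then multiplying through by $\sqrt{a_{1}a_{2}}$, this condition becomes exactly \eqref{eq:first_order_elliptic_exist_cond}: the asymmetric coefficient $a_{2}$ in front of the $\Gamma_{1},\Gamma_{3}$ term is the product of the Jacobian factor $\sqrt{a_{1}a_{2}}$ with the scaling factor $\sqrt{a_{2}/a_{1}}$ of the unknown $v_{1}$. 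Finally, setting $(u_{1},u_{2})^{\top}=(\sqrt{a_{1}/a_{2}}\,v_{1},v_{2})^{\top}$ and composing with the inverse of $(\ft)$ produces the solution $u\in(W_{q}^{1}(\Omega))^{2}$ of the original problem, which is unique because $(\ft)$ is a bijection between the respective solution spaces; the estimate \eqref{eq:cauchy_riemann_estimate_v} for $v$ transforms into \eqref{eq:first_order_elliptic_estimate_u}, the new constant absorbing the fixed powers of $a_{1},a_{2}$ introduced by $(\ft)$ and by the change of variables in the norms.

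There is no genuine analytic difficulty here once Lemma \ref{lem:cauchy_riemann} is available; the only point requiring care is the bookkeeping of the Jacobian and scaling factors of $(\ft)$ in the solvability condition and in the norm equivalences, so that the coefficient $a_{2}$ in \eqref{eq:first_order_elliptic_exist_cond} comes out correctly and with the right placement. I would therefore write the proof compactly: indicate the substitution and the transformed problem, cite Lemma \ref{lem:cauchy_riemann}, and spell out the one-line computation matching the two solvability conditions and the norm estimates.
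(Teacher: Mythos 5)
Your proposal is correct and follows essentially the same route as the paper: apply the affine change of variables and unknowns $(\ft)$ from \eqref{eq:transform_elliptic_sys} to reduce to the Cauchy--Riemann problem of Lemma \ref{lem:cauchy_riemann} on $\Omega^{*}$, then transport the solvability condition and the $W^{1}_{q}$ estimate back, with the coefficient $a_{2}$ in \eqref{eq:first_order_elliptic_exist_cond} arising precisely as $\sqrt{a_{1}a_{2}}\cdot\sqrt{a_{2}/a_{1}}$. The paper's own proof confines itself to the verification that \eqref{eq:cauchy_riemann_exist_cond} for the starred data is equivalent to \eqref{eq:first_order_elliptic_exist_cond}, and your computation of that equivalence matches it exactly.
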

\begin{proof}
It suffices to show that the solvability condition \eqref{eq:cauchy_riemann_exist_cond}
for the problem \eqref{eq:first_order_elliptic_eq_1-T}—\eqref{eq:first_order_elliptic_bc_4-T}
yields the condition \eqref{eq:first_order_elliptic_exist_cond} for
the problem \eqref{eq:first_order_elliptic_eq_1}—\eqref{eq:first_order_elliptic_bc_4}.

The condition \eqref{eq:cauchy_riemann_exist_cond} for the problem
\eqref{eq:first_order_elliptic_eq_1-T}—\eqref{eq:first_order_elliptic_bc_4-T}
reads
\[
\int_{\Omega^{*}}f_{2}^{*}\left(y\right)\dif y=\int_{0}^{\ell_{1}^{*}}\left(g_{4}^{*}\left(y_{1}\right)-g_{2}^{*}\left(y_{1}\right)\right)\dif y_{1}+\int_{0}^{\ell_{2}^{*}}\left(g_{1}^{*}\left(y_{2}\right)-g_{3}^{*}\left(y_{2}\right)\right)\dif y_{2},
\]
that is,
\begin{align*}
 & \int_{\Omega^{*}}f_{2}\left(\sqrt{a_{1}a_{2}}y_{1},y_{2}\right)\dif y\\
= & \int_{0}^{\ell_{1}^{*}}\left(g_{4}\left(\sqrt{a_{1}a_{2}}y_{1}\right)-g_{2}\left(\sqrt{a_{1}a_{2}}y_{1}\right)\right)\dif y_{1}+\sqrt{\frac{a_{2}}{a_{1}}}\int_{0}^{\ell_{2}^{*}}\left(g_{1}\left(y_{2}\right)-g_{3}\left(y_{2}\right)\right)\dif y_{2}.
\end{align*}
Then the transformation in \eqref{eq:transform_elliptic_sys} yields
\begin{align*}
 & \frac{1}{\sqrt{a_{1}a_{2}}}\int_{\Omega}f_{2}\left(x\right)\dif x\\
= & \frac{1}{\sqrt{a_{1}a_{2}}}\int_{0}^{\ell_{1}}\left(g_{4}\left(x_{1}\right)-g_{2}\left(x_{1}\right)\right)\dif x_{1}+\sqrt{\frac{a_{2}}{a_{1}}}\int_{0}^{\ell_{2}}\left(g_{1}\left(x_{2}\right)-g_{3}\left(x_{2}\right)\right)\dif x_{2},
\end{align*}
which is exactly the solvability condition \eqref{eq:first_order_elliptic_exist_cond}.
\end{proof}

\section*{Acknowlegements}

The research of Beixiang Fang is partially supported by Natural Science
Foundation of China under Grant Nos. 11631008, and 11371250,
the Shanghai Committee of Science and Technology (Grant No. 15XD1502300),
and Shanghai Jiao Tong University's Chenxing SMC-B Project. The research
of Zhouping Xin is partially supported by Hong Kong RGC Earmarked Research Grants CUHK-14300917, CUHK-14305315, CUHK-14302917, and Zheng-Ge Zu Foundation.

\end{document}